\lstdefinelanguage{Julia}%
  {morekeywords={abstract,break,catch,const,continue,do,else,elseif,%
      end,export,false,for,function,immutable,import,importall,if,in,%
      macro,module,otherwise,quote,return,switch,true,try,type,typealias,%
      using,while},%
   sensitive=true,%
   alsoother={$},%
   morecomment=[l]\#,%
   morecomment=[n]{\#=}{=\#},%
   morestring=[s]{"}{"},%
   morestring=[m]{'}{'},%
}[keywords,comments,strings]%
\newtheorem{lemma}{Lemma}[section]
\newtheorem{theorem}[lemma]{Theorem}
\newtheorem{example}[lemma]{Example}
\newtheorem{setting}[lemma]{Setting}
\theoremstyle{definition}
\newtheorem{listing}[lemma]{Listing}
\renewcommand{\gets}{\curvearrowleft}
\providecommand{\N}{{\ensuremath{\mathbbm{N}}}}
\providecommand{\Z}{{\ensuremath{\mathbbm{Z}}}}
\providecommand{\R}{{\ensuremath{\mathbbm{R}}}}
\renewcommand{\P}{{\ensuremath{\mathbbm{P}}}}
\providecommand{\E}{{\ensuremath{\mathbbm{E}}}}
\providecommand{\1}{{\ensuremath{\mathbbm{1}}}}
\providecommand{\F}{{\ensuremath{\mathbbm{F}}}}
\newcommand{\tnorm}[1]{{\left\vert\kern-0.25ex\left\vert\kern-0.25ex\left\vert #1     \right\vert\kern-0.25ex\right\vert\kern-0.25ex\right\vert}}
\newcommand{\supnorm}[1]{{\left\vert\kern-0.25ex\left\vert\kern-0.25ex\left\vert #1     \right\vert\kern-0.25ex\right\vert\kern-0.25ex\right\vert}}
\newcommand{\rdown}[1]{\lfloor #1\rfloor}
\providecommand{\var}{{\ensuremath{\mathbbm{V}}}}
\author[A. Neufeld]{Ariel Neufeld$^{1}$}
\address{$^1$  Division of Mathematical Sciences, School of Physical and Mathematical Sciences, Nanyang Technological University, Singapore}
\email{ariel.neufeld@ntu.edu.sg}
\author[T.A. Nguyen]{Tuan Anh Nguyen$^{2}$}
\address{$^2$  Faculty of Mathematics, Bielefeld University, Germany}
\email{tnguyen@math.uni-bielefeld.de}
\thanks{Financial support by the Nanyang Assistant Professorship Grant (NAP Grant) \textit{Machine Learning based Algorithms
in Finance and Insurance} is gratefully acknowledged.}
\title[MLP and DNN overcome the CoD when approximating PDEs]{Multilevel Picard approximations and deep neural networks with ReLU, leaky ReLU, and softplus activation  overcome the curse of dimensionality when approximating semilinear parabolic partial differential equations in $L^p$-sense}
\subjclass[2010]{65C99, 68T05}
\begin{document}\begin{abstract}
We prove that 
multilevel Picard approximations and
deep neural networks with ReLU, leaky ReLU, and softplus activation are capable of approximating solutions of semilinear Kolmogorov
PDEs 
in $L^\mathfrak{p}$-sense, $\mathfrak{p}\in [2,\infty)$,
in the case of gradient-independent, Lipschitz-continuous nonlinearities, while the computational effort of the 
multilevel Picard approximations
and
the required
number of parameters in the neural networks grow at most polynomially in both dimension $d\in \N$ and
 reciprocal of the
prescribed  accuracy $\epsilon$. 
\end{abstract}
\maketitle

\vspace{-0.15cm}
\section{Introduction}
Partial differential equations (PDEs) are important tools to analyze many real world phenomena, e.g., in financial engineering, economics, quantum mechanics, or statistical physics to name but a few. 
In most of the cases such high-dimensional nonlinear PDEs cannot be solved explicitly. It is one of
the most challenging problems in applied mathematics to approximately solve high-dimensional nonlinear PDEs.
In particular, it is very difficult to find approximation schemata for nonlinear PDEs for which one can
rigorously prove that they do overcome the so-called \emph{curse of dimensionality} in the sense that the computational complexity only grows polynomially in the space dimension $d$ of the PDE and the reciprocal
$\frac{1}{\varepsilon}$ of the accuracy $\varepsilon$.

In recent years, 
there are two types of approximation methods which are quite
successful in the numerical approximation of solutions of high-dimensional nonlinear  PDEs:  neural network based 
approximation methods for PDEs, cf.,
\cite{al2022extensions,beck2020deep,beck2021deep,beck2019machine,
beck2020overview,berner2020numerically,castro2022deep,CHW2022,
ew2017deep,ew2018deep,weinan2021algorithms,frey2022convergence,frey2022deep,GPW2022,
gnoatto2022deep,gonon2023random,gonon2021deep,gonon2023deep,GHJVW2023,
han2018solving,han2020convergence, 
han2019solving,hure2020deep,hutzenthaler2020proof,ito2021neural,
jacquier2023deep,jacquier2023random,JSW2021,
lu2021deepxde,nguwi2022deep,nguwi2022numerical,nguwi2023deep,
raissi2019physics,reisinger2020rectified,sirignano2018dgm,zhang2020learning,han2018solving,NSW2024}   
and multilevel Monte-Carlo based approximation methods for PDEs, cf.,
\cite{beck2020overcomingElliptic,beck2020overcoming,becker2020numerical,
hutzenthaler2019multilevel,hutzenthaler2021multilevel,
giles2019generalised,HJK2022,HJKNW2020,
HJKN2020,
HJvW2020,HK2020,hutzenthaler2022multilevel,
HN2022a,NW2022,NNW2023,NW2023,HJKP2021}.

For multilevel Monte-Carlo based algorithms it is often possible to provide a complete convergence and complexity analysis.
It has been proven that under some suitable assumptions, e.g., Lipschitz continuity
on the linear part, the nonlinear part, and the initial (or terminal) condition function of the PDE under consideration, 
the multilevel Picard approximation algorithms can overcome the curse of
dimensionality in the sense that the number of computational operations of
the proposed Monte-Carlo based approximation method grows at most polynomially in both the reciprocal
$\frac{1}{\varepsilon}$ of the prescribed approximation accuracy $\varepsilon\in (0,1)$ and the PDE dimension $d\in \N$.
More precisely, \cite{hutzenthaler2021multilevel} considers smooth semilinear parabolic heat equations. 
Later, \cite{HJKNW2020} extends \cite{hutzenthaler2021multilevel} to a more general setting, namely, semilinear heat equations which are not necessary smooth. \cite{beck2020overcoming} considers semilinear heat equation with more general nonlinearities, namely locally Lipschitz nonlinearities.
 \cite{HK2020,HJK2022} considers semilinear heat equations with gradient-dependent Lipschitz nonlinearities and \cite{NNW2023,NW2023} extends them to semilinear PDEs with general drift and diffusion coefficients.
\cite{HJvW2020} studies Black-Scholes-types semilinear PDEs.
\cite{HJKN2020} consider semilinear parabolic PDEs with nonconstant drift and diffusion coefficients.
\cite{HN2022a} considers a slightly more general setting than \cite{HJKN2020}, namely semilinear PDEs with locally monotone coefficient functions. 
\cite{HN2022b} introduced a schema with applications to forward backward stochastic differential equations under the assumption that 
both $\mu$ and $\sigma$ are $C^2$.
\cite{NW2022} studies semilinear partial integro-differential equations.
\cite{hutzenthaler2022multilevel} considers McKean-Vlasov stochastic differential equations (SDEs) with constant diffusion coefficients. \cite{beck2020overcomingElliptic} studies a special type of elliptic equations.
Almost all the works listed above prove $L^2$-error estimates except
 \cite{HJKP2021,HN2022b}, which draw their attention to $L^\mathfrak{p}$-error estimates, $\mathfrak{p}\in [2,\infty)$.

Numerical experiments indicate that 
deep learning methods work exceptionally well when approximating solutions of high-dimensional PDEs and that they do not suffer from the curse of dimensionality.
However, there exist only few theoretical results proving that deep learning based approximations of solutions of PDEs indeed do not suffer from the curse of dimensionality.
More precisely, 
\cite{BGJ2020}
 shows that empirical risk minimization  over deep neural network (DNN) hypothesis classes overcomes the curse
of dimensionality for the numerical solution of linear Kolmogorov equations with affine coefficients.
Next,
\cite{EGJS2022} considers the pricing problem  of a European
best-of-call option on a basket of $d$ assets within the Black–Scholes model  and  proves that the solution to the $d$-variate option pricing
problem can be approximated up to an $\epsilon$-error by a deep ReLU network with depth
$\mathcal{O}(\ln(d)\ln(\epsilon^{-1}) + (\ln(d))^2)$
and $\mathcal{O}(d^{2+\frac{1}{n}}\epsilon^{-\frac{1}{n}})$
nonzero weights, where $n\in \N$ is arbitrary (with the constant implied in $\mathcal{O}(\cdot)$ depending on $n$).
Furthermore, \cite{gonon2023random}
investigates the use of random  neural networks for learning Kolmogorov partial integro-differential equations (PIDEs) associated to Black-Scholes and more general exponential Lévy models. Here, random  neural networks are single-hidden-layer feedforward neural networks in which the input weights are randomly generated and only the output weights are trained.
In addition, \cite{NN2025} proves that
rectified deep neural networks overcome the curse of dimensionality when approximating solutions of McKean--Vlasov stochastic differential equations.
Moreover, \cite{gonon2021deep} studies the expression rates of DNNs  for option
prices written on baskets of $d$ risky assets whose log-returns are modelled by a multivariate L\'evy process with general correlation structure of jumps. 
Note that the PIDEs studied by \cite{gonon2021deep} are also Black-Scholes-type PIDEs (see \cite[Display~(2.3)]{gonon2021deep}). 
Next, 
\cite{gonon2023deep} proves that DNNs with ReLU activation function are able to
express viscosity solutions of Kolmogorov linear PIDEs on state spaces of
possibly high dimension $d$. Furthermore,
\cite{GHJVW2023} proves that DNNs overcome the curse of dimensionality when approximating the solutions to Black-Scholes PDEs and \cite{JSW2021} proves
that DNNs overcome the curse
of dimensionality in the numerical approximation of linear Kolmogorov PDEs with constant
diffusion and nonlinear drift coefficients. 
In addition, \cite{NS2023} proves that the  solution of the linear heat equation can be approximated by a random neural network whose amount of neurons only grow polynomially in the space dimension of the PDE and the reciprocal of the accuracy, hence overcoming the curse of dimensionality when approximating such an equation.
Moreover,
\cite{hutzenthaler2020proof}
proves that DNNs overcome the curse of dimensionality 
in the numerical approximation of semilinear heat equations and \cite{AJK+2023} extends \cite{hutzenthaler2020proof} to estimates with respect to $L^\mathfrak{p}$-norms, $\mathfrak{p}\in [2,\infty)$, when approximating the semilinear heat equation. Furthermore, \cite{AJKP2024} demonstrates  space-time $L^\mathfrak{p}$-error estimates, $\mathfrak{p}\in [2,\infty)$,
when approximating the semilinear heat equation.
Next, \cite{CHW2022} extends \cite{hutzenthaler2020proof} to semilinear PDEs with general drift and diffusion coefficients
and \cite{NNW2023} extends \cite{hutzenthaler2020proof} to semilinear PIDEs. Note that except \cite{AJK+2023,AJKP2024} all the  works mentioned in this paragraph  establish $L^2$-error estimates, but not $L^\mathfrak{p}$-estimates for general $\mathfrak{p}\in [2,\infty)$.
\\

\noindent
The main novelty of our paper is the following:
\vspace{-0.15cm}
\begin{enumerate}[(A)]
\item We extend the $L^2$-complexity analysis in \cite{HJKN2020} to an $L^\mathfrak{p}$-complexity analysis, $\mathfrak{p}\in [2,\infty)$. More precisely, in our first main result, \cref{c40} below, we prove that the MLP algorithms introduced by \cite{HJKN2020} overcome the curse of dimensionality when approximating semilinear parabolic 
PDEs in $L^\mathfrak{p}$-sense, $\mathfrak{p}\in [2,\infty)$.
\item We extend the result by \cite{CHW2022} from $L^\mathfrak{2}$  to an $L^\mathfrak{p}$-sense, $\mathfrak{p}\in [2,\infty)$, and from DNNs with ReLU activation to DNNs with more activation functions including now DNNs with ReLU, leaky ReLU, or softplus activation, see \cref{a08b} below, which is our second main result. 
More precisely, we 
show that for every $\mathfrak{p}\in [2,\infty)$ we
have that solutions of semilinear PDEs with Lipschitz continuous nonlinearities can be
approximated in the $L^\mathfrak{p}$-sense by DNNs with ReLU, leaky ReLU, or softplus activation
without the curse of dimensionality.
\end{enumerate}

\subsection{Notations}

Throughout this paper we use the following notations. 
Let 
$\R$ denote the set of all real numbers. Let
$\Z, \N_0, \N $ denote the sets which satisfy that $\Z=\{\ldots,-2,-1,0,1,2,\ldots\}$, $\N=\{1,2,\ldots\}$, $\N_0=\N\cup\{0\}$. Let $\nabla$ denote the gradient  and $\operatorname{Hess}$ denote the Hessian matrix. For every matrix $A$ let $A^\top$ denote the transpose of $A$ and let $\mathrm{trace}(A)$ denote the trace of $A$ when $A$ is a square matrix. 
For every probability space $(\Omega,\mathcal{F},\P)$, every random variable
$X\colon \Omega\to \R$, and every $s\in [1,\infty)$ let 
$\lVert X\rVert_s \in [0,\infty]$ satisfy that 
$\lVert X\rVert_s=(\E[\lvert X\rvert^s])^{\frac{1}{s}}$.
For every $d\in \N$ let $\lVert\cdot \rVert,\supnorm{\cdot}\colon \R^d\to [0,\infty)$ satisfy for all $x=(x_i)_{i\in [1,d]\cap\Z}\in \R^d$ that
$\lVert x\rVert=\sqrt{\sum_{i=1}^{d}\lvert x_i\rvert^2}$ and
$\supnorm{x}=\sup_{i\in [1,d]\cap\Z}\lvert x_i\rvert$.
For every $d\in \N$ let $\langle\cdot,\cdot\rangle\colon 
\R^d\times\R^d\to \R$
satisfy for all $x=(x_i)_{i\in [1,d]\cap\Z}$,
$y=(y_i)_{i\in [1,d]\cap\Z}$ that $\langle x,y\rangle=\sum_{i=1}^d x_iy_i$.
For every $d\in \N$ let $\lVert\cdot \rVert\colon \R^{d\times d}\to [0,\infty)$ satisfy for all
$a=(a_{ij})_{i,j\in [1,d]\cap \Z}\in \R^{d\times d}$ that
$\lVert a\rVert=\sqrt{\sum_{i=1}^{d}\sum_{i=1}^{d}\lvert a_{ij}\rvert^2  }$.
 When applying a result we often use a phrase like ``Lemma 3.8 with $d\gets (d-1)$''
that should be read as ``Lemma 3.8 applied with $d$ (in the notation of Lemma 3.8) replaced
by $(d-1 )$ (in the current notation)''.

\subsection{MLP approximations overcome the curse of dimensionality when approximating semilinear parabolic PDEs in $L^\mathfrak{p}$-sense}
\begin{theorem}\label{c40}
Let $T,\mathbf{k}\in (0,\infty)$, $\mathfrak{p}\in [2,\infty)$, $c\in [\mathfrak{p}^2,\infty)$.
Let $M\colon \N\to \N$ satisfy for all $n\in\N $ that
$M_n=\max \{k\in \N\colon k\leq\exp (\lvert\ln(n)\rvert^{1/2})\}$.
 For every $d\in \N$ let $g^d\in C(\R^d,\R)$, $f\in C(\R,\R)$, $\mu^d\in C(\R^d,\R^d)$, $\sigma^d\in C(\R^d,\R^{d\times d})$. Assume for all
$x,y\in \R^d$, $v,w\in \R^d$ that
\begin{align}
\max \{
\lvert Tf(0)\rvert, \lvert g^d(0)\rvert, \lVert\mu^d(0)\rVert, \lVert\sigma^d(0)\rVert
\}\leq cd^c,\quad \lvert g(x)\rvert\leq c(d^c+\lVert x\rVert^2)^\frac{1}{2},\label{t23}
\end{align}
\begin{align}
\max \{\sqrt{T}\lvert g^d(x)-g^d(y)\rvert, \lVert\mu^d(x)-\mu^d(y)\rVert,
\lVert\sigma^d(x)-\sigma^d(y)\rVert
\}\leq c\lVert x-y\rVert,\label{t24}
\end{align}
\begin{align}
\lvert
f(w)-f(v)\rvert\leq c\lvert w-v\rvert.\label{t25}
\end{align}
Let $(\Omega,\mathcal{F},\P, (\F_t)_{t\in[0,T]})$ be a filtered probability space which satisfies the usual conditions\footnote{Let $T \in [0,\infty)$ and let ${\bf \Omega} = (\Omega,\mathcal{F},\P, (\F_t)_{t\in[0,T]})$ be 
a filtered probability space. 
Then we say that ${\bf \Omega}$
satisfies the usual conditions if and only if 
it holds that $\{ A\in \mathcal F: \P(A)=0 \} \subseteq \F_0$ and $\forall \, t\in [0,T) \colon \mathbb{F}_t 
= \cap_{ s \in (t,T] } \F_s$.}.
Let 
$  \Theta = \bigcup_{ n \in \N }\! \Z^n$.
Let $\mathfrak{t}^\theta\colon \Omega\to[0,1]$, $\theta\in \Theta$, be identically distributed and independent random variables. Assume for all $t\in(0,1)$ that $\P(\mathfrak{t}^0\leq t)=t$. For every $d\in \N$ let $W^{d,\theta}\colon [0,T]\times\Omega \to \R^{d}$, $\theta\in\Theta$, be independent standard $(\F_{t})_{t\in[0,T]}$-Brownian motions. 
Assume that
$(\mathfrak{t}^\theta)_{\theta\in\Theta}$ and
$(W^{d,\theta})_{d\in \N,\theta\in\Theta}$ are independent. For every $K\in \N$ let
$\rdown{\cdot}_K\colon \R\to \R$ satisfy for all $t\in \R$
that $\rdown{t}_K=\max ( \{0,\frac{T}{K},\ldots,\frac{(K-1)T}{T},T\}\cap ((-\infty,t)\cup\{0\}) )$. For every 
$d,K\in \N$,
$\theta\in \Theta$, $t\in[0,T]$, $x\in \R^d$
let $Y^{d,\theta,K,t,x}= (Y^{d,\theta,K,t,x}_s)_{s\in [t,T]}\colon [t,T]\times \Omega\to \R^d$ satisfy for all $s\in [t,T]$ that
$Y^{d,\theta,K,t,x}_t=x$ and
\begin{align}
Y^{d,\theta,K,t,x}_s=Y^{d,\theta,K,t,x}_{\max\{t,\rdown{s}_K\}}
+\mu^d(Y^{d,\theta,K,t,x}_{\max\{t,\rdown{s}_K\}})
(s-\max \{t,\rdown{s}_K\})
+
\sigma^d(Y^{d,\theta,K,t,x}_{\max\{t,\rdown{s}_K\}})
(W^{d,\theta}_s-W^{d,\theta}_{\max \{t,\rdown{s}_K\}}).\label{c09c}
\end{align}
Let $U^{d,\theta,K}_{n,m}\colon [0,T]\times \R^d\times \Omega\to \R$, $n\in \Z$, $d,K,m\in \N$, $\theta\in \Theta$, satisfy for all
$\theta\in \Theta$, $d,K,m\in \N$, $n\in \N_0$, $t\in [0,T]$, $x\in \R^d$ that
$
{U}_{-1,m}^{d,\theta,K}(t,x)={U}_{0,m}^{d,\theta,K}(t,x)=0$ and
\begin{align} \begin{split} 
\label{t27f}
&  {U}_{n,m}^{d,\theta,K}(t,x)
=  \frac{1}{m^n}\sum_{i=1}^{m^n}
g^d(Y^{d,(\theta,0,-i),K,t,x}_T)\\
&+
\sum_{\ell=0}^{n-1}\frac{T-t}{m^{n-\ell}}
    \sum_{i=1}^{m^{n-\ell}}
     \bigl( f\circ {U}_{\ell,m}^{d,(\theta,\ell,i),K}
-\1_{\N}(\ell)
f\circ {U}_{\ell-1,m}^{d,(\theta,-\ell,i),K}
\bigr)\!
\left(t+(T-t)\mathfrak{t}^{(\theta,\ell,i)},Y_{t+(T-t)\mathfrak{t}^{(\theta,\ell,i)}}^{d,(\theta,\ell,i),K,t,x}\right).\end{split}
\end{align}
Let $(C^{d,K}_{n,m})_{d,K\in \N,n,m\in \Z}\subseteq \N_0$ satisfy for all $d,K\in \N$,
$m,n\in \N$ that
\begin{align}\label{c29}
C_{0,m}^{d,K}=0,\quad 
C_{n,m}^{d,K}\leq (cd^c+cd^cK)
m^n
+ \sum_{\ell=0}^{n-1}m^{n-\ell}\left(2cd^c+cd^cK+
C_{\ell,m}^{d,K}+
C_{\ell-1,m}^{d,K}\right).
\end{align}
Then the following items are true.
\begin{enumerate}[(i)]\item \label{t30}
For every $d\in \N$ there exists a unique at most polynomially growing viscosity solution $u^d$ of 
\begin{align}
\frac{\partial u^d}{\partial t}(t,x)
+\frac{1}{2}\mathrm{trace}(\sigma^d(x)(\sigma^d(x))^\top
(\mathrm{Hess}_x u^d(t,x) ))
+\langle \mu^d(x),(\nabla_xu^d) (t,x)\rangle
+f(u^d(t,x))=0\label{t30b}
\end{align}
with $u^d(T,x)=g^d(x)$ for $(t,x)\in (0,T)\times \R^d$.
\item \label{t31}There exist $(C_\delta)_{\delta\in (0,1)}\subseteq (0,\infty)$, $(n(d,\epsilon))_{d\in \N,\epsilon\in (0,1)}\subseteq \N $ such that for all $d\in \N$, $\epsilon\in (0,1)$ it holds that
\begin{align}
\sup_{t\in [0,T],x\in [0,\mathbf{k}]^d}\left\lVert
{U}_{n(d,\varepsilon),M_{n(d,\varepsilon)}}^{d,0,(M_{n(d,\varepsilon)})^{{n(d,\varepsilon)}}}(t,x)-u^d(t,x)\right\rVert_\mathfrak{p}
\leq \epsilon 
\quad\text{and}\quad  C^{d,(M_{n(d,\epsilon)})^{n(d,\epsilon)}}_{n(d,\epsilon),M_{n(d,\epsilon)}}\leq \eta d^\eta \epsilon^{-(4+\delta)}.
\end{align}
\end{enumerate}

\end{theorem}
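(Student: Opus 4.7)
The plan is to establish (i) by a direct appeal to classical Feynman--Kac theory and to derive (ii) through a two-step error decomposition combined with a calibration of the scheme parameters $(n,m,K)$.

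For (i), since $\mu^d,\sigma^d$ are globally Lipschitz and of linear growth, and since $g^d$ is Lipschitz with polynomial growth while $f$ is globally Lipschitz, standard BSDE/viscosity theory (e.g.\ via Pardoux--Peng combined with the comparison principle) yields a unique viscosity solution $u^d$ in the class of at most polynomially growing continuous functions, together with the probabilistic representation
\begin{equation*}
u^d(t,x) = \E\bigl[g^d(X^{d,t,x}_T)\bigr] + \int_t^T \E\bigl[f(u^d(s,X^{d,t,x}_s))\bigr]\,ds,
\end{equation*}
where $X^{d,t,x}$ is the solution of the SDE driven by $W^{d,0}$ with coefficients $\mu^d,\sigma^d$ started at $x$ at time $t$. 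I would cite an existing result that packages this statement.

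For (ii) the plan is to split
\begin{equation*}
\bigl\lVert U^{d,0,K}_{n,m}(t,x)-u^d(t,x)\bigr\rVert_{\mathfrak p}
\leq \bigl\lVert U^{d,0,K}_{n,m}(t,x)-u^{d,K}(t,x)\bigr\rVert_{\mathfrak p} + \bigl\lvert u^{d,K}(t,x)-u^d(t,x)\bigr\rvert,
\end{equation*}
where $u^{d,K}$ is the unique fixed point of the Picard equation obtained when $X^{d,t,x}$ is replaced by the Euler scheme $Y^{d,0,K,t,x}$ from \eqref{c09c}. The second term is controlled by the standard strong $L^{\mathfrak p}$-error estimate for Euler--Maruyama, $\lVert X^{d,t,x}_s-Y^{d,0,K,t,x}_s\rVert_{\mathfrak p}\leq c_{\mathfrak p}(d^c+\lVert x\rVert^2)^{1/2}K^{-1/2}$, combined with a Gronwall argument through the Picard equation and the Lipschitz property of $f$ and $g^d$, yielding $\sup_{t\in [0,T],\,x\in [0,\mathbf k]^d}\lvert u^{d,K}(t,x)-u^d(t,x)\rvert \leq C_1 d^{C_1} K^{-1/2}$. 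The first term is handled by an induction on $n$ that extends the $L^2$-analysis of \cite{HJKN2020} to $L^{\mathfrak p}$ by replacing Jensen/Itō-isometry arguments with the Burkholder--Davis--Gundy inequality applied to the conditionally independent sums in \eqref{t27f}; this produces an estimate of the form
\begin{equation*}
\sup_{t\in [0,T],\,x\in [0,\mathbf k]^d}\bigl\lVert U^{d,0,K}_{n,m}(t,x)-u^{d,K}(t,x)\bigr\rVert_{\mathfrak p}
\leq \frac{(\kappa_{\mathfrak p}T)^n}{m^{n/2}}\,C_2 d^{C_2},
\end{equation*}
uniformly in $K$, where $\kappa_{\mathfrak p}$ absorbs the BDG constant and the Lipschitz constant $c$.

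To conclude, I would unfold the recursion \eqref{c29} to get $C^{d,K}_{n,m}\leq c_3 d^{c_3} K (3m)^n$, then calibrate $K=m^n$ and $m=M_n$, so that both error components are of order $m^{-n/2}$ up to the subexponential factor $(\kappa_{\mathfrak p}T)^n$. Choosing $n(d,\epsilon)$ as the smallest integer with $m^{n/2}/(\kappa_{\mathfrak p}T)^n\geq C d^{C}\epsilon^{-1}$ forces the error below $\epsilon$; since $M_n=\lfloor e^{\sqrt{\ln n}}\rfloor$ grows superpolynomially in $n$, the factor $(\kappa_{\mathfrak p}T)^n$ can be absorbed at the cost of an arbitrarily small additional exponent $\delta>0$, and a substitution in $C^{d,m^n}_{n,m}\leq c_3 d^{c_3}(3m)^{2n}$ produces the bound $\eta d^\eta\epsilon^{-(4+\delta)}$. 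The main obstacle is the $L^{\mathfrak p}$-MLP recursion: the Lipschitz nonlinearity couples layers of conditionally independent summands, and controlling the $\mathfrak p$-th moments rather than the variance introduces the BDG constants $K_{\mathfrak p}$ at every level; the key technical work is to show that these constants aggregate into a single geometric factor $(\kappa_{\mathfrak p}T)^n$ that the calibration $m=M_n$ can still defeat, which is what ultimately gives the near-optimal $\epsilon^{-4}$ complexity.
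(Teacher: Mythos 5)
Your high-level outline matches the paper's: item (i) via an existing viscosity-solution package, and item (ii) via the split
\begin{equation*}
U^{d,0,K}_{n,m}-u^d = \bigl(U^{d,0,K}_{n,m}-u^{d,K}\bigr) + \bigl(u^{d,K}-u^d\bigr)
\end{equation*}
with a perturbation lemma for the second piece and an $L^{\mathfrak p}$-MLP recursion for the first. The paper does the MLP recursion with the Marcinkiewicz--Zygmund inequality (Rio's sharp-constant version, not BDG), but that is a cosmetic difference: both control $L^{\mathfrak p}$ norms of sums of conditionally independent zero-mean terms and yield a $\sqrt{\mathfrak p-1}$-type constant per level.

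There is, however, a genuine gap in your claimed form of the MLP error. You assert that the recursion closes to
\begin{equation*}
\bigl\lVert U^{d,0,K}_{n,m}(t,x)-u^{d,K}(t,x)\bigr\rVert_{\mathfrak p}\leq \frac{(\kappa_{\mathfrak p}T)^n}{m^{n/2}}\,C_2 d^{C_2},
\end{equation*}
i.e.\ a purely geometric-in-$n$ factor times $m^{-n/2}$. That is not what the recursion produces once one solves it (this is \cite[Lemma~3.11]{HJKN2020} fed into the analysis of \cref{k09}): the correct bound carries the additional factor $e^{m^{\mathfrak p/2}/\mathfrak p}$, giving
\begin{equation*}
\bigl\lVert U^{0}_{n,m}(t,x)-u(t,x)\bigr\rVert_{\mathfrak p}\leq 2(\mathfrak p-1)^{n/2}e^{5cTn}\,e^{m^{\mathfrak p/2}/\mathfrak p}\,m^{-n/2}\,V^{q_1}(t,x).
\end{equation*}
This exponential-in-$m^{\mathfrak p/2}$ factor is the entire reason the theorem uses the sequence $M_n=\max\{k\in\N: k\leq e^{\sqrt{|\ln n|}}\}$ rather than the naive $m=n$: for $\mathfrak p>2$ the choice $m=n$ makes $e^{m^{\mathfrak p/2}/\mathfrak p}$ dominate $m^{n/2}$, so the bound does not even tend to zero; only by letting $m$ grow strictly subpolynomially in $n$ does $m^{n/2}/e^{m^{\mathfrak p/2}/\mathfrak p}\to\infty$. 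Under your proposed bound there would be no reason to use $M_n$ at all, since $m=n$ would already work. Relatedly, your concluding sentence calls $M_n=\lfloor e^{\sqrt{\ln n}}\rfloor$ \emph{superpolynomial} in $n$; it is in fact subpolynomial (it is $o(n^\varepsilon)$ for every $\varepsilon>0$), and this slow growth is essential rather than incidental. To repair the proposal you would need to actually track the $m$-dependence through the recursion, obtain the $e^{m^{\mathfrak p/2}/\mathfrak p}$ factor, and then invoke the asymptotics of $M_n$ (as the paper does via \cite[Lemma~4.5]{HJKP2021} and \cite[Lemma~5.1]{AJKP2024}) to get the calibration $n(d,\epsilon)$ and the complexity bound $\eta d^\eta\epsilon^{-(4+\delta)}$.
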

The proof of \cref{c40} is presented directly after the proof of \cref{c02}.
Let us comment on the mathematical objects in \cref{c40}.
Our goal here in \cref{c40} is to approximately solve the family of semilinear parabolic PDEs in \eqref{t30b} indexed by $d\in \N$.
The functions $\mu^d$ and $\sigma^d$
are the drift and diffusion coefficients of the linear part of the PDEs. The function $f$ is the nonlinear part of the PDEs. The functions $g^d$ is the terminal condition at time $T$ of the PDEs.
Next, \eqref{t23}--\eqref{t25} are usual regularity properties for the coefficients of the PDEs, which assure that the PDEs has  unique viscosity solutions. Note that the existence and uniqueness of viscosity solutions of semilinear PDEs of the form \eqref{t30b} are not a new result, see
\cite[Theorem 1.1]{beck2021nonlinear}. We
still state it in 
\cref{c40}\eqref{t30} and \cref{a08b}\eqref{c41} so that the 
statements of the main results in 
\cref{c40}\eqref{t31} and \cref{a08b}\eqref{c42} 
are well-posed.
Next, the filtered probability space $(\Omega,\mathcal{F}, \P, (\F_{t})_{t\in [0,T]} )$ in \cref{c40} above is the probability space on
which we introduce the stochastic MLP approximations which we employ to approximate the
solutions $u^d$ of the PDEs in \eqref{t30b}. The set $\Theta$ in \cref{c40} is used as an index set to introduce sufficiently many independent random variables.
The functions $\mathfrak{t}^\theta$ are independent random variables which are uniformly distributed on $[0,1]$.
The functions $W^{d,\theta}$ describe
 independent standard Brownian motions which we use as random input
sources for the MLP approximations.
The functions
$Y^{d,\theta,K,t,x}$ in \eqref{c09c}
 above describe Euler-Mayurama approximations which we use in the MLP approximations in \eqref{t27f} above as discretizations of the underlying Itô processes associated to the linear parts of the
PDEs in \eqref{t30b}.
The function $U^{d,\theta,K}_{n,m}$ in \eqref{t27f} describe the MLP approximations which we employ to approximately compute the solutions $u^d$ to the PDEs \eqref{t30b}.
Let us discuss the computational effort of the MLP approximations in \eqref{t27f}. Each $C^{d,K}_{n,m}$ in \eqref{c29} is the computational effort to compute a realization of $U^{d,\theta,K}_{n,m}(t,x,\omega)$. 
Here, we assume that the computational effort of $f$, $g^d$,
$(\mu^d,\sigma^d)$ plus the effort to simulate an arbitrary $d$-dimensional Brownian increments is bounded by $cd^c$, which is a polynomial of $d$. 
Due to \eqref{c09c} and
\eqref{t27f} the family $(C^{d,K}_{n,m})$ satisfies the recursive inequality \eqref{c29} above. 
\cref{c40} establishes that the solutions $u^d$ of the PDEs in \eqref{t30b}
can be approximated by the MLP approximations $U^{d,\theta,K}_{n,m}$
 in \eqref{t27f} with the number of involved function evaluations  and
the number of involved scalar random variables growing at most polynomially in the reciprocal
$1/\epsilon$ of the prescribed approximation accuracy $\epsilon\in (0,1)$ and at most polynomially in the PDE
dimension $d\in \N$. In other words, \cref{c40} states that MLP approximations overcome the curse of dimensionality when approximating the semilinear parabolic PDEs in \eqref{t30b}.

\subsection{Numerical example} We present a numerical example to illustrate the result of \cref{c40}.
\begin{example}\label{e02a}
Assume that
$T=1 $, 
$d=100$, and
 assume for $w\in \R$,
$x\in \R^d$ that $f(w)=\sin(w)$, 
$\mu^d(x)=\cos(\lVert x\rVert)x$,
$\sigma^d(x)=\mathrm{Id}_{d}$, and $g^d(x)={1}/(2+\frac{2}{5}\lVert x\rVert^2)$. In this example the PDE \eqref{t30b} is
\begin{align}
(\tfrac{\partial }{\partial t}u^d)(t,x)+\tfrac{1}{2}(\Delta u^d)(t,x)
+\cos (\lVert x\rVert)\langle x, (\nabla u^d)(t,x)\rangle+\sin(u^d(t,x))=0
\end{align} for $t\in [0,T]$, $x\in \R^d$.    
\end{example}
As we do not know the exact solution, we use the MLP approximations \eqref{t27f} with $n = m = 4$ on the uniform grid of mesh $T /10000$ as
reference solutions. The $L^4$-distance between the reference solution and the multilevel Picard approximation is approximated by taking averages over $5$ runs.
\begin{figure}
    \centering
    \includegraphics[width=0.5\linewidth]{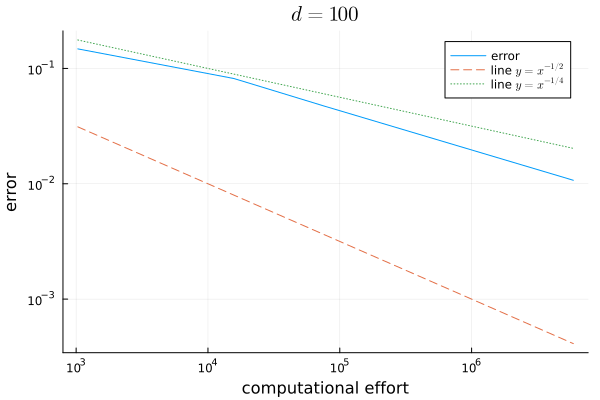}
    \caption{Numerical result for \cref{e02a}}
    \label{f02}
\end{figure}
In this example 
the polyline representing the relative $L^4$-error  tends to stay between 
the reference lines $y = x^{-1/2}$ and $y=x^{-1/4}$. This indicates a convergence rate of order $\epsilon^{-(4+\delta)}$ with respect to the $L^4$-error. The code for this example is presented in Listing~\ref{l03} in the appendix.

\subsection{A mathematical framework for DNNs}
In order to formulate our second main result, \cref{a08b},  we first need to introduce a mathematical frame work for DNNs.
\begin{setting}[A mathematical framework for DNNs]\label{m07}
Let $a\in C(\R,\R)$. Let
 $\mathbf{A}_{d}\colon \R^d\to\R^d $, $d\in \N$, satisfy for all $d\in\N$, $x=(x_1,\ldots,x_d)\in \R^d$ that
\begin{align}
\mathbf{A}_{d}(x)= \left(a(x_1),a(x_2),\ldots,a(x_d)\right).
\end{align}
Let $\mathbf{D}=\cup_{H\in \N} \N^{H+2}$.
Let
\begin{align}
\mathbf{N}= \bigcup_{H\in  \N}\bigcup_{(k_0,k_1,\ldots,k_{H+1})\in \N^{H+2}}
\left[ \prod_{n=1}^{H+1} \left(\R^{k_{n}\times k_{n-1}} \times\R^{k_{n}}\right)\right].
\end{align} Let $\mathcal{D}\colon \mathbf{N}\to\mathbf{D}$, 
$\mathcal{P}\colon \mathbf{N}\to \N$,
$
\mathcal{R}\colon \mathbf{N}\to (\cup_{k,l\in \N} C(\R^k,\R^l))$
satisfy that
for all $H\in \N$, $k_0,k_1,\ldots,k_H,k_{H+1}\in \N$,
$
\Phi = ((W_1,B_1),\ldots,(W_{H+1},B_{H+1}))\in \prod_{n=1}^{H+1} \left(\R^{k_n\times k_{n-1}} \times\R^{k_n}\right), 
$
$x_0 \in \R^{k_0},\ldots,x_{H}\in \R^{k_{H}}$ with the property that
$\forall\, n\in \N\cap [1,H]\colon x_n = \mathbf{A}_{k_n}(W_n x_{n-1}+B_n )
$ we have that
\begin{align}
\mathcal{P}(\Phi)=\sum_{n=1}^{H+1}k_n(k_{n-1}+1),
\quad 
\mathcal{D}(\Phi)= (k_0,k_1,\ldots,k_{H}, k_{H+1}),
\end{align}
$
\mathcal{R}(\Phi )\in C(\R^{k_0},\R ^ {k_{H+1}}),
$
and
\begin{align}
 (\mathcal{R}(\Phi)) (x_0) = W_{H+1}x_{H}+B_{H+1}.
\end{align}
\end{setting}
Let us comment on the mathematical objects  in  \cref{m07}.
The function $a$ is called the activation function. 
An example of $a$ is the ReLU function $\R\ni x\mapsto \max \{x,0\}\in \R$. However, in this paper we do not restrict ourselves to this function.
For all $ d\in \N $, $ \mathbf{A}_d\colon\R^d\to\R^d$  refers to the componentwise activation function. 
By $ \mathbf{N} $ we denote the set of all
parameters characterizing artificial feed-forward DNNs.
For every $H\in \N$, $k_0,k_1,\ldots,k_H,k_{H+1}\in \N$,
$
\Phi = ((W_1,B_1),\ldots,(W_{H+1},B_{H+1}))\in \prod_{n=1}^{H+1} \left(\R^{k_n\times k_{n-1}} \times\R^{k_n}\right)\subseteq \mathbf{N}
$ the natural number $H$ can be interpreted as the depth of the parameters characterizing artificial feed-forward DNN $\Phi$ and
$
(W_1,B_1),\ldots,(W_{H+1},B_{H+1})
$
can be interpreted as the parameters of $\Phi$.
By $ \mathcal{R} $   we denote the operator that maps each parameters characterizing a DNN to its corresponding function. By $ \mathcal{P} $ we denote the function that counts the number of parameters of the corresponding DNN. By $ \mathcal{D} $ we denote the function that maps the parameters characterizing a DNN to the vector of its layer dimensions.

\subsection{DNNs overcome the curse of dimensionality when approximating semilinear parabolic PDEs in $L^\mathfrak{p}$-sense}
\begin{theorem}\label{a08b}
Assume \cref{m07}. Let $\alpha\in [0,\infty)\setminus\{1\}$, $\mathfrak{a}_0,\mathfrak{a}_1\in C(\R,\R)$ satisfy for all $x\in \R$ that $\mathfrak{a}_0=\max\{x,\alpha x\}$ and
$\mathfrak{a}_1=\ln(1+e^x)$. Assume that $a\in \{\mathfrak{a}_0,\mathfrak{a}_1\}$.
Let $\beta,\mathfrak{p}\in [2,\infty) $, $c\in [1,\infty)$.
For every $d\in \N$, $\varepsilon\in (0,1)$, $v\in\R^d$
let
$\Phi_{\mu_\varepsilon^d},\Phi_{\sigma_\varepsilon^d,v}, \Phi_{g^d_\varepsilon}
\in \mathbf{N}
$,
$f\in C(\R,\R)$, $g^d,g_\varepsilon^d\in C(\R^d,\R) $,
$\mu^d,\mu^d_\varepsilon\in C(\R^d,\R^d)$,
$\sigma^d,\sigma^d_\varepsilon\in C(\R^{d\times d},\R^d)$
satisfy 
for all $v\in \R^d$
that
$\mu_\varepsilon^d=\mathcal{R}(\Phi_{\mu_\varepsilon^d}) $, 
$\sigma_\varepsilon^d (\cdot)v=\mathcal{R}(\Phi_{\sigma_\varepsilon^d,v})$,
$g^d_\varepsilon=\mathcal{R}(\Phi_{g^d_\varepsilon})$. Assume for all
 $d\in \N$, $\varepsilon\in (0,1)$,
$v\in\R^d$ that
$\mathcal{D}(\Phi_{\sigma_\varepsilon^d,v})=\mathcal{D}(\Phi_{\sigma_\varepsilon^d,0})$.
Assume for all 
$d\in \N$, $\varepsilon\in (0,1)$,
$v,w\in \R$,
$x,y\in \R^d$ that
\begin{align}
\max\{
\lVert
\mu_\varepsilon^d(x)-\mu_\varepsilon^d(y)
\rVert,
\lVert
\sigma_\varepsilon^d(x)-\sigma_\varepsilon^d(y)
\rVert
\}
\leq c\lVert x-y\rVert,\label{d04b}
\end{align}
\begin{align}
\lvert
g_\varepsilon^d(x)-g_\varepsilon^d(y)
\rvert\leq c\frac{(d^c+\lVert x\rVert)^\beta+(d^c+\lVert y\rVert)^\beta}{2\sqrt{T}}\lVert x-y\rVert,\label{d09b}
\end{align}
\begin{align}
\lvert g^d_\varepsilon(x)\rvert \leq c(d^c +\lVert x\rVert)^\beta,\quad \max\!\left \{
\lVert\mu^d_\varepsilon(0)\rVert,
\lVert\sigma^d_\varepsilon(0)\rVert,
\lvert Tf(0)\rvert, \lvert g^d_\varepsilon(0)\rvert
\right\}\leq cd^c,\label{d05b}
\end{align}
\begin{align}
\max\{
\lVert
\mu^d_\varepsilon(x)-\mu^d(x)
\rVert,
\lVert
\sigma^d_\varepsilon(x)-\sigma^d(x)
\rVert,
\lVert
g^d_\varepsilon(x)-g^d(x)
\rVert
\}\leq \varepsilon cd^c (d^c+\lVert x\rVert)^\beta,\label{d08c}
\end{align}
\begin{align}
\max\!\left\{
\mathcal{P}(\Phi_{g^d_\varepsilon}),
\mathcal{P}(\Phi_{\mu^d_\varepsilon})
,
\mathcal{P}(\Phi_{\sigma^d_\varepsilon,0})\right\}\leq cd^c\varepsilon^{-c}
.\label{c01eb}
\end{align}
Then the following items are true.
\begin{enumerate}[(i)]
\item \label{c41}For every $d\in \N$ there exists a unique at most polynomially growing viscosity solution $u^d$ of 
\begin{align}\label{c11b}
\frac{\partial u^d}{\partial t}(t,x)
+\frac{1}{2}\mathrm{trace}(\sigma^d(x)(\sigma^d(x))^\top
(\mathrm{Hess}_x u^d(t,x) ))
+\langle \mu^d(x),(\nabla_xu^d) (t,x)\rangle
+f(u^d(t,x))=0
\end{align}
with $u^d(T,x)=g^d(x)$ for $t\in (0,T)\times \R^d$.
\item \label{c42}There exists $(C_{\delta})_{\delta\in (0,1)}\subseteq(0,\infty)$, $\eta\in (0,\infty)$, $(\Psi_{d,\epsilon})_{d\in \N, \epsilon\in(0,1)}\subseteq \mathbf{N}$ such that for all 
$d\in \N$, $\epsilon\in(0,1)$ we have that 
$\mathcal{R}(\Psi_{d,\epsilon})\in C(\R^d,\R)$,  
\begin{align}\label{k22}
\mathcal{P}(\Psi_{d,\epsilon})\leq C_\delta \eta d^\eta\epsilon^{-(4+\delta)-6c},\quad \text{and}\quad 
\left(
\int_{[0,1]^d}
\left\lvert
(\mathcal{R}(\Psi_{d,\epsilon}))(x)
-
u^d(0,x)
\right\rvert^\mathfrak{p}
dx\right)^\frac{1}{\mathfrak{p}}<\epsilon.
\end{align}
\end{enumerate}
\end{theorem}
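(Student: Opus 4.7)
The plan is to construct $\Psi_{d,\epsilon}$ by emulating, as a single feedforward DNN, one realization of the MLP approximation \eqref{t27f} built from the DNN surrogates $\mu^d_\varepsilon,\sigma^d_\varepsilon,g^d_\varepsilon$ (which, unlike the continuous coefficients $\mu^d,\sigma^d,g^d$ themselves, lie already in $\mathcal{R}(\mathbf{N})$) together with a DNN proxy for the scalar nonlinearity $f$. The argument splits into three steps: (a) an MLP $L^\mathfrak{p}$-error budget transferred from \cref{c40}; (b) a perturbation estimate propagating the coefficient error \eqref{d08c} to the PDE solution, followed by a realization extraction; (c) an explicit DNN emulation of the resulting deterministic map, together with a parameter count via the composition/parallelization calculus on $\mathbf{N}$.

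For (a) and (b), hypotheses \eqref{d04b}--\eqref{d05b} together with the Lipschitz/linear-growth assumption on $f$ required by \cref{c40} show that the perturbed data $(\mu^d_\varepsilon,\sigma^d_\varepsilon,g^d_\varepsilon,f)$ satisfy \eqref{t23}--\eqref{t25} uniformly in $\varepsilon\in(0,1)$. Applying \cref{c40} to this family yields integers $n=n(d,\epsilon)$, $m=M_n$, $K=m^n$ and MLP random fields $\tilde U^{d,\theta,K}_{n,m}$ approximating the perturbed viscosity solution $\tilde u^d_\varepsilon$ to accuracy $\epsilon/3$ in $\|\cdot\|_\mathfrak{p}$ uniformly on $[0,\mathbf{k}]^d$, with cost at most $\eta d^\eta\epsilon^{-(4+\delta)}$. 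A standard Feynman--Kac plus Gr\"onwall stability argument (flow comparison of the SDEs driven by $(\mu^d_\varepsilon,\sigma^d_\varepsilon)$ versus $(\mu^d,\sigma^d)$, then Lipschitz iteration of $f$) combined with \eqref{d08c} gives $\sup_{x\in[0,1]^d}|\tilde u^d_\varepsilon(0,x)-u^d(0,x)|\le C d^C\varepsilon$; choosing $\varepsilon$ polynomially small in $\epsilon/d$ absorbs this into $\epsilon/3$ without damaging the polynomial cost bound. A Fubini plus Markov argument applied to $\E\!\int_{[0,1]^d}|\tilde U^{d,0,K}_{n,m}(0,x)-u^d(0,x)|^\mathfrak{p}dx$ then extracts an $\omega_\star\in\Omega$ for which the deterministic map $x\mapsto\tilde U^{d,0,K}_{n,m}(0,x,\omega_\star)$ satisfies the $L^\mathfrak{p}([0,1]^d)$-bound required by \eqref{k22}.

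For (c), fix $\omega_\star$ and unfold \eqref{c09c}--\eqref{t27f} as explicit deterministic expressions in $x$. The Euler--Maruyama map $x\mapsto Y^{d,\theta,K,t,x}_s(\omega_\star)$ is a $K$-fold composition of affine maps alternating with the DNNs $\mathcal{R}(\Phi_{\mu^d_\varepsilon})$ and $\mathcal{R}(\Phi_{\sigma^d_\varepsilon,v})$; the hypothesis $\mathcal{D}(\Phi_{\sigma^d_\varepsilon,v})=\mathcal{D}(\Phi_{\sigma^d_\varepsilon,0})$ permits absorbing the linear dependence on the Brownian increment $v$ into the output layer of the $\sigma$-subnetwork. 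Composing with $\mathcal{R}(\Phi_{g^d_\varepsilon})$ and with a DNN proxy $\Phi_{f,\varepsilon}\in\mathbf{N}$ of $f$---the scalar Lipschitz $f$ is uniformly approximable on any compact interval to accuracy $\varepsilon$ by a network with $\mathcal{O}(\varepsilon^{-1})$ parameters in each activation $\mathfrak{a}_0,\mathfrak{a}_1$ (piecewise-linear interpolation for leaky ReLU; combined with the scaling identity $\beta^{-1}\mathfrak{a}_1(\beta x)\to\max\{x,0\}$ for softplus)---and finally forming the linear combination of the $m^n+m^{n-1}+\cdots+m$ terms of \eqref{t27f} realizes $\tilde U^{d,0,K}_{n,m}(0,\cdot,\omega_\star)$ as $\mathcal{R}(\Psi_{d,\epsilon})$ for some $\Psi_{d,\epsilon}\in\mathbf{N}$.

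The main obstacle is the sharp parameter count, not the construction itself. Each time a vector-valued state has to cross an activation layer unchanged (e.g.\ the previous state in the Euler iteration or the base point carried through nested MLP levels), one must insert an identity channel; for all three admissible activations this is realized by the two-neuron rule $x=a(x)-a(-x)$ (exact for softplus via $\mathfrak{a}_1(x)-\mathfrak{a}_1(-x)=x$, exact for leaky ReLU with $\alpha\ne 1$ via the analogous affine combination, and exact for ReLU), costing $\mathcal{O}(d)$ additional weights per layer. Accumulating this overhead across the $K$ Euler substeps, the $n$ nested MLP levels, and the $m^n$ parallel Monte Carlo samples, and combining with the per-subnetwork budget $\mathcal{P}\le cd^c\varepsilon^{-c}$ from \eqref{c01eb}, multiplies the MLP cost from \cref{c40} by a factor polynomial in $d$ and $\varepsilon^{-c}$. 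The choice of $\varepsilon$ fixed in step (b) converts this into the bound $\mathcal{P}(\Psi_{d,\epsilon})\le C_\delta\eta d^\eta\epsilon^{-(4+\delta)-6c}$ of \eqref{k22}; item \eqref{c41} is obtained as in \cref{c40} by passing to the limit $\varepsilon\downarrow 0$ in \eqref{d04b}--\eqref{d08c}, which endows $(\mu^d,\sigma^d,g^d)$ with the Lipschitz and linear-growth properties required there.
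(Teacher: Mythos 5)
There is a genuine gap in step (a): you claim the perturbed data $(\mu^d_\varepsilon,\sigma^d_\varepsilon,g^d_\varepsilon,f)$ satisfy \eqref{t23}--\eqref{t25} so that \cref{c40} can be applied, but this is false. Hypothesis \eqref{t23} of \cref{c40} requires at most linear growth of the terminal condition, $\lvert g(x)\rvert\le c(d^c+\lVert x\rVert^2)^{1/2}$, and \eqref{t24} requires a globally uniform Lipschitz constant for $g$. By contrast \eqref{d05b} and \eqref{d09b} only give $\lvert g^d_\varepsilon(x)\rvert\le c(d^c+\lVert x\rVert)^\beta$ and a locally Lipschitz bound whose modulus grows like $(d^c+\lVert x\rVert)^\beta$, with $\beta\ge 2$; so $g^d_\varepsilon$ does \emph{not} fall under \cref{c40}. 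The paper circumvents this by invoking \cref{c02}, whose hypotheses \eqref{v04b}--\eqref{v07} are formulated against a polynomial Lyapunov weight $\varphi(x)^{\beta/p}$ precisely so that $\beta\ge 2$ growth is admissible. Your proposal as written has no mechanism to absorb this discrepancy, and the MLP error budget it relies on is therefore unsupported.

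A second gap concerns the nonlinearity $f$. You introduce a DNN proxy $\Phi_{f,\varepsilon}$ only in step (c), after the error budget has been computed using the exact $f$ in steps (a)--(b); replacing $f$ by $\mathcal{R}(\Phi_{f,\varepsilon})$ inside the unfolded MLP tree produces a different deterministic function, and the resulting discrepancy $\lvert\,\mathcal{R}(\Psi_{d,\epsilon})(x)-\tilde U^{d,0,K}_{n,m}(0,x,\omega_\star)\,\rvert$ is never estimated. Moreover, the compact-interval accuracy $\mathcal{O}(\varepsilon^{-1})$ you cite for approximating the scalar Lipschitz $f$ is insufficient here: the arguments fed into $f$ in the recursion \eqref{t27f} are the unbounded random variables $U^{d,\cdot}_{\ell,m}(\cdot,\cdot)$, so one needs a global, polynomially-weighted error bound of the form $\lvert f_\varepsilon(x)-f(x)\rvert\le\varepsilon(1+\lvert x\rvert^q)$, as provided in the paper by Lemmas~\ref{p10} and~\ref{p10a} for the leaky-ReLU and softplus activations respectively. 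The paper handles both issues simultaneously by constructing $f_\varepsilon$ with such a global bound (\eqref{d10}), defining the MLP scheme \eqref{t27d} with $f_\varepsilon$ from the outset, and measuring its error against $u^{d,\varepsilon}$ (the SFPE solution with perturbed drift, diffusion, terminal condition \emph{and} $f_\varepsilon$) via \cref{c02}, before bounding $\lvert u^{d,\varepsilon}-u^d\rvert$ by the perturbation lemma. Repairing your argument essentially amounts to redoing the paper's proof via \cref{a08} and \cref{k04c}; the high-level construction in your step (c), including the identity-channel trick $x=a(x)-a(-x)$ for all three activations, is on the right track, but the preceding error analysis must be reorganized around the DNN surrogates from the start.
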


Let us make some comments on the mathematical objects  in  \cref{a08b}. First of all, in \cref{a08b} we consider different types of activation functions.
The activation function $\mathfrak{a}_\nu$ for $\nu\in \{0,1\}$ is the ReLU activation if $\nu=\alpha=0$, the leaky ReLU activation if
$\nu=0$ and $\alpha\in (0,1)$, or the softplus activation if $\nu=1$. Next,
the assumptions above \eqref{d04b} ensure that the functions
$g^d_\varepsilon,\mu^d_\varepsilon,\sigma^d_\varepsilon,
$
which approximate the terminal condition and the linear part of 
the PDE  are DNNs.
The bound $cd^c\varepsilon^{-c}$ in \eqref{c01eb}, 
which is a polynomial of $d$ and $\varepsilon^{-1}$,
ensures that the functions
$\mu^d_\varepsilon,\sigma^d_\varepsilon, g^d_\varepsilon $ 
are DNNs whose corresponding numbers of parameters grow 
without the curse of dimensionality. 
Under these assumptions  \cref{a08b} states that, roughly speaking, if DNNs can approximate the terminal condition and the linear part of the PDE in \eqref{c11b} 
without the curse of dimensionality, 
then they can also approximate its solution without the curse of dimensionality. 
More precisely, we show in \eqref{k22} that for every dimension $d\in \N$ and for every accuracy $\epsilon \in (0,1)$ the $L^\mathfrak{p}(dx)$-expression error of the unique viscosity solution of the nonlinear PDE~\eqref{c11b} is $\epsilon$ and the number of parameters of the DNNs is upper bounded polynominally in $d$ and $\epsilon^{-1}$. Therefore, the approximation rates are free from the curse of dimensionality.
We refer to \cite{CR2023,CHW2022,GHJVW2023,hutzenthaler2020proof,JSW2021,
neufeld2024rectified} 
for similar results in $L^2$-sense.

\subsection{Sketch of the proofs}
Since \eqref{t31} in \cref{c40} contains an $L^\mathfrak{p}$-estimate we first need to prove  $L^\mathfrak{p}$-estimates for MLP approximations (cf.\ \cref{k09} and \cref{c02}), which, to the best of our knowledge, still do not appear in the scientific literature for general $\mathfrak{p}\in [2,\infty)$.
The main tool to get $L^\mathfrak{p}$-estimates is the Marcinkiewicz-Zygmund inequality (see \cite[Theorem~2.1]{Rio2009}). In addition, \cref{c02} is the $L^\mathfrak{p}$-version of \cite[Proposition~4.1]{HJKN2020}. Our first main result,
\cref{c40}, is a direct consequence of \cref{k09} and \cref{c02} and its proof is presented directly after the proof of \cref{c02}. From the technical point of view, the main novelty of \cref{c40} is the sequence $(M_n)_{n\in \N}$. 
An analysis of the error  and computational complexity (see the proof of Theorem~\ref{c40}) shows that, in order to make the MLP algorithm \eqref{t27f} converge in $L^\mathfrak{p}$, we can choose $m=M_n$ such that
\begin{align}\label{a02a}
\liminf_{j\to\infty}M_j=\infty, \quad \limsup_{j\to\infty} \frac{(M_j)^{\mathfrak{p}/2}}{j}<\infty, \quad\text{and}\quad \sup_{k\in \N}\frac{M_{k+1}}{M_k}<\infty .
\end{align}In the $L^2$-case we can simply choose $M_n=n$.
In order to have \eqref{a02a} we choose $M_n=\max \{k\in \N\colon k\leq\exp (\lvert\ln(n)\rvert^{1/2})\}$ as done in, e.g., \cite[Theorem~1.1]{HJKP2021}. This choice of $(M_n)_{n\in \N}$ is convenient for the simulation since it does not depend on the exponent $\mathfrak{p}$.

\cref{a08b}  follows from \cref{a08} and Lemmas \ref{a10}, \ref{a10b}, \ref{p10}, and \ref{p10a}. We present the proof of \cref{a08b} after the proof of \cref{a08}. Let us sketch the proof of \cref{a08}.
Although the result presented in \cref{a08} is purely deterministic, we use probabilistic arguments to prove its statement. More precisely, we 
employ the theory of full history recursive MLP approximations, which are numerical approximation methods
for which we know now (cf. \cref{c40}) that   they overcome the curse of dimensionality.
We refer to \cite{NW2022} for the convergence analysis of MLP algorithms for semilinear PIDEs and  to \cite{
beck2020overcomingElliptic,
beck2020overcoming,
hutzenthaler2019multilevel,
giles2019generalised,
hutzenthaler2021multilevel,
HJK2022,
HJKN2020,
HJKNW2020,
HJvW2020,
HK2020,
HN2022a, NNW2023} 
for corresponding results proving that MLP algorithms overcome the curse of dimensionality for PDEs without any non-local/ jump term, all in $L^2$-sense.

The main strategy of the proof of \cref{a08}, roughly speaking, 
is to demonstrate that these MLP approximations can be  represented by DNNs, 
if the coefficients determining the linear part, 
the terminal condition, and the nonlinear part are corresponding DNNs 
(cf.\  \cref{k04c}). 
Such ideas have been successfully applied to prove that
DNNs overcome the curse of dimensionality in the numerical approximations of
 {semilinear} heat equations (see  \cite{hutzenthaler2020proof,AJK+2023})
as well as {semilinear} Kolmogorov
PDEs (see \cite{CHW2022}). We also refer to 
\cite{GHJVW2023,JSW2021} for results proving that
DNNs
overcome the curse of dimensionality when approximating {linear} PDEs.

More precisely, 
we represent $u^d$ as solution of 
the stochastic fixed point equation (SFPE) \eqref{c31} where the forward processes $(X^{d,\theta,t,x}_{s})$ are defined by \eqref{c32} with drift $\mu^d$ and diffusion $\sigma^d$.
We define the MLP approximations in \eqref{t27d} involving the Euler-Maruyama approximations in \eqref{c09a}. Each $U_{n,m}^{d,\theta,K,\varepsilon}$ can be considered as approximation of the solution $u^d$ to the  PDE \eqref{c11b}. In order to estimate the approximation error
$U_{n,m}^{d,\theta,K,\varepsilon}-u^d$
 we decompose
$U_{n,m}^{d,\theta,K,\varepsilon}-u^d=U_{n,m}^{d,\theta,K,\varepsilon}-u^{d,\varepsilon}+u^{d,\varepsilon}-u^d$ where $u^{d,\varepsilon}$ is defined by SFPE \eqref{c30} where the forward processes $X^{d,\theta,\varepsilon,t,x}$ here are defined by \eqref{c33} with drift $\mu^d_\varepsilon$ and diffusion $\sigma^d_\varepsilon$, which are DNNs. The error $U_{n,m}^{d,\theta,K,\varepsilon}-u^{d,\varepsilon}$ is the error bound for an MLP approximation involving Euler-Maruyama approximations and therefore can be established in \cref{c02} (see \eqref{c34}). The error $u^{d,\varepsilon}-u^d$ can be estimated, as in the $L^2$ case, by the perturbation result in \cite[Lemma~2.3]{CHW2022}. The main difficulty in the case of leaky ReLU and softplus activation is, compared to the case with ReLU, that here we have another definition of the operator $\odot$ than that in, e.g., \cite{CHW2022} (see \cref{m08,m11b}) and as a consequence we need to rebuild the whole DNN calculus.

The paper is organized as follows. In \cref{c35} we establish $L^\mathfrak{p}$-estimates for MLP approximations and prove our first main result, \cref{c40}.
In \cref{c36}
we study DNN representations for MLP approximations for PDEs of the
form \eqref{c11b}. 
In \cref{c37} we use the main representations in \cref{c36} to prove our second main result, \cref{a08b}.

\subsection{Future works}
In future work, we aim to extend our analysis to the gradient-dependent case, that is, to
situations in which the nonlinearity $f$ also depends on the gradient $\nabla u$. The
main challenge in this setting lies in the fact that the gradient of the solution to the
corresponding PDE is represented via the Bismut--Elworthy--Li formula.
Moreover, the associated stochastic fixed-point equation, which serves as the key tool
for establishing convergence of the corresponding MLP scheme, must simultaneously
capture both the solution and its gradient. Consequently, since the MLP scheme can be
interpreted as a multilevel Monte Carlo approximation of the unique solution to this
stochastic fixed-point equation, its structure in the gradient-dependent case differs
significantly from that in the gradient-independent setting (see
\cite{neufeld2024multilevel} and \cite{NW2023}).
In contrast, for the special case of the semilinear heat equation with gradient-dependent
nonlinearity, the corresponding MLP scheme becomes substantially simpler. In this
setting, it has been shown that the MLP approach can overcome the curse of
dimensionality also in the $L^{\mathfrak{p}}$-framework for $\mathfrak{p} \in [2,\infty)$
(see \cite{NGUYEN2025116834}).

\section{MLP approximations}\label{c35}

\subsection{Error bounds for abstract MLP approximations}
In this section we establish $L^\mathfrak{p}$-estimates for MLP approximations. More precisely, we extend \cite[Corollary~3.12]{HJKN2020} and \cite[Proposition~4.1]{HJKN2020} to $L^\mathfrak{p}$-estimates. First of all, we work with an abstract MLP setting, \cref{s01}, and prove 
$L^\mathfrak{p}$-error estimates, see \cref{k09}. The main difference between the general $L^\mathfrak{p}$-case and the $L^2$-case is that in the $L^\mathfrak{p}$-case we appeal to 
the Marcinkiewicz-Zygmund inequality (see \cite[Theorem~2.1]{Rio2009}). Having proven the $L^\mathfrak{p}$-error estimate we easily prove
the $L^\mathfrak{p}$-error estimate for MLP approximations involving Euler-Maruyama approximations, see \cref{c02}.
\begin{setting}\label{s01}
Let $d\in \N$, $p_v\in [1,\infty)$,
$c,T \in (0,\infty)$, 
$f \in C( \R,\R)$, 
$g\in C(\R^d,\R)$,
$V\in C([0,T]\times\R^d,[1,\infty))$,  $  \Theta = \bigcup_{ n \in \N } \Z^n$.
Let
$\mathfrak{t}^\theta\colon \Omega\to[0,1]$,
$\theta\in \Theta$, be identically distributed and independent random variables which satisfy for all
$t\in [0,1]$
that
$\P(\mathfrak{t}^0 \leq t)= t$.
Let
$(X^{\theta,s,x}_{t})_{s\in[0,T],t\in[s,T],x\in\R^d}
\colon \{(\sigma,\tau)\in [0,T]^2\colon \sigma\leq \tau \} \times\R^d\times \Omega\to\R^d
$, $\theta\in \Theta$, be measurable as well as independent and identically distributed. Assume that
$(X^{\theta,s,x}_{t})_{s\in[0,T],t\in[s,T],x\in\R^d,\theta\in \Theta} 
$ and $(\mathfrak{t}^\theta)_{\theta\in \Theta}$ are independent.
Assume for all 
$s\in [0,T]$, $t\in[s,T]$,
$x\in \R^d$,
$w_1,w_2\in \R$
that
\begin{equation}
\label{a14}
  \lvert g(x)\rvert \leq  V(T,x), \quad 
\lvert Tf(0)\rvert\leq V(s,x ), 
\end{equation}%
\begin{equation}\label{a14c}
\lvert  f(w_1)-f(w_2)\rvert\leq c \lvert w_1-w_2\rvert,
\end{equation}
\begin{equation}
 \bigl\lVert V(t,X_{t}^{0,s,x})\bigr\rVert_{p_v}
\leq V(s,x).
\label{k04b}
\end{equation}
Let
$ 
  {U}_{ n,m}^{\theta } \colon [0, T] \times \R^d \times \Omega \to \R
$,
$n,m\in\Z$, $\theta\in\Theta$, satisfy for all 
$n,m\in \N$, $\theta\in\Theta$, $t\in[0,T]$, $x\in\R^d$ that
$
{U}_{-1,m}^{\theta}(t,x)={U}_{0,m}^{\theta}(t,x)=0$ and
\begin{align} \begin{split} 
\label{t27}
&  {U}_{n,m}^{\theta}(t,x)
=  \frac{1}{m^n}\sum_{i=1}^{m^n}
g(X^{(\theta,0,-i),t,x}_T)\\
&+
\sum_{\ell=0}^{n-1}\frac{T-t}{m^{n-\ell}}
    \sum_{i=1}^{m^{n-\ell}}
     \bigl( f\circ {U}_{\ell,m}^{(\theta,\ell,i)}
-\1_{\N}(\ell)
f\circ {U}_{\ell-1,m}^{(\theta,-\ell,i)}
\bigr)\!
\left(t+(T-t)\mathfrak{t}^{(\theta,\ell,i)},X_{t+(T-t)\mathfrak{t}^{(\theta,\ell,i)}}^{(\theta,\ell,i),t,x}\right).\end{split}
\end{align}
\end{setting}

\begin{lemma}[Independence and distributional properties]
\label{ppt0}
Assume \cref{s01}. 
Then the following items are true.
\begin{enumerate}[(i)]
\item  \label{ppt1}
We have
for all $n \in \N_0$, $m \in \N$, $\theta\in\Theta $ that
$
  {U}_{ n,m}^{\theta }
$ and $f\circ {U}_{ n,m}^{\theta }$
are 
measurable.
\item  \label{ppt2}
We have\footnote{Let $( \Omega, \mathcal F, \mathbb P )$ be a probability space, let $n \in \N$, 
and let $( S_k, \mathcal{S}_k )$, $k \in \{ 1, 2, ..., n \}$, be measurable spaces.
Note that for all 
$ X_k \colon \Omega \to S_k $, $ k \in \{ 1, 2, ..., n \} $,
we have that 
$\mathfrak{S}( X_1, X_2, ..., X_n )$ is the smallest sigma-algebra 
on $\Omega$ with respect to which $X_1, X_2, ..., X_n$ are measurable.}
 for all $n \in \N_0$, $m \in \N$, $\theta \in \Theta$ that
\begin{align}\begin{split}
&\mathfrak{S}( ( U^{ \theta }_{ n, m }( t, x ) )_{t\in [0,T],x\in \R^d})
\subseteq
  \mathfrak{S}
( ( \mathfrak{t}^{(\theta,\vartheta)} )_{ \vartheta \in \Theta }, (  X_{t}^{(\theta,\vartheta),s,x})_{\vartheta\in \Theta, s\in [0,T], t\in [s,T],x\in \R^d } ).
\end{split}
\end{align}

\item  \label{ppt3}
We have 
for all $n \in \N_0$, $m \in \N$, $\theta\in\Theta $ that
$(U_{ n,m}^{\theta}(t,x))_{t\in [0,T],x\in \R^d}$, 
$(X^{\theta,s,x}_{t})_{s\in [0,T],t\in [s,T],x\in \R^d}$,  and $\mathfrak{t}^\theta$ are independent.
\item  \label{ppt4}
We have
for all $n, m \in \N_0$, $m \in \N$, $i,j,k,\ell,\nu \in \Z$, $\theta \in \Theta$ with $(i,j) \neq (k,l)$ 
that
$
 (U^{(\theta,i,j)}_{n,m}(t,x))_{t\in[0,T],x\in\R^d}
$,
$ ( 
  U^{(\theta,k,\ell)}_{n,m}(t,x))_{t\in[0,T],x\in\R^d}
$,
$ \mathfrak{t}^{(\theta,i,j)}$, and $(X^{(\theta,i,j),s,x}_{t})_{s\in[0,T],t\in [s,T],x\in\R^d}$
are independent.
\item  \label{ppt5}
We have
for all $n \in \N_0$, $m \in \N$, $t\in[0,T]$, $x\in\R^d$ that 
$  U^\theta_{n, m}(t,x)$, $\theta \in \Theta$,
are identically distributed.
\end{enumerate}
\end{lemma}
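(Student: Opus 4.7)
The plan is to prove items \eqref{ppt1}--\eqref{ppt5} simultaneously by strong induction on $n \in \N_0$, relying on the recursive definition \eqref{t27} and the tree-like structure of the multi-indices used in its construction.

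\textbf{Base case} ($n = 0$). Since $U_{-1,m}^\theta = U_{0,m}^\theta \equiv 0$, items \eqref{ppt1} and \eqref{ppt2} are immediate, and items \eqref{ppt3}--\eqref{ppt5} reduce to the assumed independence and identical-distribution properties of $(\mathfrak{t}^\theta)_{\theta \in \Theta}$ and $(X^{\theta,\cdot,\cdot}_\cdot)_{\theta \in \Theta}$.

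\textbf{Inductive step.} Assume the five items hold for all indices $k \in \{-1, 0, \ldots, n-1\}$. The key structural observation is that, by \eqref{t27}, the random variables appearing in $U_{n,m}^\theta$ all carry multi-indices of the form $(\theta, \vartheta)$ with $\vartheta$ a nonempty element of $\Theta$: namely $X^{(\theta,0,-i),\cdot,\cdot}_\cdot$, $\mathfrak{t}^{(\theta,\ell,i)}$, $X^{(\theta,\ell,i),\cdot,\cdot}_\cdot$, and recursively $U_{\ell,m}^{(\theta,\ell,i)}$ and $U_{\ell-1,m}^{(\theta,-\ell,i)}$ for $\ell \in \{0,\ldots,n-1\}$. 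Item \eqref{ppt1} then follows from the measurability of each building block (continuity of $f,g$, measurability of the $X^{\cdot,\cdot,\cdot}_\cdot$ and $\mathfrak{t}^\cdot$, and the inductive version of \eqref{ppt1}). Item \eqref{ppt2} follows by substituting the inductive version of \eqref{ppt2} into \eqref{t27}: the sigma-algebras $\mathfrak{S}((\mathfrak{t}^{(\theta,\ell,i,\vartheta)})_\vartheta, (X^{(\theta,\ell,i,\vartheta),\cdot,\cdot}_\cdot)_\vartheta)$ and the analogous one for $(\theta,-\ell,i)$ are both sub-sigma-algebras of $\mathfrak{S}((\mathfrak{t}^{(\theta,\vartheta)})_\vartheta, (X^{(\theta,\vartheta),\cdot,\cdot}_\cdot)_\vartheta)$, as is each single-index block.

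\textbf{Deriving \eqref{ppt3}--\eqref{ppt5}.} Item \eqref{ppt3} is obtained by combining \eqref{ppt2} with the observation that no multi-index of the form $(\theta, \vartheta)$ with $\vartheta \neq \emptyset$ coincides with $\theta$, so the underlying random variables generating $U_{n,m}^\theta$ are independent of $\mathfrak{t}^\theta$ and $(X^{\theta,s,x}_t)$ by the standing assumption. For item \eqref{ppt4}, the inductive \eqref{ppt2} localizes $U_{n,m}^{(\theta,i,j)}$ inside $\mathfrak{S}((\mathfrak{t}^{(\theta,i,j,\vartheta)})_\vartheta, (X^{(\theta,i,j,\vartheta),\cdot,\cdot}_\cdot)_\vartheta)$ and $U_{n,m}^{(\theta,k,\ell)}$ inside $\mathfrak{S}((\mathfrak{t}^{(\theta,k,\ell,\vartheta)})_\vartheta, (X^{(\theta,k,\ell,\vartheta),\cdot,\cdot}_\cdot)_\vartheta)$; since $(i,j) \neq (k,\ell)$, the two index sets $\{(\theta,i,j,\vartheta) : \vartheta \in \Theta\}$ and $\{(\theta,k,\ell,\vartheta) : \vartheta \in \Theta\}$ are disjoint and both disjoint from $\{(\theta,i,j)\}$ itself, so independence follows from the block independence of $(\mathfrak{t}^\eta, X^{\eta,\cdot,\cdot}_\cdot)_{\eta \in \Theta}$. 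Item \eqref{ppt5} will follow from a measurable-functional representation: inductively one shows that $U_{n,m}^\theta$ equals a fixed Borel functional $\Phi_{n,m}$ applied to the family $((\mathfrak{t}^{(\theta,\vartheta)})_\vartheta, (X^{(\theta,\vartheta),\cdot,\cdot}_\cdot)_\vartheta)$, and the joint law of this family is the same for every $\theta$ by the identical-distribution assumption.

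\textbf{Main obstacle.} The only real subtlety is bookkeeping of the multi-index tree: one must check that for each fixed $\theta$ and each pair $(\ell,i)$ with $\ell \in \{0,\ldots,n-1\}$, the inductive hypothesis applied to $U_{\ell,m}^{(\theta,\ell,i)}$ and to $U_{\ell-1,m}^{(\theta,-\ell,i)}$ places them in disjoint index subtrees, so that the mutual independence needed to evaluate the recursion cleanly is preserved. Once this combinatorial verification is made, all five items fall out directly from the recursion together with the inductive hypotheses and the standing independence assumptions; no analytic estimate is required.
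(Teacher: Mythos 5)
Your proof is correct; the paper gives no argument of its own here but simply cites \cite[Lemma~3.2]{HJKN2020}, and the strong induction on $n$ over the multi-index tree that you propose is precisely the standard argument behind that reference. Two small technical points are worth spelling out more carefully if you were to write this in full. First, for items \eqref{ppt3} and \eqref{ppt4} the essential observation is the one you make: because every $\vartheta\in\Theta=\bigcup_{n\in\N}\Z^n$ has length at least one, the map $\vartheta\mapsto(\theta,\vartheta)$ is injective and its image never contains $\theta$ itself nor $(\theta,k,\ell)$; combined with \eqref{ppt2} and the assumed joint independence of the iid families $(\mathfrak{t}^\eta)_{\eta\in\Theta}$ and $(X^{\eta,\cdot,\cdot}_\cdot)_{\eta\in\Theta}$, a grouping argument over disjoint index blocks gives the claimed independences. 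Second, for item \eqref{ppt5} the ``fixed Borel functional'' device does work, but one should be explicit that each $X^{(\theta,\vartheta),\cdot,\cdot}_\cdot$ is regarded as a random element of the space of measurable maps on $\{(\sigma,\tau):\sigma\le\tau\}\times\R^d$ (the joint measurability assumed in \cref{s01} is what makes this legitimate), so that $U^\theta_{n,m}(t,x)$ is, for each fixed $(t,x)$, a $\theta$-independent measurable functional of the countable family $\big((\mathfrak{t}^{(\theta,\vartheta)})_{\vartheta},(X^{(\theta,\vartheta),\cdot,\cdot}_\cdot)_{\vartheta}\big)$, whose joint law does not depend on $\theta$ because the sources are iid and $\vartheta\mapsto(\theta,\vartheta)$ is an injection of $\Theta$ into itself. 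Finally, your ``main obstacle'' remark slightly overstates what this lemma needs: the disjointness of the subtrees carrying $U^{(\theta,\ell,i)}_{\ell,m}$ and $U^{(\theta,-\ell,i)}_{\ell-1,m}$ is exactly what later justifies the telescoping-sum and variance computations in the proof of \cref{k09}, but items \eqref{ppt1}--\eqref{ppt5} themselves only use the coarser prefix-disjointness arguments above.
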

\begin{proof}[Proof of \cref{ppt0}]See
\cite[Lemma~3.2]{HJKN2020}.
\end{proof}
\begin{theorem}
[$L^p$-error estimates, $p\in [2,\infty)$, for MLP approximations]\label{k09}Assume \cref{s01}. Let $p\in [2,\infty)$, $q_1\in [1,\infty)$ satisfy that $pq_1\leq p_v$. Then the following items hold.
\begin{enumerate}[(i)]
\item\label{k09a} There exists a unique measurable $u\colon[0,T]\times \R^d\to \R$ which satisfies for all
$t\in [0,T]$, $x\in \R^d$ that
$\E[\lvert g(X^{0,t,x}_T)\rvert]
+ \int_{t}^{T} \E [\lvert f(u(s,X^{0,t,x}_s))\rvert]\,ds
+\sup_{y\in \R^d,s\in [0,T]}\frac{\lvert u(s,y)\rvert}{V(s,y)}<\infty
$ and
\begin{align}
u(t,x)= \E [g(X^{0,t,x}_T)]+\int_{t}^{T} \E[f(u(s,X^{0,t,x}_s))]\,ds.
\end{align} 
 \item \label{k09b}
We have for all 
$m,n\in \N$,
$t\in [0,T]$, $x\in \R^d$
\begin{align}
\left\lVert
{U}_{n,m}^{0}(t,x)-u(t,x)\right\rVert_p
\leq 2(p-1)^{\frac{n}{2}}e^{5cTn}e^{m^{p/2}/p}m^{-n/2}V^{q_1}(t,x)
.
\end{align}\end{enumerate}
\end{theorem}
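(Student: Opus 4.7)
The plan is to follow the $L^2$-blueprint (as in the cited HJKN2020, and as reflected in \cref{c02} below), replacing every appeal to Burkholder / $L^2$-orthogonality by the Marcinkiewicz--Zygmund inequality in its $L^p$-form (Rio 2009, Theorem~2.1): for $p\in[2,\infty)$ and independent centered $(X_i)_{i=1}^N$ in $L^p$ one has $\|\sum_i X_i\|_p \le \sqrt{p-1}\,(\sum_i \|X_i\|_p^2)^{1/2}$.

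\emph{Part (i).} For existence and uniqueness I would work on the weighted Banach space $\mathcal V:=\{u\colon[0,T]\times\R^d\to\R \text{ measurable}:\;\|u\|_{\mathcal V}:=\sup_{t,x}|u(t,x)|/V(t,x)<\infty\}$ and define the Picard operator $\Phi(u)(t,x):=\E[g(X^{0,t,x}_T)]+\int_t^T\E[f(u(s,X^{0,t,x}_s))]\,ds$. Integrability of the right hand side follows from \eqref{a14}--\eqref{a14c} together with \eqref{k04b}, and $\Phi$ maps $\mathcal V$ into itself. Equipping $\mathcal V$ with the equivalent norm $\|u\|_{\lambda}:=\sup_{t,x}e^{-\lambda(T-t)}|u(t,x)|/V(t,x)$ for $\lambda$ sufficiently large turns $\Phi$ into a strict contraction; its unique fixed point is the desired $u$, which inherits the at-most $V$-growth bound.

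\emph{Part (ii).} Using that $\mathfrak t^0$ is uniformly distributed on $[0,1]$ rewrite the fixed-point identity as $u(t,x)=\E[g(X^{0,t,x}_T)]+(T-t)\E[(f\circ u)(t+(T-t)\mathfrak t^0,X^{0,t,x}_{t+(T-t)\mathfrak t^0})]$ and compare it term-by-term with \eqref{t27} to obtain the standard MLP error decomposition
\begin{equation*}
U^0_{n,m}(t,x)-u(t,x)=A^{(g)}_n+\sum_{\ell=0}^{n-1}\bigl(A^{(1)}_{n,\ell}+A^{(2)}_{n,\ell}\bigr),
\end{equation*}
in which $A^{(g)}_n$ is the $g$-empirical mean over $m^n$ copies of $g(X^{(0,0,-i),t,x}_T)$ minus its expectation, $A^{(1)}_{n,\ell}$ is the centered empirical mean at level $\ell$ of $m^{n-\ell}$ summands of the shape $(T-t)\bigl[f\circ U^{(0,\ell,i)}_{\ell,m}-\1_\N(\ell) f\circ U^{(0,-\ell,i)}_{\ell-1,m}\bigr](\cdot)$, and $A^{(2)}_{n,\ell}$ is the deterministic bias in which the inner MLP values are compared to $u$. \cref{ppt0} provides exactly the conditional iid-ness required to invoke Marcinkiewicz--Zygmund on $A^{(g)}_n$ and on every $A^{(1)}_{n,\ell}$; combined with \eqref{a14c} and with \eqref{k04b} (where the hypothesis $pq_1\le p_v$ is used to push the weight $V^{q_1}$ through the forward process) this yields pointwise
\begin{equation*}
\|A^{(g)}_n\|_p+\sum_{\ell=0}^{n-1}\|A^{(1)}_{n,\ell}\|_p\;\le\;\sqrt{p-1}\,\Bigl(\frac{C_0}{m^{n/2}}+\sum_{\ell=0}^{n-1}\frac{c\,(T-t)}{m^{(n-\ell)/2}}\bigl(\|U^0_{\ell,m}-u\|_{p,V^{q_1}}+\|U^0_{\ell-1,m}-u\|_{p,V^{q_1}}+C\bigr)\Bigr)V(t,x)^{q_1},
\end{equation*}
while $\|A^{(2)}_{n,\ell}\|_p$ is controlled, via Lipschitzness of $f$ alone, by the same weighted error norms at levels $\ell$ and $\ell-1$.

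Writing $e_n:=\sup_{t,x}\|U^0_{n,m}(t,x)-u(t,x)\|_p/V(t,x)^{q_1}$, the two estimates above yield a Gronwall-type recursion $e_n \le C\sqrt{p-1}\,m^{-n/2} + c\sqrt{p-1}\sum_{\ell=0}^{n-1} m^{(\ell-n)/2}(e_\ell + e_{\ell-1})$. Unrolling this recursion inductively on $n$ produces the factor $(p-1)^{n/2}$ (one $\sqrt{p-1}$ per level), the factor $e^{5cTn}$ coming from the Lipschitz propagation through the $n$ levels, and finally the factor $e^{m^{p/2}/p}$ from bounding the combinatorial sum that survives after full unfolding by $\sum_k (m^{p/2}/p)^k/k!\le \exp(m^{p/2}/p)$. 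The hard part is the bookkeeping of \cref{ppt0} to verify at each step that the summands to which Marcinkiewicz--Zygmund is applied are genuinely independent and centered (paying attention to the doubled index shift $\pm\ell$ used in \eqref{t27}) and then the extraction of the precise multiplicative constants $2$, $5cT$ and $m^{p/2}/p$, rather than the cleaner $L^2$-analogues; the delicate $m$-dependent exponential is ultimately what forces the sub-exponential growth $M_n=\exp(\sqrt{|\ln n|})$ exploited in \cref{c40}.
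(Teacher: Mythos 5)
Your overall architecture is the same as the paper's: a Picard--contraction argument for (i), and for (ii) a bias--variance split, the Marcinkiewicz--Zygmund inequality applied to the centered level-$\ell$ empirical means, a level recursion for a weighted error norm, and a Gronwall-type closing of that recursion. The application of $pq_1\le p_v$ to ``push $V^{q_1}$ through'' via Jensen is also exactly what the paper does. However, the closing step contains a real gap.

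The recursion you write down, $e_n\le C\sqrt{p-1}\,m^{-n/2}+c\sqrt{p-1}\sum_{\ell=0}^{n-1}m^{(\ell-n)/2}(e_\ell+e_{\ell-1})$ with $e_n:=\sup_{t,x}\|U^0_{n,m}(t,x)-u(t,x)\|_p/V^{q_1}(t,x)$, is time-independent: you took the global supremum in $t$ before iterating. This recursion does \emph{not} close to give the claimed $m^{-n/2}$ decay. Substituting an ansatz $e_n\le A\gamma^n m^{-n/2}$ forces $\gamma\gtrsim c\sqrt{p-1}\,T\,m^{1/2}$ (the sum telescopes geometrically and one power of $m^{1/2}$ leaks out because of the shift to $e_{\ell-1}$), so the resulting bound is of the form $e_n\le A\bigl(C\sqrt{p-1}\,T\bigr)^n$ with no decay in $m$; and since $e_0=\tnorm{u}_0\approx 2e^{cT}$ is not small, this is useless. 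Your stated source of the factor $e^{m^{p/2}/p}$ --- ``a combinatorial sum $\sum_k(m^{p/2}/p)^k/k!$'' --- would indeed give it, but no factorial ever appears when you unroll a time-independent level recursion, so this step is disconnected from the recursion you actually wrote.

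What the paper does differently, and what is essential, is that the error norm keeps a time parameter: $\tnorm{H}_s=\sup_{t\in[s,T],x}\|H(t,x)\|_p/V^{q_1}(t,x)$. Because $\mathfrak t^0$ is uniform on $[0,1]$, the identity
\begin{equation*}
(T-t)\bigl\|h(t+(T-t)\mathfrak t^0)\bigr\|_p=(T-t)^{1-1/p}\Bigl[\int_t^T|h(\zeta)|^p\,d\zeta\Bigr]^{1/p}\le(T-s)^{1-1/p}\Bigl[\int_s^T|h(\zeta)|^p\,d\zeta\Bigr]^{1/p}
\end{equation*}
turns the $\mathfrak t^0$-expectation in each level into an $L^p$-integral over the time interval $[s,T]$, yielding the recursion
\begin{equation*}
\tnorm{U^0_{n,m}-u}_s\le\frac{2\sqrt{p-1}}{m^{n/2}}+\sum_{\ell=0}^{n-1}\frac{4\sqrt{p-1}\,c\,(T-s)^{1-1/p}}{m^{(n-\ell-1)/2}}\Bigl[\int_s^T\tnorm{U^0_{\ell,m}-u}_\zeta^p\,d\zeta\Bigr]^{1/p}.
\end{equation*}
Iterating this time integral is what produces the factorial gain $T^\ell/(\ell!)^{1/p}$ at level $\ell$; balancing it against the loss $m^{(n-\ell)/2}$ (Stirling: $\sup_\ell m^{\ell/2}/(\ell!)^{1/p}\approx e^{m^{p/2}/p}$) yields exactly the $e^{m^{p/2}/p}$ factor while preserving the full $m^{-n/2}$ rate. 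The paper outsources this closing step to \cite[Lemma~3.11]{HJKN2020}. To repair your proof, keep the time variable $s$ in the norm through the whole induction and invoke the time-integral substitution before taking suprema; the rest of your sketch then goes through unchanged.
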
\begin{proof}
[Proof of \cref{k09}]
For every random field $H\colon [0,T]\times \R^d\times\Omega\to\R$ and every $s\in [0,T]$ let $\tnorm{H}_s\in [0,\infty]$ satisfy that
\begin{align}
\tnorm{H}_s=\sup_{t\in [s,T],x\in\R^d}\frac{\lVert H(t,x)\rVert_p}{(V(t,x))^{q_1}}.\label{k01}
\end{align}Furthermore, for every 
random variable $\mathfrak{X}\colon \Omega\to \R$ with $\E [\lvert \mathfrak{X}\rvert]<\infty$ let $\var_{p}(\mathfrak{X})\in[0,\infty]$ satisfy that
\begin{align}
\var_{p}(\mathfrak{X})=\lVert\mathfrak{X}-\E[\mathfrak{X}]\rVert_{p}^2.
\end{align}

First,
measurability and
\cite[Proposition~2.2]{HJKN2020} (applied with $d\gets d$,
$T\gets T$,
$L\gets c$,
$\mathcal{O}\gets \R^d$,
$(X^x_{t,s})_{t\in [0,T],s\in [t,T],x\in \R^d}\gets 
(X^{0,t,x}_{s})_{t\in [0,T],s\in [t,T],x\in \R^d}
$,
$f\gets ([0,T]\times \R^d\times \R\ni (t,x,w)\mapsto f(w)\in \R)$, $g\gets g$, $V\gets V$
in the notation of 
\cite[Proposition~2.2]{HJKN2020})
show that there exists a unique measurable $u\colon[0,T]\times \R^d\to \R$ which satisfies for all
$t\in [0,T]$, $x\in \R^d$ 
that 
$\E[\lvert g(X^{0,t,x}_T)\rvert]
+ \int_{t}^{T} \E [\lvert f(u(s,X^{0,t,x}_s))\rvert]\,ds
+\sup_{y\in \R^d,s\in [0,T]}\frac{\lvert u(s,y)\rvert}{V(s,y)}<\infty
$ and 
\begin{align}
u(t,x)= \E [g(X^{0,t,x}_T)]+\int_{t}^{T} \E[f(u(s,X^{0,t,x}_s))]\,ds
\end{align} 
and we have for all $t\in [0,T]$, $x\in \R^d$ that  $\frac{\lvert u(t,x)\rvert}{V(t,x)}\leq 2e^{c(T-t)}$.
This, the fact that $q_1\geq 1$, and \eqref{k01} imply for all $s\in [0,T]$ that
\begin{align}
\tnorm{u}_s\leq 2e^{cT}.\label{k04}
\end{align}
This proves \eqref{k09a}.

Next, Jensen's inequality, the fact that $p\leq p_v$, \eqref{k04b}, and the fact that $V\leq V^{q_1}$ show for all $t\in [0,T]$, $x\in \R^d$ that
\begin{align}
\lVert g(X^{0,t,x}_T) \rVert_{p}\leq 
\lVert V(T,X^{0,t,x}_T) \rVert_{p}
\leq \lVert V(T,X^{0,t,x}_T) \rVert_{p_v}
\leq V(t,x)\leq V^{q_1}(t,x).\label{k03} 
\end{align}
Next, the disintegration theorem, the measurability and independence properties, the fact that
$pq_1\leq p_v$,
and Jensen's inequality
prove for all $t\in [0,T]$, $\ell,\nu \in \N_0$, $x\in \R^d$,
$H\in \mathrm{span}_\R (\{f\circ U^{\nu}_{\ell,m},f\circ u\})$ that
\begin{align} \begin{split} 
\left\lVert
(T-t)H(t+(T-t) \mathfrak{t}^0, X^{0,t,x}_{t+(T-t)\mathfrak{t}^0} )
\right\rVert_p&=
(T-t)\left\lVert\left\lVert
\left\lVert
H(r,y)
\right\rVert_p\big|_{y=X^{0,t,x}_r}\right\rVert_p\Bigr|_{r=t+(T-t)\mathfrak{t}^0}
\right\rVert_p\\
&\leq 
(T-t) \left\lVert\left[\tnorm{H}_{r}\left\lVert
V^{q_1}(r,X_r^{0,t,x})
\right\rVert_p\right]\Bigr|_{r=t+(T-t)\mathfrak{t}^0}\right\rVert_p\\
&\leq (T-t) \left\lVert\tnorm{H}_{t+(T-t)\mathfrak{t}^0}\right\rVert_p V^{q_1}(t,x)\label{k02}.\end{split}
\end{align}
Moreover, \eqref{k01} and \eqref{a14c} show for all $t\in [0,T]$ and all random fields $H,K\colon [0,T]\times\R^d\times\Omega\to \R$ that
$\tnorm{(f\circ H)-(f\circ K)}_t\leq c\tnorm{H-K}_t$. This,  
\eqref{k02}, and the independence and distributional properties imply for all $t\in [0,T]$, $\nu,\ell\in \N_0$, $m,n\in \N$, $x\in \R^d$ that
\begin{align} \begin{split} 
&\left\lVert
(T-t)((f\circ U^\nu_{\ell,m})-(f\circ u))
(t+(T-t)\mathfrak{t}^0, X^{0,t,x}_{t+(T-t)\mathfrak{t}^0})
\right\rVert_p
\\
&
\leq (T-t)\left\lVert\tnorm{(f\circ U^{\nu}_{\ell,m}) -(f\circ u)}_{t+(T-t)\mathfrak{t}^0}\right\rVert_pV^{q_1}(t,x)\\
&\leq (T-t)c\left\lVert\tnorm{U^0_{\ell,m}-u}_{t+(T-t)\mathfrak{t}^0}\right\rVert_pV^{q_1}(t,x).
\end{split}\label{k06}
\end{align}
This and the triangle inequality
show for all $t\in [0,T]$, $x\in \R^d$, $m,\ell\in \N$ that
\begin{align} \begin{split} 
&
\left\lVert
(T-t)\left[
((f\circ U^0_{\ell,m})-(f\circ U^1_{\ell-1,m}))
(t+(T-t)\mathfrak{t}^0, X^{0,t,x}_{t+(T-t)\mathfrak{t}^0})
\right]
\right\rVert_p\\
&
\leq 
\left\lVert
(T-t)\left[
((f\circ U^0_{\ell,m})-(f\circ u))
(t+(T-t)\mathfrak{t}^0, X^{0,t,x}_{t+(T-t)\mathfrak{t}^0})
\right]
\right\rVert_p\\
&\quad 
+
\left\lVert
(T-t)\left[
((f\circ U^1_{\ell-1,m})-(f\circ u))
(t+(T-t)\mathfrak{t}^0, X^{0,t,x}_{t+(T-t)\mathfrak{t}^0})
\right]
\right\rVert_p\\
&\leq 
\sum_{j=\ell-1}^\ell \left[
(T-t)c\left\lVert \tnorm{U_{j,m}^0-u}_{t+(T-t)\mathfrak{t}^0}\right\rVert_p
\right]
V^{q_1}(t,x).
\end{split}\label{k05}
\end{align}
This, \eqref{t27}, the triangle inequality,
the fact that $\forall\,m\in \N\colon U^{0}_{0,m}=0$, the independence and distributional properties, 
 \eqref{k03}, \eqref{k04}, \eqref{k05},
and induction prove for all $n\in \N_0$, $m\in \N$, $x\in \R^d$, $t\in [0,T]$, $\theta\in \Theta$ that
\begin{align}
\tnorm{U^\theta_{n,m}}_{t}+
\left\lVert
(T-t)(f\circ U^\theta_{n,m})(t+(T-t)\mathfrak{t}^\theta, X^{\theta,t,x}_{t+(T-t)\mathfrak{t}^\theta})
\right\rVert_p<\infty.
\end{align}
Next, linearity, the independence and distributional properties, and a telescoping sum argument prove for all
$n,m\in \N$, $t\in [0,T]$, $x\in \R^d$ that
\begin{align} \begin{split} 
\label{t27b}
& \E [ {U}_{n,m}^{0}(t,x)]
=  \frac{1}{m^n}\sum_{i=1}^{m^n}
\E [g(X^{(0,0,-i),t,x}_T)]\\
&+
\sum_{\ell=0}^{n-1}\frac{T-t}{m^{n-\ell}}
    \sum_{i=1}^{m^{n-\ell}}\E
\biggl[
     \bigl( f\circ {U}_{\ell,m}^{(0,\ell,i)}
-\1_{\N}(\ell)
f\circ {U}_{\ell-1,m}^{(0,-\ell,i)}
\bigr)\!
\left(t+(T-t)\mathfrak{t}^{(0,\ell,i)},X_{t+(T-t)\mathfrak{t}^{(0,\ell,i)}}^{(0,\ell,i),t,x}\right)\biggr]\\
&= \E [g(X_T^{0,t,x})]+\sum_{\ell=0}^{n-1}(T-t)\Biggl[
\E\!\left[ (f\circ U^{0}_{\ell,m})(t+(T-t)\mathfrak{t}^0, X^{0,t,x}_{t+(T-t)\mathfrak{t}^0})\right]\\
&\qquad\qquad\qquad\qquad\qquad\qquad-\E\!\left[
(f\circ U^{0}_{\ell-1,m})(t+(T-t)\mathfrak{t}^0, X^{0,t,x}_{t+(T-t)\mathfrak{t}^0})\right]\Biggr]\\
&= \E [g(X_T^{0,t,x})]+(T-t)
\E\!\left[ (f\circ U^{0}_{n-1,m})(t+(T-t)\mathfrak{t}^0, X^{0,t,x}_{t+(T-t)\mathfrak{t}^0})\right]
.\end{split}
\end{align}
Moreover, the disintegration theorem and the independence and distributional properties show for all $t\in [0,T]$, $x\in \R^d$ that
\begin{align}
u(t,x)= \E\!\left [g(X^{0,t,x}_T)\right]+ (T-t)\E \!\left[(f\circ u) (t+(T-t)\mathfrak{t}^0, X^{0,t,x}_{t+(T-t)\mathfrak{t}^0})\right].
\end{align}
This, the triangle inequality, \eqref{t27b}, Jensen's inequality, and \eqref{k06} prove for all 
$n,m\in \N$, $t\in [0,T]$, $x\in \R^d$ 
that
\begin{align}
\frac{\left\lvert\E [U^0_{n,m}(t,x) ]-u(t,x)\right\rvert}{V^{q_1}(t,x)}\leq (T-t)c\left\lVert
\tnorm{U^0_{n-1,m}-u}_{t+(T-t)\mathfrak{t}^0}
\right\rVert_p.\label{k07}
\end{align}
Moreover, 
the Marcinkiewicz-Zygmund inequality (see \cite[Theorem~2.1]{Rio2009}), the fact that $p\in [2,\infty)$, the triangle inequality, and  Jensen's inequality show  for all $n\in\N$ and all identically distributed and independent random variables $\mathfrak{X}_k$, $k\in [1,n]\cap\Z$, with $\E [\lvert\mathfrak{X}_1\rvert]<\infty$ that
\begin{align} \begin{split} 
\left(\var_{p}\left[\frac{1}{n}\sum_{k=1}^{n}\mathfrak{X}_k\right]\right)^{1/2}&=\frac{1}{n}
\left\lVert \sum_{k=1}^{n}(\mathfrak{X}_k-\E[\mathfrak{X}_k])\right\rVert_{p}\leq \frac{\sqrt{p-1}}{n}
\left(\sum_{k=1}^{n}\left\lVert \mathfrak{X}_k-\E[\mathfrak{X}_k]\right\rVert_p^2\right)^{\frac{1}{2}}\\&\leq  \frac{2\sqrt{p-1}\lVert \mathfrak{X}_1\rVert_{p}}{\sqrt{n}}.\end{split}
\end{align} This, \eqref{t27},
the triangle inequality, the independence and distributional properties, \eqref{k03},  and \eqref{k05} show for all
$n,m\in \N$, $t\in [0,T]$, $x\in \R^d$ that
\begin{align}\label{k08}
 \begin{split} 
&
\frac{\left\lVert
U^0_{n,m}(t,x)-\E[U^0_{n,m}(t,x)]
\right\rVert_p}{V^{q_1}(t,x)}= 
\frac{\left(\var_p( U^0_{n,m}(t,x) )\right)^{\frac{1}{2}}}{V^{q_1}(t,x)}\\
&\leq \frac{\left(
\var_p \!\left[\frac{1}{m^n}\sum_{i=1}^{m^n}
g(X^{(0,0,-i),t,x}_T)\right]\right)^\frac{1}{2}}{V^{q_1}(t,x)}
\\
&\quad +
\sum_{\ell=0}^{n-1}\frac{\left(\var_p\!\left[\frac{T-t}{m^{n-\ell}}
    \sum_{i=1}^{m^{n-\ell}}
     \bigl( f\circ {U}_{\ell,m}^{(0,\ell,i)}
-\1_{\N}(\ell)
f\circ {U}_{\ell-1,m}^{(0,-\ell,i)}
\bigr)\!
\left(t+(T-t)\mathfrak{t}^{(0,\ell,i)},X_{t+(T-t)\mathfrak{t}^{(0,\ell,i)}}^{(0,\ell,i),t,x}\right)
\right]\right)^\frac{1}{2}}{V^{q_1}(t,x)}\\
&\leq \frac{\frac{2\sqrt{p-1}\left\lVert g(X^{0,t,x}_T)\right\rVert_p}{\sqrt{m^n}}
+\sum_{\ell=1}^{n-1}
\frac{2\sqrt{p-1}\left\lVert(T-t)
 \bigl( f\circ {U}_{\ell,m}^{0}
-\1_{\N}(\ell)
f\circ {U}_{\ell-1,m}^{1}
\bigr)\!
\left(t+(T-t)\mathfrak{t}^{0},X_{t+(T-t)\mathfrak{t}^{0}}^{0,t,x}\right)
\right\rVert_p}{\sqrt{m^{n-\ell}}}
}{V^{q_1}(t,x)}\\
&
\leq \frac{2\sqrt{p-1}}{\sqrt{m^n}}
+\sum_{\ell=1}^{n-1}
\sum_{j=\ell-1}^\ell\frac{2\sqrt{p-1}}{\sqrt{m^{n-\ell}}} \left[
(T-t)c\left\lVert \tnorm{U_{j,m}^0-u}_{t+(T-t)\mathfrak{t}^0}\right\rVert_p
\right].
\end{split}
\end{align}
In addition,
the fact that $\mathfrak{t}^0$ is uniformly distributed on $[0,1]$ and  the substitution rule imply for all
$s\in[0,T]$, $t\in [0,T]$, and all measurable $h\colon [0,T]\to \R$ that
\begin{align}\label{b04} \begin{split} 
&(T-t)\left\lVert h(t+(T-t)\mathfrak{t}^0)\right\rVert_{p}
= (T-t)^{1-\frac{1}{p}}\left[\int_{0}^{1}(T-t)
\lvert h(t+(T-t)\lambda)\rvert^{p}\,d\lambda\right]^{\frac{1}{p}}  \\
&= 
 (T-t)^{1-\frac{1}{p}}
\left[\int_{t}^{T}
\lvert h(\zeta)\rvert^{p}\,d\zeta\right]^{\frac{1}{p}}\leq 
 (T-s)^{1-\frac{1}{p}}
\left[\int_{s}^{T}
\lvert h(\zeta)\rvert^{p}\,d\zeta\right]^{\frac{1}{p}} 
.\end{split}
\end{align}
This, \eqref{k01}, the triangle inequality, \eqref{k07},
\eqref{k08}, and
the fact that $\forall\,n,m\in \N$, $a_0,a_1,\ldots,a_{n-1}\in[0,\infty)\colon 
(
\sum_{\ell=1}^{n-1}\sum_{j=\ell-1}^\ell\frac{ a_j}{\sqrt{ m^{n-\ell}}}
)
+a_{n-1}
\leq \sum_{\ell=0}^{n-1}\frac{2a_\ell}{\sqrt{ m^{n-\ell-1}}}
$ prove for all $n,m\in \N$, $s\in [0,T]$ that
\begin{align} \begin{split} 
\tnorm{{U}_{n,m}^{0}-u}_{s}
&=
\sup_{ t\in[s,T]}\frac{
\left\lVert {U}_{n,m}^{0}(t,x)-u(t,x)\right\rVert_{p}}{V^{q_1}(t,x)} 
\\
&\leq \sup_{ t\in[s,T]}
\frac{\left\lVert 
{U}_{n,m}^{0}(t,x)-
\E [{U}_{n,m}^{0}(t,x)]
\right\rVert_{p}+
\bigl\lvert\E \bigl[  {U}_{n,m}^{0}(t,x)\bigr]-u(t,x)\bigr\rvert}{V^{q_1}(t,x)}
 \\
&\leq \sup_{t\in [s,T]}\Biggl[
\frac{2\sqrt{p-1}}{\sqrt{m^n}}+
\sum_{\ell=1}^{n-1}\sum_{j=\ell-1}^{\ell}\left[
\frac{2\sqrt{p-1}}{\sqrt{m^{n-\ell}}}
(T-t) c\left\lVert \tnorm{ {U}_{j,m}^{0}- u}_{t+(T-t)\mathfrak{t}^{0}}\right\rVert_{p}
\right] 
\\
&\qquad\qquad\qquad+
 (T-t)c\left\lVert \tnorm{ {U}_{n-1,m}^{0}- u}_{t+(T-t)\mathfrak{t}^{0}}\right\rVert_{p}
\Biggr] \\
&\leq 
\sup_{t\in [s,T]}\left[
\frac{2\sqrt{p-1}}{\sqrt{m^n}}+
\sum_{\ell=0}^{n-1}\left[
\frac{4\sqrt{p-1}}{\sqrt{m^{n-\ell-1}}}
(T-t) c\left\lVert \tnorm{ {U}_{\ell,m}^{0}- u}_{t+(T-t)\mathfrak{t}^{0}}\right\rVert_{p} 
\right]\right] \\
&\leq 
\frac{2\sqrt{p-1}}{\sqrt{m^n}}+
\sum_{\ell=0}^{n-1}\left[
\frac{4\sqrt{p-1}}{\sqrt{m^{n-\ell-1}}}
(T-s)^{1-\frac{1}{p}} c\left[\int_{s}^{T} \tnorm{ {U}_{\ell,m}^{0}- u}_{\zeta}^{p}\, d\zeta\right]^{\frac{1}{p}} 
\right].\end{split}
\end{align}
Next, 
\cite[Lemma~3.11]{HJKN2020} (applied
for every $s\in[0,T] $, $n,m\in\N $ with
$M\gets m$,
$N\gets n$, $\tau\gets s$, 
$a\gets 2\sqrt{p-1}$,
$b\gets 4(T-s)^{1-\frac{1}{p}} c\sqrt{p-1}$,  
$(f_j)_{j\in\N_0}\gets 
([s,T]\ni t\mapsto \tnorm{{U}_{j,m}^{0}-u}_{t}\in[0,\infty])_{j\in\N_0}
$ 
in the notation of \cite[Lemma~3.11]{HJKN2020}), \eqref{k04}, and
the fact that $\forall\,m\in \N\colon U_{0,m}^0=0$
prove for all $m,n\in\N$, $s\in[0,T]$ that
\begin{align} \begin{split} 
\tnorm{{U}_{n,m}^{0}-u}_{s} &\leq  
\left(
2\sqrt{p-1}+
4(T-s)^{1-\frac{1}{p}} c\sqrt{p-1}\cdot (T-s)^{\frac{1}{p}}\cdot \sup_{t\in [s,T]}\tnorm{u}_{t}
\right)  \\&\qquad\qquad\qquad\qquad\cdot 
e^{m^{p/2}/p}m^{-n/2}
 \left(1+4(T-s)^{1-\frac{1}{p}} c\sqrt{p-1}\cdot (T-s)^{\frac{1}{p}}\right)^{n-1}  \\
&\leq \sqrt{p-1}\left(2+4cT\cdot 2e^{cT}\right)e^{m^{p/2}/p}m^{-n/2} \left(\sqrt{p-1}(1+4cT)\right)^{n-1}  \\
&\leq 2(p-1)^{\frac{n}{2}}e^{cT}(1+4cT)e^{m^{p/2}/p}m^{-n/2} (1+4cT)^{n-1}\\
&\leq 2(p-1)^{\frac{n}{2}}e^{5cTn}e^{m^{p/2}/p}m^{-n/2}.\end{split}
\end{align}
This and \eqref{k01} imply
for all 
$m,n\in \N$,
$t\in [0,T]$, $x\in \R^d$ that
\begin{align}
\left\lVert
{U}_{n,m}^{0}(t,x)-u(t,x)\right\rVert_p
\leq 2(p-1)^{\frac{n}{2}}e^{5cTn}e^{m^{p/2}/p}m^{-n/2}V^{q_1}(t,x)
.\end{align}
This completes the proof of \cref{k09}.
\end{proof}

\subsection{Error bounds for MLP approximations involving Euler-Maruyama approximations}
\cref{c02} below extends \cite[Proposition~4.1]{HJKN2020} to an $L^\mathfrak{p}$-estimate, $\mathfrak{p}\in [2,\infty)$. Its proof can be easily adapted from that of 
\cite[Proposition~4.1]{HJKN2020}. However, we present it here for convenience of the reader.
\begin{lemma}\label{c02}
Let $d,K\in\N$,
$T\in (0,\infty)$, $\mathfrak{p}\in [2,\infty)$,
  $\beta,b,c\in [1,\infty)$,  
$p\in[\mathfrak{p}\beta,\infty)$, 
$\varphi\in C^2(\R^d, [1,\infty))$,
$g\in C( \R^d,\R)$, $f\in C(\R,\R)$,
$\mu\in C( \R^{d} ,\R^{d})$, $\sigma\in C( \R^{d},\R^{d\times d})$. 
Assume for all $x,y\in \R^d$, $z\in \R^d\setminus\{0\}$, $t\in [0,T]$, $v,w\in \R$ that
\begin{align}
\max\!\left\{
\frac{\lvert(\varphi'(x))(z)\rvert}{(\varphi(x))^\frac{p-1}{p}\lVert z\rVert},
\frac{(\varphi''(x))(z,z)}{(\varphi(x))^\frac{p-2}{p}\lVert z\rVert^2},
\frac{c\lVert x\rVert+\lVert\mu(0)\rVert}{(\varphi(x))^\frac{1}{p}},
\frac{c\lVert x\rVert+\lVert\sigma(0)\rVert}{(\varphi(x))^\frac{1}{p}}
\right\}\leq c,\label{c08}
\end{align}
\begin{align}
\max \{\lvert Tf(0)\rvert, \lvert g(x)\rvert\}\leq b (\varphi(x))^\frac{\beta}{p},\label{v04b}
\end{align}
\begin{align}
\lvert
g(x)-g(y)\rvert\leq b\frac{(\varphi(x)+\varphi(y))^\frac{\beta}{p}}{\sqrt{T}}
\lVert x-y\rVert
,\quad \lvert f(v)-f(w)\rvert\leq c\lvert v-w\rvert,\label{v07}
\end{align}
\begin{align}\label{c03}
\max\{ \lVert \mu(x)-\mu(y)\rVert,
\lVert \sigma(x)-\sigma(y)\rVert\}\leq c\lVert x-y\rVert.
\end{align}
Let $(\Omega,\mathcal{F},\P, (\F_t)_{t\in[0,T]})$ be a filtered probability space which satisfies the usual conditions.
Let 
$  \Theta = \bigcup_{ n \in \N }\! \Z^n$.
Let $\mathfrak{t}^\theta\colon \Omega\to[0,1]$, $\theta\in \Theta$, be identically distributed and independent random variables. Assume for all $t\in(0,1)$ that $\P(\mathfrak{t}^0\leq t)=t$. Let $W^\theta\colon [0,T]\times\Omega \to \R^{d}$, $\theta\in\Theta$, be independent standard $(\F_{t})_{t\in[0,T]}$-Brownian motions. 
Assume that
$(\mathfrak{t}^\theta)_{\theta\in\Theta}$ and
$(W^\theta)_{\theta\in\Theta}$ are independent. Let
$\rdown{\cdot}_K\colon \R\to \R$ satisfy for all $t\in \R$
that $\rdown{t}_K=\max ( \{0,\frac{T}{K},\ldots,\frac{(K-1)T}{T},T\}\cap ((-\infty,t)\cup\{0\}) )$. For every $\theta\in \Theta$, $t\in[0,T]$, $x\in \R^d$
let $Y^{\theta,t,x}= (Y^{\theta,t,x}_s)_{s\in [t,T]}\colon [t,T]\times \Omega\to \R^d$ satisfy for all $s\in [t,T]$ that
$Y^{\theta,t,x}_t=x$ and
\begin{align}
Y^{\theta,t,x}_s=Y^{\theta,t,x}_{\max\{t,\rdown{s}_K\}}
+\mu(Y^{\theta,t,x}_{\max\{t,\rdown{s}_K\}})
(s-\max \{t,\rdown{s}_K\})
+
\sigma(Y^{\theta,t,x}_{\max\{t,\rdown{s}_K\}})
(W^\theta_s-W^\theta_{\max \{t,\rdown{s}_K\}}).\label{c09}
\end{align}
Let $U^\theta_{n,m}\colon [0,T]\times \R^d\times \Omega\to \R$, $n\in \Z$, $m\in \N$, $\theta\in \Theta$, satisfy for all
$\theta\in \Theta$, $m\in \N$, $n\in \N_0$, $t\in [0,T]$, $x\in \R^d$ that
$
{U}_{-1,m}^{\theta}(t,x)={U}_{0,m}^{\theta}(t,x)=0$ and
\begin{align} \begin{split} 
\label{t27c}
&  {U}_{n,m}^{\theta}(t,x)
=  \frac{1}{m^n}\sum_{i=1}^{m^n}
g(Y^{(\theta,0,-i),t,x}_T)\\
&+
\sum_{\ell=0}^{n-1}\frac{T-t}{m^{n-\ell}}
    \sum_{i=1}^{m^{n-\ell}}
     \bigl( f\circ {U}_{\ell,m}^{(\theta,\ell,i)}
-\1_{\N}(\ell)
f\circ {U}_{\ell-1,m}^{(\theta,-\ell,i)}
\bigr)\!
\left(t+(T-t)\mathfrak{t}^{(\theta,\ell,i)},Y_{t+(T-t)\mathfrak{t}^{(\theta,\ell,i)}}^{(\theta,\ell,i),t,x}\right).\end{split}
\end{align}
Then the following items are true.
\begin{enumerate}[(i)]
\item \label{c04}For every $t\in [0,T]$, $\theta\in \Theta$ there exists an up to indistinguishability unique continuous
random field
$X^{\theta,t,\cdot }=(X^{\theta,t,x}_s)_{s\in [t,T],x\in \R^d}
\colon [t,T]\times \R^d\times \Omega\to \R^d
$ which satisfies  for all $x\in \R^d$
that 
$
(X^{\theta,t,x}_s)_{s\in [t,T]}
$
is $(\F_s)_{s\in [t,T]}$-adapted and which satisfies  for all $s\in [t,T]$, $x\in \R^d$  that $\P$-a.s.\
\begin{align}
X^{\theta,t,x}_s=x+\int_{t}^{s}\mu (X^{\theta,t,x}_r)\,dr
+\int_t^s\sigma(X^{\theta,t,x}_r)\,dW^\theta_r.
\end{align}
\item\label{c05} For all $\theta\in \Theta$, $t\in [0,T]$, $s\in [t,T]$, $r\in [s,T]$, $x\in \R^d$
that
$\P(X^{\theta,s,X^{\theta,t,x}_s}_r= X^{\theta,t,x}_r)=1.$
\item \label{k21}There exists a unique
measurable
$u\colon [0,T]\times\R^d\to\R$ which satisfies
for all $t\in[0,T]$, $x\in \R^d$ that
$
\bigl(\sup_{s\in[0,T],y\in\R^d} [{\lvert u(s,y)\rvert}{(\varphi(y))^{-\beta /p}}]\bigr)+\int_{t}^{T}\E\bigl[\lvert f(u(s,X_{s}^{0,t,x}))\rvert \bigr]\,ds
+
\E\bigl[\lvert g(X^{0,t,x}_{T})\rvert\bigr]  < \infty$
and\begin{align}
u(t,x)=\E\!\left[g(X^{0,t,x}_{T})\right]+\int_{t}^{T}\E\!\left[f(u(s,X_{s}^{0,t,x}))\right]ds .
\end{align}
\item\label{c06} For all $t\in [0,T]$, $x\in \R^d$, $n\in \N_0$, $m\in \N$ we have that $U^\theta_{n,m}(t,x)$ is measurable.
\item\label{c07} For all
$t\in [0,T]$, $x\in \R^d$, $m,n\in \N$ we have that
\begin{align} \begin{split} 
&\left\lVert
{U}_{n,m}^{0}(t,x)-u(t,x)\right\rVert_\mathfrak{p}\leq  12bc^2e^{9c^3T}
(\varphi(x))^{\frac{\beta+1}{p}}\left[
2\mathfrak{p}^{\frac{n}{2}}e^{5cTn}e^{m^{\mathfrak{p}/2}/\mathfrak{p}}m^{-n/2}+\frac{1}{\sqrt{K}}
\right].
\end{split}
\end{align}

\end{enumerate}
\end{lemma}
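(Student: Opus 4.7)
Items (i) and (ii) are standard: the coefficients $\mu,\sigma$ are globally Lipschitz by \eqref{c03} and of linear growth by \eqref{c08}, so classical SDE theory yields a unique continuous adapted strong solution enjoying the flow property. Item (iv) is immediate by induction on $n$ using joint measurability of $Y^{\theta,t,x}$ and the independence structure of the $\mathfrak t^\theta$. Item (iii) follows from the abstract existence result \cite[Proposition~2.2]{HJKN2020} (as already invoked in the proof of \cref{k09}), with Lyapunov function a suitable power of $\varphi$; the required moment bound on $\varphi(X^{0,t,x}_s)$ is obtained from It\^o's formula applied to $\varphi$, the derivative estimates \eqref{c08} ensuring that the drift and It\^o correction are dominated by $\varphi$ itself.

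For (v), the plan is to interpose the SFPE driven by the Euler--Maruyama process $Y$ rather than by $X$. Let $u^Y\colon [0,T]\times\R^d\to\R$ satisfy
\begin{align*}
u^Y(t,x) = \E\!\left[g(Y^{0,t,x}_T)\right] + \int_t^T \E\!\left[f(u^Y(s,Y^{0,t,x}_s))\right]\,ds,
\end{align*}
whose existence and uniqueness again follow from \cite[Proposition~2.2]{HJKN2020} after verifying moment bounds of the form $\E[\varphi(Y^{0,t,x}_s)^r]\lesssim \varphi(x)^r$; the latter is derived by applying It\^o's formula to $\varphi$ on each subinterval of the Euler--Maruyama grid and invoking \eqref{c08} with discrete Gronwall. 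I then decompose
\begin{align*}
U^0_{n,m}(t,x) - u(t,x) = \bigl(U^0_{n,m}(t,x) - u^Y(t,x)\bigr) + \bigl(u^Y(t,x) - u(t,x)\bigr).
\end{align*}

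The first summand is exactly what \cref{k09} bounds, applied with driving process $Y$ in place of $X$, with $p\gets \mathfrak p$, and with Lyapunov weight $V\gets\varphi^{(\beta+1)/p}$, $q_1\gets 1$, $p_v\gets p$; the constraint $pq_1\le p_v$ of \cref{k09} then reduces to $\mathfrak p\le p$, while the matching $\mathfrak p$-moment control $\lVert g(Y_T)\rVert_\mathfrak p\le b\lVert\varphi(Y_T)^{\beta/p}\rVert_\mathfrak p$ requires precisely the standing hypothesis $\mathfrak p\beta\le p$ (so that $\mathfrak p\beta$-th moments of $\varphi(Y)$ are finite and controlled by $\varphi(x)^{\mathfrak p\beta/p}$). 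This produces the $2\mathfrak p^{n/2}e^{5cTn}e^{m^{\mathfrak p/2}/\mathfrak p}m^{-n/2}$ factor together with the $(\varphi(x))^{(\beta+1)/p}$ prefactor. For the second summand I subtract the two SFPEs, insert the intermediate quantity $f(u(s,Y^{0,t,x}_s))$, use the Lipschitz bounds \eqref{v07}, and combine them with the standard $L^\mathfrak p$ Euler--Maruyama strong error $\lVert Y^{0,t,x}_s-X^{0,t,x}_s\rVert_\mathfrak p\lesssim \varphi(x)^{1/p}/\sqrt{K}$ (itself a consequence of \eqref{c03} and the Lyapunov moment bound). A Gronwall-type iteration then delivers $\lVert u^Y(t,x)-u(t,x)\rVert_\mathfrak p \lesssim (\varphi(x))^{(\beta+1)/p}/\sqrt{K}$ with exponential constant of order $e^{9c^3T}$, contributing the $1/\sqrt{K}$ term in the claimed estimate.

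The main obstacle is the second bracket: one has to combine the joint $\varphi$-weighting in the Lipschitz bound $|g(x)-g(y)|\le b(\varphi(x)+\varphi(y))^{\beta/p}\lVert x-y\rVert/\sqrt T$ with the weighted $L^\mathfrak p$ moment control on both $X$ and $Y$ in such a way that the final $x$-dependence collapses exactly to $(\varphi(x))^{(\beta+1)/p}$, while simultaneously packaging all Gronwall-type factors into the single prefactor $12bc^2e^{9c^3T}$. The Monte-Carlo half is essentially a direct appeal to \cref{k09}; the discretization half is where the actual work lies, and is what forces the detailed exponent bookkeeping dictated by \eqref{c08} and the condition $p\ge \mathfrak p\beta$.
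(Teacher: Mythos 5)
Your overall plan is the same as the paper's: (i)--(ii) from Lipschitz coefficients, (iii) and the interposed SFPE $u^Y$ from \cite[Proposition~2.2]{HJKN2020} with $\varphi$-moment bounds coming from It\^o's formula and \eqref{c08}, and (v) via the decomposition $U^0_{n,m}-u=(U^0_{n,m}-u^Y)+(u^Y-u)$, the first piece by \cref{k09} and the second by a pre-packaged SFPE perturbation bound (\cite[Lemma~2.3]{HJKN2020}) fed by the Euler--Maruyama strong error. This is precisely the paper's decomposition with $u_1=u^Y$ and $u_0=u$.

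However, the concrete parameter choices you propose for the \cref{k09} application do not work, and they matter. You take $V\gets\varphi^{(\beta+1)/p}$ and $p_v\gets p$. Condition \eqref{k04b} then asks for $\lVert \varphi(Y^{0,t,x}_s)^{(\beta+1)/p}\rVert_p\leq \varphi(x)^{(\beta+1)/p}$, i.e.\ $\E[\varphi(Y^{0,t,x}_s)^{\beta+1}]\lesssim\varphi(x)^{\beta+1}$. But \eqref{c08} and It\^o only give the \emph{first} $\varphi$-moment bound $\E[\varphi(Y^{0,t,x}_s)]\leq e^{2c^3(s-t)}\varphi(x)$, and Jensen then controls only $\E[\varphi^{q/p}]$ for $q/p\leq 1$ (this is exactly the paper's display \eqref{r06}); it cannot produce $(\beta+1)$-th moments. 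Taking instead $p_v\gets\mathfrak p$ does not save the exponent either, since $(\beta+1)\mathfrak p/p$ can exceed $1$ (e.g.\ $\beta=1$, $p=\mathfrak p\beta$). The standing hypothesis $p\geq\mathfrak p\beta$ is calibrated exactly for the weight $V\propto\varphi^{\beta/p}$ with $p_v\gets\mathfrak p$ and $q_1\gets 1$: then \eqref{k04b} reduces to $\E[\varphi^{\mathfrak p\beta/p}]\leq(\cdots)\varphi^{\mathfrak p\beta/p}$ with exponent $\mathfrak p\beta/p\leq 1$, which follows from the first-moment bound by Jensen. (One also needs the time-dependent factor $e^{2c^3\beta(T-t)/p}$ in $V$, otherwise \eqref{k04b} only holds up to a multiplicative $e^{2c^3\beta(s-t)/p}$.) The paper's \cref{k09} application hence yields the $(\varphi(x))^{\beta/p}$ prefactor, not $(\varphi(x))^{(\beta+1)/p}$; the exponent $(\beta+1)/p$ only appears in the discretization term $|u_1-u_0|$ from \cite[Lemma~2.3]{HJKN2020}, and the two exponents are unified at the very end by $\varphi\geq 1$. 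So the exponent bookkeeping you correctly identify as the delicate point is resolved differently than you suggest: you should not try to force the MLP term itself to produce the $(\beta+1)/p$ exponent.

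A minor inaccuracy: the Euler--Maruyama strong error in the paper is established in $L^2$ (feeding \cite[Lemma~2.3]{HJKN2020} with $q\gets 2$), not in $L^{\mathfrak p}$, and the resulting bound for $u^Y-u$ is a deterministic pointwise estimate, not an $L^{\mathfrak p}$ one; writing it as $\lVert u^Y(t,x)-u(t,x)\rVert_{\mathfrak p}$ conflates deterministic functions with random fields.
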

\begin{proof}[Proof of \cref{c02}]
Observe that \eqref{c03} prove \eqref{c04} and \eqref{c05}. For the rest of the proof let $\Delta=\{(t,s)\in [0,T]^2\colon t\leq s \}$ and 
$\mathfrak{X}^k=(\mathfrak{X}^{k,t,x}_s)_{t\in [0,T],s\in [t,T],x\in \R^d}\colon \Delta\times \R^d\times \Omega\to \R^d$
satisfy for all $t\in [0,T]$, $s\in [t,T]$, $x\in \R^d$ that $ \mathfrak{X}^{0,t,x}_s=X^{0,t,x}_s$ and 
$ \mathfrak{X}^{1,t,x}_s=Y^{0,t,x}_s$. 
For every $x\in \R^d$ let $\mathfrak{Y}^x=(\mathfrak{Y}^x_t)_{t\in [0,T]}\colon [0,T]\times \Omega\to \R^d$ satisfy for all $t\in [0,T]$ that
$\mathfrak{Y}^x_t=x+\mu(x)t+\sigma(x)W_t$. 
For every $x\in \R^d$, $n \in \N$ let $\tau^x_n\colon \Omega\to [0,T]$ satisfy that 
$\tau^x_n=\inf(\{T\}\cup \{t\in [0,T]\colon [\sup_{s\in[0,t]}\varphi(\mathfrak{Y}^x_s)] + \int_0^t\sum_{i=1}^d\lvert (\varphi' (\mathfrak{Y}^x_s))(\sigma_i(x)) \rvert^2\,ds\geq n \})$.
Next, the triangle inequality, \eqref{c03}, and \eqref{c08} prove for all $x\in \R^d$ that
\begin{align} \begin{split} 
\max \{\lVert\mu (x)\rVert, \lVert\sigma (x)\rVert\}
&\leq \max \{\lVert\mu (x)-\mu(0)\rVert+\lVert\mu(0)\rVert, \lVert\sigma (x)-\sigma(0)\rVert+\lVert\sigma(0)\rVert\}\\
&\leq \max \{c\lVert x\rVert+\lVert\mu(0)\rVert,
c\lVert x\rVert+\lVert\sigma(0)\rVert
\}\leq c(\varphi(x))^{\frac{1}{p}}.\end{split}\label{c10}
\end{align}
This, \eqref{c08}, and the fact that
$\forall\,a,b\in [0,\infty),\lambda\in (0,1)\colon a^\lambda b^{1-\lambda}\leq \lambda a+(1-\lambda)b$ imply  for all $x,y\in\R^d$  that 
\begin{align}\begin{split}
&\left\lvert( \varphi'(y))(\mu(x))\right\rvert+\frac{1}{2}
\left\lvert\sum_{k=1}^{d}
(\varphi''(y))(\sigma_k(x),\sigma_k(x))\right\rvert\\
&\leq
c(\varphi(y))^{1-\frac{1}{p}}
\lVert\mu(x)\rVert+\frac{c}{2} 
(\varphi(y))^{1-\frac{2}{p}}\sum_{k=1}^{d}
\lVert\sigma_k(x)\rVert^2\\&=
c(\varphi(y))^{1-\frac{1}{p}}
\lVert\mu(x)\rVert+\frac{c}{2} 
(\varphi(y))^{1-\frac{2}{p}}\lVert\sigma(x)\rVert^2
\\
&
\leq  c(\varphi(y))^{1-\frac{1}{p}}
c(\varphi(x))^{\frac{1}{p}}+\frac{c}{2}(\varphi(y))^{1-\frac{2}{p}}c^2(\varphi(x))^\frac{2}{p}
\\
&
\leq
c^2\left[\left(1-\frac{1}{p}\right)\varphi(y)+\frac{1}{p}\varphi(x)\right]+ \frac{c^3}{2}\left[\left(1-\frac{2}{p}\right)\varphi(y)+\frac{2}{p}\varphi(x)\right]\\
&\leq \left[c^3\left(1-\frac{1}{p}\right)+\frac{c^3}{2}\left(1-\frac{2}{p}\right)\right]\varphi(y)
+
\left[\frac{c^3}{p}+\frac{2c^3}{2p}
\right]\varphi(x)\\
&
= \left(\frac{3c^3}{2}-\frac{2c^3}{p}\right)\varphi(y)+\frac{2c^3}{p}\varphi(x).
\end{split}\label{r04}\end{align}
Combining this and, e.g., \cite[Lemma~2.2]{CoxHutzenthalerJentzen2014} (applied
for every $t\in[0,T)$,
$s\in[t,T]$,  $x\in\R^d$, $\theta\in\Theta$
with
$T\gets T-t$,
$O\gets\R^d$,
$V\gets( [0,T-t]\times\R^d\ni(s,x)\mapsto\varphi(x)\in[0,\infty))$, $\alpha\gets ([0,T-t]\ni s\mapsto 2c^3 \in [0,\infty) )$, 
$\tau\gets s-t$,
$X\gets (X^{\theta,t,x}_{t+r})_{r\in[0,T-t]}$  in the notation of \cite[Lemma~2.2]{CoxHutzenthalerJentzen2014}) demonstrates  for all
$\theta\in\Theta$, $x\in\R^d$, $t\in[0,T]$, $s\in [t,T]$ 
  that 
\begin{align}\label{r05}
\E \bigl[\varphi(X_{s}^{\theta,t,x})\bigr]\leq e^{2c^3(s-t)}\varphi(x).
\end{align}
It\^o's formula, \eqref{r04}, and the fact that $\varphi\ge 1$ imply  for all $x\in\R^d$, $t\in[0,T]$  that
\begin{equation}
\begin{split}
&\E[\varphi(\mathfrak{Y}^x_{\min\{\tau_n^x,t\}})]\\
&=
\varphi(x)+\E\!\left[\int_0^{\min\{\tau_n^x,t\}}
( \varphi'(\mathfrak{Y}^x_s))(\mu(x))+\frac{1}{2}\sum_{k=1}^{m}
(\varphi''(\mathfrak{Y}^x_s))(\sigma_k(x),\sigma_k(x))\,ds\right]\\
&\le 
\varphi(x)+\E\!\left[\int_0^{\min\{\tau_n^x,t\}}
\left(\frac{3c^3}{2}-\frac{2c^3}{p}\right)\varphi(\mathfrak{Y}_s^x)+\frac{2c^3}{p}\varphi(x)\,ds\right]\\
&\le 
\varphi(x)\left(1+\frac{2c^3t}{p}\right)+\left(\frac{3c^3}{2}-\frac{2c^3}{p}\right)\E\!\left[\int_0^{t}
 \varphi(\mathfrak{Y}_s^x)\1_{[0,\tau_n^x]}(s) \,ds\right]\\
&\le 
\varphi(x)\left(1+\frac{2c^3t}{p}\right)+\left(\frac{3c^3}{2}-\frac{2c^3}{p}\right)\int_0^{t}
\E[\varphi(\mathfrak{Y}_{\min\{\tau_n^x,s\}}^x)]\,ds.
\end{split}
\end{equation}
Gronwall's inequality and
the fact that for all $a\in \R$ it holds that $ 1+ a\leq e^{a}$ therefore assure  for all $x\in\R^d$, $t\in[0,T]$ that 
\begin{align}
\E[\varphi(\mathfrak{Y}^x_{\min\{\tau_n^x,t\}})]
\leq \exp\!\left( \left[ \frac{ 3c^3}{ 2 } - \frac{ 2c^3}{ p } \right] t \right) 
\left[ 1 + \frac{ 2c^3t }{p } \right] \varphi(x)
\leq e^{2c^3t}\varphi(x).
\end{align}
Fatou's lemma hence proves  for all $x\in\R^d$, $t\in[0,T]$  that
\begin{align}
\E\bigl[
\varphi(x+\mu(x)t+\sigma(x)W_{t})\bigr]
=\E[\varphi(\mathfrak{Y}^x_{t})]
\leq e^{2c^3t}\varphi(x).
\end{align}%
The tower property for conditional expectations, 
the fact that for all $t\in[0,T]$, $s\in[t,T]$, $\theta\in\Theta$ it holds that
$W^\theta_{s}-W^\theta_{t}$ and $\F_t$ are independent,
 and the fact that for all $t\in[0,T]$, $s\in[t,T]$, $\theta\in\Theta$, $B\in\mathcal{B}(\R^d)$ it holds that $\P((W^\theta_{s}-W^\theta_{t})\in B)=\P( W^\theta_{s-t}\in B)$ hence prove  for all
$\theta\in\Theta$,
$x\in\R^d$, $t\in[0,T]$, $s\in [t,T]$ 
  that
\begin{align}\begin{split}
\E \bigl[\varphi(Y_{s}^{\theta,t,x})\bigr]
&=\E\biggl[\E \Bigl[\varphi\bigl(Y_{\max\{t,\rdown{s}_{K}\}}^{\theta,t,x}+
\mu(Y_{\max\{t,\rdown{s}_{K}\}}^{\theta,t,x})(s- \max\{t,\rdown{s}_{K}\})\\
&\quad\qquad\qquad+
\sigma(Y_{\max\{t,\rdown{s}_{K}\}}^{\theta,t,x})(W^{ {\theta}}_{s} -W^{ {\theta}}_{ \max\{t,\rdown{s}_{K}\}} )\bigr)\Big|\F_{\rdown{s}_{K}}\Bigr] \biggr]
\\
&
=\E\!\left[\E \Bigl[\varphi \bigl(z+
\mu(z)(s- \max\{t,\rdown{s}_{K}\})+
\sigma(z)(W^{ {\theta}}_{s- \max\{t,\rdown{s}_{K}\}} ) \bigr) \Bigr]\Bigr|_{ z=Y_{t,\max\{t,\rdown{s}_{K}\}}^{\theta,x}}\right]\\
&\leq e^{2c^3(s-\max\{t,\rdown{s}_{K}\})}
\E\!\left[
\varphi\bigl(Y_{\max\{t,\rdown{s}_{K}\}}^{\theta,t,x}\bigr)\right].
\end{split}\end{align}
Induction and \eqref{c09} hence show 
for all 
$\theta\in\Theta$,
$x\in\R^d$, $t\in[0,T]$, $s\in [t,T]$ 
  that 
$
\E \bigl[\varphi(Y_{s}^{\theta,t,x})\bigr]\leq e^{2c^3(s-t)}\varphi(x).
$ 
Jensen's inequality and \eqref{r05} therefore prove  
for all $q\in[0,p]$,
$\theta\in\Theta$,
$x\in\R^d$, $t\in[0,T]$, $s\in [t,T]$ 
that
\begin{align}\label{r06}\begin{split}
&\max\!\big\{\E \bigl[(\varphi(Y_{s}^{\theta,t,x}))^{\frac{q}{p}}\bigr],
\E \bigl[(\varphi(X_{s}^{\theta,t,x}))^{\frac{q}{p}}\bigr]\big\}\\
&
\leq 
\max\!\left\{
\left(
\E \bigl[\varphi(Y_{s}^{\theta,t,x})\bigr]\right)^{\frac{q}{p}},
\left(\E \bigl[\varphi(X_{s}^{\theta,t,x})\bigr]\right)^{\frac{q}{p}}\right\}
\leq e^{2qc^3(s-t)/p}(\varphi(x))^{\frac{q}{p}}.
\end{split}\end{align}
Moreover, observe that the fact that $\mu$ is continuous,
the fact that $\sigma$ is continuous, the fact that for all 
$\theta\in\Theta$, $\omega \in \Omega$ we have that
$[0,T]\ni t\mapsto W^\theta_t(\omega)\in \R^d$ is continuous,
and Fubini's theorem imply  for all
$\theta\in\Theta$ and all
measurable
$\eta\colon  [0,T]\times\R^d\to[0,\infty)$ 
that  
\begin{equation}\label{v09d}
\Delta\times\R^d \ni (t,s,x)\mapsto
\E\bigl[\eta\bigl(s,Y_{s}^{\theta,t,x}\bigr)\bigr]\in[0,\infty]
\end{equation}
is measurable.
Furthermore, note that \eqref{c08}, \eqref{c03}, \eqref{r04}, and, e.g., \cite[Lemma~3.7]{beck2019existence} (applied with $\mathcal{O}\gets\R^d$,  
$V\gets
([0,T]\times\R^d\ni (t,x)\mapsto e^{-2c^3 t/p}\varphi(x) \in(0,\infty))
$
 in the notation of \cite[Lemma~3.7]{beck2019existence})
imply that
$\Delta\times\R^d\times\R^d \ni (t,s,x,y)\mapsto
\bigl(s,X_{s}^{\theta,t,x},X_{s}^{\theta,t,y}\bigr)\in\mathcal{L}^0(\Omega;\R\times\R^d\times\R^d )$ is continuous.
This and the dominated convergence theorem prove  for all $\theta\in\Theta$ and all bounded and
continuous $\eta\colon  [0,T]\times\R^d\times\R^d\to[0,\infty)$  that
$ 
\Delta\times\R^d\times\R^d \ni (t,s,x,y)\mapsto
\E\bigl[\eta\bigl(s,X_{s}^{\theta,t,x},X_{s}^{\theta,t,y}\bigr)\bigr]\in[0,\infty]$
is continuous. Hence, we obtain  
for all $\theta\in\Theta$ and all bounded and
continuous $\eta\colon  [0,T]\times\R^d\times\R^d\to[0,\infty)$ that
$ 
\Delta\times\R^d\times\R^d \ni (t,s,x,y)\mapsto
\E\bigl[\eta\bigl(s,X_{s}^{\theta,t,x},X_{s}^{\theta,t,y}\bigr)\bigr]\in[0,\infty]$
is measurable.
This implies for all $\theta\in\Theta$
and all
measurable
$\eta\colon  [0,T]\times\R^d\times\R^d\to[0,\infty)$ 
that  
\begin{align}\label{v09e}
\Delta\times\R^d\times\R^d \ni (t,s,x,y)\mapsto
\E\bigl[\eta\bigl(s,X_{s}^{\theta,t,x},X_{s}^{\theta,t,y}\bigr)\bigr]\in[0,\infty]
\end{align}
is measurable.
Combining
 \eqref{v09d}, \eqref{r06},
\eqref{v04b}, \eqref{v07}, and
\cite[Proposition~2.2]{HJKN2020} (applied
for every 
$ k\in \{0,1\}$
with $L\gets c$,
$\mathcal{O}\gets \R^d$, 
$ (X_{t,s}^x)_{(t,s,x)\in \Delta \times \R^d}\gets  (\mathfrak{X}^{k,t,x}_{s})_{(t,s,x)\in\Delta \times \R^d}$, 
$
V\gets ([0,T]\times\R^d\ni(s,x)\mapsto e^{2c^3  \beta(T-s)/p}(\varphi (x))^{\beta /p}\in(0,\infty))
$
 in the notation of \cite[Proposition~2.2]{HJKN2020}) hence
 establishes that 
\begin{enumerate}[a)]
\item there exist unique
measurable
$u_k \colon [0,T]\times\R^d\to\R$, $k\in \{0,1\}$,
which satisfy for all $k\in \{0,1\}$, $t\in[0,T]$, $x\in\R^d$ 
that
$\sup_{s\in[0,T]}\sup_{x\in\R^d} \bigl[\lvert u_k(s,x)\rvert{(\varphi(x))^{-\beta /p}}\bigr]+\E\bigl[\bigl\lvert g\bigl(\mathfrak{X}^{k,t,x}_{T}\bigr)\bigr\rvert+\int_{t}^{T}\bigl\lvert f\bigl(s,\mathfrak{X}_{s}^{k,t,x},u_k\bigl(s,\mathfrak{X}_{s}^{k,t,x}\bigr)\bigr)\bigr\rvert\,ds \bigr]<\infty$
and 
\begin{align}\begin{split}
&u_k(t,x)=\E\!\left[g\bigl(\mathfrak{X}^{k,t,x}_{T}\bigr)+\int_{t}^{T}f\bigl(s,\mathfrak{X}_{s}^{k,t,x},u_k(s,\mathfrak{X}_{s}^{k,t,x})\bigr)\,ds \right]
\end{split}\label{v15}\end{align}
and
\item 
we have for all $k\in \{0,1\}$ that 
\begin{align}\label{v14}\begin{split}
\sup_{t\in[0,T]}\sup_{x\in\R^d} \left[\frac{|u_k(t,x)|}{e^{2c^3 \beta  (T-t)/p}(\varphi(x))^{\beta /p}}\right]\leq 
\sup_{t\in[0,T]}\sup_{x\in\R^d}\left[\left[\frac{|g(x)|}{(\varphi(x))^{\beta/p}}
+\frac{|Tf(t,x,0)|}{(\varphi(x))^{\beta /p}}\right]e^{cT}\right]
\leq
2be^{cT}.
\end{split}\end{align}
\end{enumerate}
This proves \eqref{k21}. 
Moreover, note that \cite[Lemma~3.2]{HJKN2020} establishes \eqref{c06}.
Next observe that \eqref{c10} and \eqref{r06} demonstrate  for all 
$\theta\in\Theta$, $t\in[0,T]$, $r\in[t,T]$, $x\in\R^d$ that
\begin{align}\label{r12}\begin{split}
&\max\!\left\{\E\!\left[\bigl\lVert\mu(Y_{\max\{t,\rdown{r}_K\}}^{\theta,t,x})\bigr\rVert^2\right],
\E\!\left[\lVert\sigma(Y_{\max\{t, \rdown{r}_K\}}^{\theta,t,x})\rVert\right]\right\}\\&\leq c^2
\E\!\left[\bigl(\varphi\bigl(Y_{\max\{t, \rdown{r}_K\}}^{\theta,t,x}\bigr)\bigr)^{\frac{2}{p}}\right]\leq c^2 e^{4c^3(r-t)/p}(\varphi(x))^{\frac{2}{p}}.
\end{split}\end{align}
Furthermore, note  that \eqref{c09} demonstrates 
for all $t\in[0,T]$, 
$r\in[t,T]$,
 $x\in\R^d$, $\theta\in\Theta$ that
$ \sigma (\{Y_{\max\{t, \rdown{r}_K\}}^{\theta,t,x}\})\subseteq {\F_r}$.
Combining this and \eqref{r12} with the fact that for all $t\in[0,T]$,  $x\in\R^d$ we have that 
$\E\!\left[\lVert\sigma(x)W_t\rVert  ^{2} \right]
=
\lVert\sigma(x)\rVert^2{t}
$ shows for all
$\theta\in\Theta$,
 $t\in[0,T]$, $r\in[t,T]$, $x\in\R^d$ that
\begin{align}\begin{split}
&\E\!\left[\bigl\lVert\sigma(Y_{\max\{t, \rdown{r}_K\}}^{\theta,t,x})(W^\theta_{r} -W^\theta_{\max\{t, \rdown{r}_K\}} )\bigr\rVert^2\right]
\\
&
=
\E\!\left[\E\!\left[\bigl\|\sigma(y)(W^\theta_{r} -W^\theta_{\max\{t, \rdown{r}_K\}} )\bigr\|^2\right]\Bigr|_{y= Y_{t,\max\{t, \rdown{r}_K\}}^{\theta,x}}\right]
\\
&
=
\E\!\left[ 
\lVert \sigma(Y_{t,\max\{t, \rdown{r}_K\}}^{\theta,x})\rVert
^2(r- \max\{t, \rdown{r}_K\} )\right]
\\
&
\leq 
\E\!\left[ 
\lVert \sigma(Y_{t,\max\{t, \rdown{r}_K\}}^{\theta,x})\rVert
^2\frac{T}{K}\right]
\leq  c^2 e^{4c^3(r-t)/p}(\varphi(x))^{\frac{2}{p}}\frac{T}{K} .
\end{split}\end{align}
This, \eqref{c09}, the triangle inequality, and \eqref{r12}
 imply  for all
$\theta\in\Theta$, $t\in[0,T]$, $r\in[t,T]$, $x\in\R^d$ that 
\begin{align}\begin{split}
&\left(\E\!\left[ \bigl\lVert Y_{\max\{t, \rdown{r}_K\}}^{\theta,t,x}-
Y_{r}^{\theta,t,x}\bigr\rVert^2 \right]\right)^{\frac{1}{2}}\\
&\leq 
\left(\E\!\left[\bigl\lVert\mu(Y_{\max\{t, \rdown{r}_K\}}^{\theta,t,x})\bigr\rVert^2\right]\right)^{\frac{1}{2}}
(r-\max\{t, \rdown{r}_K\})\\&\quad
+\left(\E\!\left[\bigl\lVert\sigma(Y_{t,\max\{t, \rdown{r}_K\}}^{\theta,x})(W^\theta_{r} -W^\theta_{\max\{t, \rdown{r}_K\}} )\bigr\rVert^2\right]\right)^{\frac{1}{2}}\\
&\leq ce^{2c^3 (r-t)/p}(\varphi(x))^{\frac{1}{p}}
\left(\frac{T}{K}\right)^{\frac{1}{2}}
\lvert r-t\rvert^{\frac{1}{2}}
+
 ce^{2c^3 (r-t)/p}(\varphi(x))^{\frac{1}{p}}\left(\frac{T}{K}\right)^{\frac{1}{2}}
\\
&=
 c\bigl[\lvert r-t\rvert^{\frac{1}{2}}+1\bigr]
e^{2c^3 (r-t)/p}(\varphi(x))^{\frac{1}{p}}
\left(\frac{T}{K}\right)^{\frac{1}{2}}
.\label{r35}
\end{split}\end{align}
Next, note that \eqref{c03} and the fact that $c\geq 1$ assure  for all $ {z},y\in\R^d$ with $ {z}\neq y$ that
\begin{align}\label{r34}
\frac{\langle  {z}-y,\mu( {z})-\mu(y)\rangle+\frac{1}{2}\lVert\sigma( {z})-\sigma(y)\rVert^2}{\| {z}-y\|^2}+\frac{(\frac{2}{2}-1)\lVert(\sigma ( {z})-\sigma(y))^{\mathsf{T}}( {z}-y)\rVert^2}{\lVert {z}-y\rVert^4}\leq   2c^2.
\end{align}
This, \cite[Theorem 1.2]{HJ20} (applied
for every $\theta\in\Theta$,
$t\in[0,T)$, $s\in(t,T]$,
$x\in\R^d$ 
with
$
H\gets \R^d$, 
$U\gets \R^m$, 
$D\gets\R^d$,
$T\gets (s-t)$, $({\F}_r)_{r\in [0,T]}\gets (\F_{r+t})_{r\in[0,s-t]}$,
$
(W_r)_{r\in [0,T]}\gets 
(W_{t+r}^{\theta}-W_t^{\theta})_{r\in [0,s-t]}
$,
$(X_r)_{r\in[0,T]}\gets (X^{\theta,t,x}_{t+r})_{r\in [0,s-t]}$,
$(Y_r)_{r\in[0,T]}\gets (Y^{\theta,t,x}_{t+r})_{r\in [0,s-t]}$,
$(a_r)_{r\in[0,T]}\gets (\mu(Y^{\theta,x}_{t,\max\{t,\rdown{t+r}_K\}}))_{r\in [0,s-t]}$,
$(b_r)_{r\in[0,T]}\gets (\sigma(Y^{\theta,t,x}_{\max\{t,\rdown{t+r}_K\}}))_{r\in [0,s-t]}$, $\epsilon\gets 1$,
$p\gets 2$, $\tau \gets (\Omega\ni \omega\mapsto s-t\in[0,s-t])$,
$\alpha\gets 1$, $\beta \gets 1$, $r\gets 2$, 
$q\gets \infty$
in the notation of \cite[Theorem 1.2]{HJ20}), \eqref{c03}, \eqref{r35}, the fact that for all $t\in[0,\infty)$ we have that $\sqrt{t}(\sqrt{t}+1)\leq e^t$, the fact that $1\leq c$, and the fact that $p\geq 2$ imply  for all
$\theta\in\Theta$, $t\in[0,T]$, $s\in[t,T]$, $x\in\R^d$ that
\begin{equation}\begin{split}
&\left(\E\!
\left[\bigl\lVert X_{s}^{\theta,t,x}-Y_{s}^{\theta,t,x}\bigr\rVert^2\right]\right)^{\frac{1}{2}}\\
&
\leq 
\sup_{\substack{ {z},y\in\R^d,\\ {z}\neq y}}
\exp\! \left(\int_{t}^{s}\left[\frac{\langle  {z}-y,\mu( {z})-\mu(y)\rangle+
\frac{(2-1)(1+1)}{2}
\lVert\sigma( {z})-\sigma(y)\rVert^2}{\| {z}-y\|^2}+\frac{1-\frac{1}{2}}{1}+\frac{\frac{1}{2}-\frac{1}{2}}{1}\right]^+
dr
\right)\\
&\quad\cdot
\Biggl[\left(\int_{t}^{s}\E\!\left[ \bigl\lVert\mu \bigl(Y_{\max\{t,\rdown{r}_K\}}^{\theta,t,x}\bigr)-
\mu\bigl (Y_{r}^{\theta,t,x}\bigr)\bigr\rVert^2 \right]dr\right)^{\frac{1}{2}}\\
&\quad
+\sqrt{\frac{(2-1)(1+1)}{1}}\left(\int_{t}^{s}\E\!\left[ \lVert\sigma \bigl(Y_{\max\{\rdown{r}_K\}}^{\theta,t,x}\bigr)-
\sigma \bigl(Y_{r}^{\theta,t,x}\bigr)\rVert^2 \right]dr\right)^{\frac{1}{2}}
\Biggr]
\\
&\leq e^{3c^2(s-t)}3c\left(|s-t|\sup_{r\in[t,s]}\E\!\left[ \bigl\lVert Y_{\max\{t,\rdown{r}_K\}}^{\theta,t,x}-
Y_{r}^{\theta,t,x}\bigr\rVert^2 \right]\right)^{\frac{1}{2}}\\
&\leq e^{3c^2(s-t)}3c 
\lvert s-t\rvert^{\frac{1}{2}}
 c\bigl[\lvert s-t\rvert^{\frac{1}{2}}+1\bigr]
e^{2c^3 (s-t)/p}(\varphi(x))^{\frac{1}{p}}
\left(\frac{T}{K}\right)^{\frac{1}{2}}
\\&\leq 3c^2 e^{4c^2T} 
e^{2c^3 ( {s}-t)/p}(\varphi(x))^{\frac{1}{p}}
\left(\frac{T}{K}\right)^{\frac{1}{2}}.\label{r16}
\end{split}
\end{equation}
 Next, observe that
 \eqref{c04}, \eqref{r34}, and \cite[Corollary~2.26]{CoxHutzenthalerJentzen2014} (applied
for every $t\in[0,T)$, $s\in(t,T]$
with
 $T\gets s-t$,
$O\gets \R^d$,
 $(\mathcal{F}_{r})_{r\in[0,T]} \gets (\F_{t,t+r})_{r\in[0,s-t]} $,
 $(W_{r})_{r\in[0,T]} \gets (W^0_{t+r}-W^0_t)_{r\in[0,s-t]} $,
$\alpha_0\gets 0$, 
$\alpha_1\gets 0$,
$\beta_0\gets 0$,
$\beta_1\gets 0$,
$c\gets 2c^2$,
$r\gets 2$, $p\gets 2$, $q_0\gets \infty$, $q_1\gets \infty$,
$U_0\gets (\R^d\ni x\mapsto 0\in \R)$,
$U_1\gets (\R^d\ni x\mapsto 0\in [0,\infty))$,
$\overline{U}\gets (\R^d\ni x\mapsto 0\in \R)$,
$(X^x_{r})_{r\in[0,T],x\in\R^d}\gets (X_{t+r}^{0,t,x})_{r\in[0,s-t],x\in\R^d} $
in the notation of \cite[Corollary~2.26]{CoxHutzenthalerJentzen2014}) demonstrate   
for all  $t\in[0,T)$, $s\in(t,T]$, $x,y\in\R^d$  that
$
\left(\E\!\left[
\lVert X_{s}^{0,t,x}-X_{s}^{0,t,y}\rVert^2\right]\right)^{\frac{1}{2}}\leq e^{2c^2(s-t)}\lVert x-y\rVert.
$
This
and \eqref{r16}
imply 
for all $t\in[0,T]$, $s\in[t,T]$,
$r\in[s,T]$, $x,y\in\R^d$  that
\begin{align}\begin{split}
&
\left(\E\!\left[
\E\!\left[\bigl\lVert X_{r}^{0,s,\mathfrak{x}}-X_{r}^{0,s,\mathfrak{y}}\bigr\rVert^{2}\right]\Bigr|_{\substack{(\mathfrak{x},\mathfrak{y})=(X^{0,t,x}_{s},Y_{s}^{0,t,x})}}\right]\right)^{\frac{1}{2}}
\leq \left(\E\!\left[
\left[e^{2c^2(r-s)}\left\lVert X^{0,t,x}_{s}-Y_{s}^{0,t,x}\right\rVert \right]^2\right]\right)^{\frac{1}{2}}\\&\leq 
e^{2c^2(r-s)}
3c^2 e^{4c^2T} 
e^{2c^3 (s-t)/p}(\varphi(x))^{\frac{1}{p}}
\left(\frac{T}{K}\right)^{\frac{1}{2}}
\leq 
3c^2 e^{4c^2T} 
\left(\frac{T}{K}\right)^{\frac{1}{2}}\bigl[ e^{4c^3 (T-t)/p}(\varphi(x))^{\frac{2}{p}}\bigr]^{\frac{1}{2}}
.
\end{split}\label{v09c}\end{align}
Furthermore, note that \eqref{c04} and Tonelli's theorem
 ensure  for all 
$t\in[0,T]$, $s\in[t,T]$, $r\in[s,T]$, $x,y\in\R^d$ and all measurable $h\colon \R^{d}\times\R^d\to [0,\infty)$
 that
$
\R^d \times \R^d \ni (y_1,y_2) \mapsto \E\bigl[h\bigl(X^{0,s,y_1}_{r},X^{0,s,y_2}_{r}\bigr)\bigr] \in [0,\infty]
$
is measurable.
Moreover, observe that \eqref{c04} assures  for all 
$t\in[0,T]$, $s\in[t,T]$, $r\in[s,T]$, $x,y\in\R^d$  that
$X^{0,t,x}_{s} $ and 
$X^{0,s,y}_{r} $ are independent.
This and  the disintegration theorem
show that for all
$t\in[0,T]$, $s\in[t,T]$, $r\in[s,T]$, $x,y\in\R^d$ and all measurable $h\colon \R^{d}\times\R^d\to [0,\infty)$ it holds
 that
$
\E\bigl[ \E\bigr[ h\bigl(X^{0,s,\tilde{x}}_{r},X^{0,s,\tilde{y}}_{r}\bigr) \bigr] |_{\tilde{x}=X_{s}^{0,t,x},\tilde{y}=X_{s}^{0,t,y}} \bigr]
   =\E\!\left[h\bigl(X^{0,t,x}_{r},X^{0,t,y}_{r}\bigr)\right] .
$
Combining 
\eqref{c04}, 
\eqref{c09},  
\eqref{r06}, 
\eqref{v09e}, 
\eqref{v07}, 
\eqref{v09c}, 
\eqref{v15}, 
\eqref{v14}, 
\cite[Lemma~2.3]{HJKN2020} (applied with
$L\gets c$, $\rho\gets 2c^3$, 
$\eta\gets 1$,
$\delta\gets  3c^2 e^{4c^2T} 
\left(\frac{T}{K}\right)^{\frac{1}{2}}$,
$p\gets p/\beta$,
$ q\gets 2$,
$(X_{t,s}^{x,1})_{t\in[0,T],s\in[t,T],x\in\R^d}\gets (X_{s}^{0,t,x})_{t\in[0,T],s\in[t,T],x\in\R^d}$,
$(X_{t,s}^{x,2})_{t\in[0,T],s\in [t,T],x\in\R^d}\gets (Y_{s}^{0,t,x})_{t\in[0,T],s\in [t,T],x\in\R^d}$,
$V\gets b^{p/\beta}\varphi$,
$\psi \gets \bigl([0,T]\times\R^d\ni(t,x)\mapsto 
e^{
4c^3 (T-t)/p}(\varphi(x))^{\frac{2}{p}}\in(0,\infty)\bigr)$, 
$u_1\gets u_0$, $u_2\gets u_1$  in the notation of \cite[Lemma~2.3]{HJKN2020}),
the fact that
$1+cT\leq e^{cT}$, 
the fact that $c\geq1$,
the fact that $\varphi\geq1$,
the fact that $p\geq 2$,
and
the fact that $p\geq 2\beta$
hence implies  
for all $t\in[0,T]$, $x\in\R^d$ that
\begin{align}\begin{split}
&\lvert u_0(t,x)-  u_1(t,x)\rvert\\&\leq
4(1+cT)T^{-\frac{1}{2}}
e^{cT+(2c^3\beta/p+c) T}
(b^{p/\beta}\varphi(x))^{\frac{\beta }{p}}
\bigl[ e^{
4c^3 (T-t)/p}(\varphi(x))^{\frac{2}{p}}\bigr]^{\frac{1}{2}}
3c^2 e^{4c^2T} 
\left(\frac{T}{K}\right)^{\frac{1}{2}}\\
&\leq 4 e^{cT} T^{-\frac{1}{2}}e^{cT+c^3T+cT}b (\varphi(x))^{\frac{\beta}{p}}e^{c^3T}(\varphi(x))^{\frac{1}{p}}3c^2 e^{4c^2T}{\left(\frac{T}{K}\right)}^{\frac{1}{2}}
\\
&\leq 12bc^2 T^{-\frac{1}{2}}e^{{9}c^3T}
(\varphi(x))^{\frac{\beta+1}{p}}\left(\frac{T}{K}\right)^{\frac{1}{2}}.
\end{split}\label{v16}\end{align}
For the rest of this proof let $V \in C([0,T]\times\R^d,[1,\infty) )$
satisfy for all $t\in [0,T]$, $x\in \R^d$ that
\begin{align}
V(t,x)=be^\frac{2c^3\beta(T-t)}{p}(\varphi(x))^{\frac{\beta}{p}}.
\end{align}
Then \eqref{r06} and the fact that $p\geq \mathfrak{p}\beta$ show for all $t\in [0,T]$, $s\in [t,T]$, $x\in \R^d$ that
\begin{align}
\left\lVert
V(s,Y^{0,t,x}_s)\right\rVert_{\mathfrak{p}}
=b e^\frac{2c^3\beta(T-s)}{p}
\left\lVert\varphi
(Y^{0,t,x}_s)^{\frac{\beta}{p}}\right\rVert_{\mathfrak{p}}
\leq b e^\frac{2c^3\beta(T-s)}{p}
e^\frac{2c^3\beta(s-t)}{p}(\varphi (x))^\frac{\beta}{p}=V(t,x).
\end{align}
Then \cref{k09} (applied with
$d\gets d$, $p_v\gets \mathfrak{p}$, $c\gets c$, $T\gets T$,
$f\gets f$, $g\gets g$, $V\gets V$, $\Theta\gets\Theta$, $(\mathfrak{t}^\theta)_{\theta\in \Theta}\gets
(\mathfrak{t}^\theta)_{\theta\in \Theta} $,
$X\gets Y$, $(U^\theta_{n,m})_{\theta\in \Theta,n,m\in \Z}\gets (U^\theta_{n,m})_{\theta\in \Theta,n,m\in \Z}$, $p\gets \mathfrak{p}$, $q_1\gets 1$ in the notation of \cref{k09}), \eqref{t27c}, and the independence and distributional assumptions show 
for all 
$m,n\in \N$,
$t\in [0,T]$, $x\in \R^d$ that
\begin{align} \begin{split} 
\left\lVert
{U}_{n,m}^{0}(t,x)-u_1(t,x)\right\rVert_\mathfrak{p}
&\leq 2(\mathfrak{p}-1)^{\frac{n}{2}}e^{5cTn}e^{m^{\mathfrak{p}/2}/\mathfrak{p}}m^{-n/2}V(t,x)\\
&\leq 
2\mathfrak{p}^{\frac{n}{2}}e^{5cTn}e^{m^{\mathfrak{p}/2}/\mathfrak{p}}m^{-n/2}be^\frac{2c^2\beta T}{p}(\varphi(x))^{\frac{\beta}{p}}
.
\end{split}\end{align}
This, the triangle inequality, the fact that $p\geq 2\beta$, and the fact that $\varphi \geq 1$ show for all $t\in [0,T]$, $x\in \R^d$ that
\begin{align} \begin{split} 
&\left\lVert
{U}_{n,m}^{0}(t,x)-u_0(t,x)\right\rVert_\mathfrak{p}\\
&\leq 
\left\lVert
{U}_{n,m}^{0}(t,x)-u_1(t,x)\right\rVert_\mathfrak{p}
+\lvert u_0(t,x)-  u_1(t,x)\rvert\\
&\leq 2\mathfrak{p}^{\frac{n}{2}}e^{5cTn}e^{m^{\mathfrak{p}/2}/\mathfrak{p}}m^{-n/2}be^\frac{2c^2\beta T}{p}(\varphi(x))^{\frac{\beta}{p}}+
12bc^2 T^{-\frac{1}{2}}e^{{9}c^3T}
(\varphi(x))^{\frac{\beta+1}{p}}\left(\frac{T}{K}\right)^{\frac{1}{2}}\\
&\leq 12bc^2e^{9c^3T}
(\varphi(x))^{\frac{\beta+1}{p}}\left[
2\mathfrak{p}^{\frac{n}{2}}e^{5cTn}e^{m^{\mathfrak{p}/2}/\mathfrak{p}}m^{-n/2}+\frac{1}{\sqrt{K}}
\right]
\end{split}
\end{align}
This completes the proof of \cref{c02}.
\end{proof}
\subsection{Complexity analysis for MLP approximations involving Euler-Maruyama approximations}
We are now in a position to prove \cref{c40}.
\begin{proof}
[Proof of \cref{c40}]For every $d\in \N$ let $\varphi_d\in C(\R^d,\R)$ satisfy for all $x\in \R^d$ that
\begin{align}\label{t20}
\varphi_d(x)=2^\mathfrak{p}c^\mathfrak{p} d^{c\mathfrak{p}}(d^{2c}+\lVert x\rVert^2)^{\frac{\mathfrak{p}}{2}}.
\end{align}
Then \eqref{d05} shows for all $d\in \N$,  $x\in \R^d$ that
\begin{align}\label{t21} \begin{split} 
&\max
\{
\lVert \mu^d(0)\rVert+c\lVert x\rVert,
\lVert \sigma^d(0)\rVert+c\lVert x\rVert
\}\\
&\leq cd^c+ c\lVert x\rVert
=c(d^c+\lVert x\rVert)\leq 
2c(d^{2c}+\lVert x\rVert^2)^\frac{1}{2}\leq c (\varphi_d(x))^\frac{1}{\mathfrak{p}}.\end{split}
\end{align}
Next,
\cite[Lemma~2.6]{HN2022a} (applied for every $d\in \N$ with 
$d\gets d$, $m\gets d$, $a\gets d^{2c}$, $c\gets 0$,
$p\gets \mathfrak{p}/2$, 
$\mu\gets 0$, $\sigma\gets 0$, $\varphi\gets \varphi_d/(2^\mathfrak{p}c^\mathfrak{p}d^{\mathfrak{p}c})$ in the notation of \cite[Lemma~2.6]{HN2022a}) and \eqref{t20} show
for all $x,z\in \R^d$ that
\begin{align}
\lVert (\varphi_d'(x))(z) \rVert\leq \mathfrak{p} (\varphi_d(x))^{1-\frac{1}{\mathfrak{p}}} \lVert z\rVert, \quad
\lVert (\varphi_d''(x))(z,z) \rVert\leq \mathfrak{p}^2 (\varphi_d(x))^{1-\frac{2}{\mathfrak{p}}} \lVert z\rVert^2.
\end{align}
This, \eqref{t21}, and the fact that $\mathfrak{p}^2\leq c$ show for all $d\in \N$,
$\varepsilon\in (0,1)$, $x,z\in \R^d$ that
\begin{align}
\max\!\left\{
\frac{\lvert(\varphi_d'(x))(z)\rvert}{(\varphi_d(x))^\frac{\mathfrak{p}-1}{\mathfrak{p}}\lVert z\rVert},
\frac{(\varphi_d''(x))(z,z)}{(\varphi_d(x))^\frac{\mathfrak{p}-2}{\mathfrak{p}}\lVert z\rVert^2},
\frac{c\lVert x\rVert+\lVert\mu^d(0)\rVert}{(\varphi_d(x))^\frac{1}{\mathfrak{p}}},
\frac{c\lVert x\rVert+\lVert\sigma^d(0)\rVert}{(\varphi_d(x))^\frac{1}{\mathfrak{p}}}
\right\}\leq c.\label{c09b}
\end{align}
Next, \eqref{t23} and \eqref{t20} show for all $d\in \N$, $x\in \R^d$ that
\begin{align}
\max \{
\lvert Tf(0)\rvert,
\lvert g^d(x)\rvert\}
\leq  (\varphi_d(x))^\frac{1}{\mathfrak{p}}.\label{c10c}
\end{align}
Furthermore, \eqref{t24} show for all $d\in \N$, $x,y\in \R^d$ that
\begin{align}
\lvert g^d(x)-g^d(y)\rvert\leq \frac{c}{\sqrt{T}}\lVert x-y\rVert
\leq \frac{(\varphi_d(x))^\frac{1}{\mathfrak{p}}+(\varphi_d(y))^\frac{1}{\mathfrak{p}}}{2\sqrt{T}}\lVert x-y\rVert
\leq \frac{(\varphi_d(x)+\varphi_d(y))^\frac{1}{\mathfrak{p}}}{\sqrt{T}}\lVert x-y\rVert.
\end{align}
This,
\cref{c02}
(applied for every $d,K\in \N$
with 
$T\gets T$,
$\mathfrak{p}\gets \mathfrak{p}$,
$\beta\gets 1$, $b\gets 1$, $c\gets c$, 
$p\gets \beta$, $\varphi\gets \varphi_d$,
$g\gets g^d$, $f\gets f$, $\mu\gets \mu^d$, $\sigma\gets \sigma^d$,
$(\mathfrak{t}^\theta)_{\theta\in \Theta}\gets (\mathfrak{t}^\theta)_{\theta\in \Theta}$,
$(W^\theta)_{\theta\in \Theta}\gets (W^{d,\theta})_{\theta\in \Theta}$,
$(Y^{\theta,t,x})_{\theta\in \Theta,t\in [0,T], x\in \R^d}\gets
(Y^{d,\theta,K,t,x})_{\theta\in \Theta,t\in [0,T], x\in \R^d}
$, 
$(U^\theta_{n,m})_{\theta\in \Theta,n,m\in Z}\gets
(U^{d,\theta,K}_{n,m})_{\theta\in \Theta,n,m\in Z}
$ in the notation of \cref{c02})
, \eqref{c09b},   \eqref{c10c}, \eqref{t25}, and \eqref{t24}
show that the following items are true.

\begin{enumerate}[(A)]
\item For every $t\in [0,T]$, $\theta\in \Theta$, $d\in \N$ there exists an up to indistinguishability unique continuous
random field
$X^{d,\theta,t,\cdot }=(X^{d,\theta,t,x}_s)_{s\in [t,T],x\in \R^d}
\colon [t,T]\times \R^d\times \Omega\to \R^d
$ which satisfies that for all $x\in \R^d$
it holds that 
$
(X^{d,\theta,t,x}_s)_{s\in [t,T]}
$
is $(\F_s)_{s\in [t,T]}$-adapted and which satisfies that for all $s\in [t,T]$, $x\in \R^d$ it holds $\P$-a.s.\ that
\begin{align}
X^{d,\theta,t,x}_s=x+\int_{t}^{s}\mu^d (X^{d,\theta,t,x}_r)\,dr
+\int_t^s\sigma^d(X^{d,\theta,t,x}_r)\,dW^{d,\theta}_r.
\end{align}
\item For every $d\in \N$ there exists a unique
measurable
$u^d\colon [0,T]\times\R^d\to\R$ which satisfies
for all $t\in[0,T]$, $x\in \R^d$ that
\begin{align}\label{t29}
\left(\sup_{s\in[0,T],y\in\R^d} [{\lvert u^d(s,y)\rvert}{(\varphi_d(y))^{-1 /\mathfrak{p}}}]\right)+\int_{t}^{T}\E\bigl[\lvert f(u^d(s,X_{s}^{d,0,t,x}))\rvert \bigr]\,ds
+
\E\bigl[\lvert g^d(X^{d,0,t,x}_{T})\rvert\bigr]  < \infty
\end{align}
and\begin{align}
u^d(t,x)=\E\!\left[g^d(X^{d,0,t,x}_{T})\right]+\int_{t}^{T}\E\!\left[f(u^d(s,X_{s}^{0,t,x}))\right]ds .\label{t28}
\end{align}
\item For all $d,K,m\in \N$, $t\in [0,T]$, $x\in \R^d$, $n\in \N_0$ we have that $U^{d,\theta,K}_{n,m}(t,x)$ is measurable.
\item For all
$t\in [0,T]$, $x\in \R^d$, $m,n\in \N$ we have that
\begin{align} \begin{split} 
&\left\lVert
{U}_{n,m}^{d,0,K}(t,x)-u^d(t,x)\right\rVert_\mathfrak{p}\leq  12c^2e^{9c^3T}
(\varphi_d(x))^{\frac{2}{\mathfrak{p}}}\left[
2\mathfrak{p}^{\frac{n}{2}}e^{5cTn}e^{m^{\mathfrak{p}/2}/\mathfrak{p}}m^{-n/2}+\frac{1}{\sqrt{K}}
\right].
\end{split}\label{t43}
\end{align}
\end{enumerate}
Next, the triangle inequality, \eqref{t23}, \eqref{t24},  and the fact that $\forall\,x\in \R^d\colon (1+x)^2\leq 1(1+x^2)$ show 
for all $d\in \N$, $x\in \R^d$ that
\begin{align} \begin{split} 
\langle
x,\mu^d(x)
\rangle
&\leq \lVert x\rVert \left(\lVert \mu^d(x)-\mu^d(0)  \rVert
+
\lVert
\mu^d(0)  \rVert\right)\\
&\leq 
\lVert x\rVert(c\lVert x\rVert+cd^c)\\
&
\leq (1+\lVert x\rVert)^2cd^c\\
&\leq 2cd^c(1+\lVert x\rVert^2).
\end{split}\label{c19b}
\end{align}
Furthermore, the Cauchy--Schwarz inequality implies for all
$d\in \N$, $x,y\in \R^d$ that
\begin{align}
\lVert\sigma^d(x)y\rVert^2= \sum_{i=1}^d \left\lvert\sum_{j=1}^d(\sigma^d)_{ij}(x)y_j\right\rvert^2
\leq \sum_{i=1}^d\left(\sum_{j=1}^{d}\lvert(\sigma^d)_{ij}(x)\rvert^2\right)\left(\sum_{j=1}^{d}\lvert y_j\rvert^2\right)
\leq \lVert\sigma(x)\rVert^2\lVert y\rVert^2.
\end{align}
This and \eqref{t24} show for all
$d\in \N$, $x,y\in \R^d$ that
\begin{align}
\lVert\sigma^d(x)y\rVert\leq 
\lVert\sigma^d(x)\rVert
\lVert y\rVert
\leq (\lVert\sigma^d(x)-\sigma^d(0)\rVert+\lVert\sigma^d(0)\rVert)
\lVert y\rVert
\leq (c\lVert x\rVert+ cd^c)\lVert y\rVert
\leq cd^c(1+\lVert x\rVert)\lVert y\rVert.
\end{align}
This, \eqref{c19b}, 
\cite[Theorem~1.1]{beck2021nonlinear}
(applied for every $d\in \N$ with
$d\gets d$, $L\gets 2cd^c$, $T\gets T$, $\mu\gets \mu^d$, $\sigma\gets \sigma^d$,
$f\gets (\R^d\times \R \ni (x,w)\mapsto f^d(w)\in \R )$,
$g\gets g^d$, $W\gets W^{d,\theta}$ in the notation of 
\cite[Theorem~1.1]{beck2021nonlinear}),
\eqref{t24}, the fact that for every $d\in \N$, $g^d$ is polynomially growing (cf. \eqref{t23}), and the fact that for every $d\in \N$,
$u^d$ is polynomially growing (cf. \eqref{t20} and \eqref{t29})
 show for every $d\in \N$ that $u^d$ is the unique at most polynomially growing viscosity solution of 
\begin{align}
\frac{\partial u^d}{\partial t}(t,x)
+\frac{1}{2}\mathrm{trace}(\sigma^d(\sigma^d(x))^\top
(\mathrm{Hess}_x u^d(t,x) ))
+\langle \mu^d(x),(\nabla_xu^d) (t,x)\rangle
+f(u^d(t,x))=0
\end{align}
with $u^d(T,x)=g^d(x)$ for $t\in (0,T)\times \R^d$. This establishes \eqref{t30}.

Next, 
\eqref{t43} shows that there exists $\kappa\in (0,\infty)$ such that for all $d,m,n\in \N$ we have that
\begin{align} \begin{split} 
\sup_{t\in [0,T],x\in [0,\mathbf{k}]^d}\left\lVert
{U}_{n,m}^{d,0,m^n}(t,x)-u^d(t,x)\right\rVert_\mathfrak{p}
&\leq 
\sup_{x\in [0,\mathbf{k}]^d}\left(
 12c^2e^{9c^3T}
(\varphi_d(x))^{\frac{2}{\mathfrak{p}}}
3\mathfrak{p}^{\frac{n}{2}}e^{5cTn}e^{m^{\mathfrak{p}/2}/\mathfrak{p}}m^{-n/2}\right)\\
&\leq \kappa d^\kappa
\mathfrak{p}^{\frac{n}{2}}e^{5cTn}e^{m^{\mathfrak{p}/2}/\mathfrak{p}}m^{-n/2}.
\end{split}\label{t43a}
\end{align}
For every $\varepsilon\in (0,1)$ let
\begin{align}
N_\varepsilon=\inf\!\left\{
n\in \N\colon \left(
\frac{\mathfrak{p}^{\frac{1}{2}}e^{5cT}\exp ( \frac{(M_n)^{\mathfrak{p}/2}}{n})}{(M_n)^\frac{1}{2}}
\right)^n\leq \varepsilon\right\}.\label{c14c}
\end{align}
For every $\epsilon\in (0,1)$, $d\in \N$ let 
\begin{align}\label{c20b}
\varepsilon(d,\epsilon)=\frac{\epsilon}{\kappa d^\kappa},\quad n(d,\epsilon)= N_{\varepsilon(d,\epsilon)}.
\end{align}
For every $\delta\in (0,1)$ let
\begin{align}
C_\delta=\sup_{\varepsilon\in (0,1)}\left[\varepsilon^{4+\delta}(3M_{N_\varepsilon})^{2N_\varepsilon}\right].\label{c21b}
\end{align}
Next,
\cite[Lemma~4.5]{HJKP2021} and the definition of $(M_n)_{n\in \N}$ show
 that
$\liminf_{j\to\infty}M_j=\infty$, $\limsup_{j\to\infty} \frac{(M_j)^{\mathfrak{p}/2}}{j}<\infty$, and $\sup_{k\in \N}\frac{M_{k+1}}{M_k}<\infty $.
Then 
\eqref{c21b}  and
\cite[Lemma~5.1]{AJKP2024}
(applied with
$L\gets 1$, $T\gets \mathfrak{p}^{\frac{1}{2}}e^{5cT}-1$, $(m_{k})_{k\in \N}\gets (M_{k})_{k\in \N}$
in the notation of \cite[Lemma~5.1]{AJKP2024})
show for all $\delta, \varepsilon\in (0,1)$ that
$N_\varepsilon<\infty$ and
$C_\delta<\infty$.
Next, \eqref{t43a} and \eqref{c14c} 
show for all $d\in \N$, $\epsilon\in (0,1)$ that
\begin{align} \begin{split} 
&
\sup_{t\in [0,T],x\in [0,\mathbf{k}]^d}\left\lVert
{U}_{n(d,\varepsilon),M_{n(d,\varepsilon)}}^{d,0,(M_{n(d,\varepsilon)})^{{n(d,\varepsilon)}}}(t,x)-u^d(t,x)\right\rVert_\mathfrak{p}
\leq 
\kappa d^\kappa
\mathfrak{p}^{\frac{n}{2}}e^{5cT{N_{\varepsilon(d,\epsilon)}}}e^{(M_{{N_{\varepsilon(d,\epsilon)}}})^{\mathfrak{p}/2}/\mathfrak{p}}(M_{{N_{\varepsilon(d,\epsilon)}}})^{-{N_{\varepsilon(d,\epsilon)}}/2}\\
&\leq\kappa d^\kappa \varepsilon(d,\epsilon)=\epsilon. \end{split}\label{t32}
\end{align}
Next, \eqref{c29} show for all $d,K,n,m\in \N$ that
\begin{align}\label{c29a}
C_{0,m}^{d,K}=0,\quad 
C_{n,m}^{d,K}\leq 2cd^cK
m^n
+ \sum_{\ell=0}^{n-1}m^{n-\ell}\left(3cd^cK+
C_{\ell,m}^{d,K}+
C_{\ell-1,m}^{d,K}\right).
\end{align}
This and \cite[Lemma~3.14]{beck2020overcomingElliptic} show for all $d,K,m,n\in \N$ that
\begin{align}
C_{n,m}^{d,K}\leq 3cd^cK (3m)^n.
\end{align}
This, \eqref{c20b}, and \eqref{c21b}
show that for all $d\in \N$, $\epsilon\in (0,1)$ that
\begin{align} \begin{split} 
&
C^{d,(M_{n(d,\epsilon)})^{n(d,\epsilon)}}_{n(d,\epsilon),M_{n(d,\epsilon)}}
=
C_{N_{\varepsilon(d,\epsilon)},M_{N_{\varepsilon(d,\epsilon)}}}^{d,(M_{N_{\varepsilon(d,\epsilon)}})^{N_{\varepsilon(d,\epsilon)}}}\leq cd^c (3M_{N_{\varepsilon(d,\epsilon)}})^{2N_{\varepsilon(d,\epsilon)}}\\
&\leq cd^cC_{\delta}(\varepsilon(d,\varepsilon))^{-(4+\delta)}
=cd^cC_{\delta}\left(\frac{\epsilon}{\kappa d^\kappa}\right)^{-(4+\delta)}
=cd^c(\kappa d^\kappa)^{4+\delta}\epsilon^{-(4+\delta)}.
\end{split}
\end{align}
This, \eqref{t32}, \eqref{c20b}, the fact that $\forall\,\varepsilon\in(0,1)\colon N_\varepsilon<\infty$, and the fact that $\forall\,\delta\in (0,1)\colon C_\delta<\infty $ complete the proof of \cref{c40}.
\end{proof}

\section{DNNs}\label{c36}
Our main goal in this section is to prove \cref{k04c}, which states that the MLP approximations defined by \eqref{k04d} can be represented by DNNs. Furthermore, in \cref{k04c} we also bound the length and the supremum norm of the vectors of their layer dimensions. 
Note that in this paper we consider different types of activation functions than ReLU. 

\subsection{DNN representation of the one-dimensional identity}
In Lemma~\ref{a10} and~\ref{a10b} we prove that the identity in $\R$ can be represented by a DNN when considering ReLU, leaky ReLU, or softplus activation function. Later in \cref{s02} as well as in the setting of \cref{a08} we consider an
arbitrary activation function but assume that the identity can be represented by a DNN.
\begin{lemma}\label{a10}
Assume \cref{m07}.
Let $\alpha\in [0,\infty)$ satisfy for all $x\in \R^d$ that $a(x)=\max \{x,\alpha x\}$. Then 
$\mathrm{Id}_\R\in\mathcal{R}(  \{\Phi\in \mathbf{N}\colon \mathcal{D}(\Phi)=(1,2,1)\})$.
\end{lemma}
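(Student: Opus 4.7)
The plan is to construct an explicit shallow DNN with architecture $(1,2,1)$ whose realization equals $\mathrm{Id}_\R$. The first step I would carry out is to establish the purely algebraic identity $a(x)-a(-x)=(1+\alpha)x$ for every $x\in\R$. This follows by a four-way case split: in the regime $\alpha\in[0,1]$ one has $a(y)=y$ for $y\ge 0$ and $a(y)=\alpha y$ for $y\le 0$, while in the regime $\alpha>1$ the roles are swapped; in each of the four sub-cases (determined by the signs of $x$ and by whether $\alpha\le 1$ or $\alpha>1$) both $a(x)$ and $a(-x)$ reduce to an explicit linear function of $x$ and one line of arithmetic confirms the identity. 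Since $\alpha\ge 0$, we have $1+\alpha\ge 1>0$, so we may safely divide by $1+\alpha$.

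Next, I would define the parameter $\Phi=((W_1,B_1),(W_2,B_2))\in(\R^{2\times 1}\times\R^{2})\times(\R^{1\times 2}\times\R^{1})\subseteq\mathbf{N}$ by
\begin{equation*}
W_1=\begin{pmatrix}1\\-1\end{pmatrix},\qquad B_1=\begin{pmatrix}0\\0\end{pmatrix},\qquad W_2=\tfrac{1}{1+\alpha}\bigl(1,\ -1\bigr),\qquad B_2=0.
\end{equation*}
By the definitions in \cref{m07} this parameter belongs to the summand with $k_0=k_2=1$ and $k_1=2$ of the union defining $\mathbf{N}$, hence $\mathcal{D}(\Phi)=(1,2,1)$. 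For every $x\in\R$ the hidden-layer activation is $\mathbf{A}_2(W_1x+B_1)=(a(x),a(-x))^{\top}$, so applying the definition of $\mathcal{R}$ and invoking the identity from the previous step yields $(\mathcal{R}(\Phi))(x)=W_2\mathbf{A}_2(W_1x+B_1)+B_2=\tfrac{1}{1+\alpha}(a(x)-a(-x))=x$. This proves $\mathrm{Id}_\R\in\mathcal{R}(\{\Phi\in\mathbf{N}\colon\mathcal{D}(\Phi)=(1,2,1)\})$, which is the claim.

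No step of this argument is expected to be a real obstacle: once one notices that $a(\cdot)-a(-\cdot)$ is linear with slope $1+\alpha$, the required weights $(1,-1)$ in the hidden layer and $\tfrac{1}{1+\alpha}(1,-1)$ in the output layer essentially pick themselves, and the rest is just matching indices with the formalism of \cref{m07}.
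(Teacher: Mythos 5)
Your proof is correct and is the standard explicit construction; the paper simply cites \cite[Lemma~3.5]{AJK+2023} rather than spelling it out, and the cited lemma carries out essentially the same computation (split the input into $x$ and $-x$, apply the activation, recombine with coefficients $\tfrac{1}{1+\alpha}(1,-1)$). One small remark worth keeping in mind: the lemma as stated allows $\alpha=1$, in which case $a=\mathrm{Id}_\R$ already, and your identity $a(x)-a(-x)=(1+\alpha)x$ and the resulting network still work without modification, so no case is missed; the exclusion $\alpha\neq 1$ that appears later in \cref{a08b} is not needed here.
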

\begin{proof}
[Proof of \cref{a10}]See \cite[Lemma~3.5]{AJK+2023}.
\end{proof}
\begin{lemma}\label{a10b}
Assume \cref{m07} and assume for all $x\in \R$ that
$a(x)=\ln (1+e^x)$. Then 
$\mathrm{Id}_\R\in\mathcal{R}( \{\Phi\in \mathbf{N}\colon \mathcal{D}(\Phi)=(1,2,1)\})$.
\end{lemma}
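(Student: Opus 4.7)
The plan is to exhibit an explicit DNN $\Phi\in\mathbf{N}$ with architecture $\mathcal{D}(\Phi)=(1,2,1)$ whose realization equals the identity on $\R$. The key algebraic observation is the identity
\begin{equation*}
\ln(1+e^{x}) - \ln(1+e^{-x})
= \ln\!\left(\frac{1+e^{x}}{1+e^{-x}}\right)
= \ln\!\left(\frac{e^{x}(1+e^{x})}{e^{x}+1}\right)
= \ln(e^{x}) = x,
\end{equation*}
which is valid for every $x\in\R$. In other words, softplus satisfies $a(x)-a(-x)=x$, and this is exactly the symmetry we exploit to reproduce the identity with a single hidden layer of width two.

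Concretely, I would take $H=1$, $k_0=k_2=1$, $k_1=2$ and set
\begin{equation*}
W_1=\begin{pmatrix}1\\-1\end{pmatrix}\in\R^{2\times 1},\quad B_1=\begin{pmatrix}0\\0\end{pmatrix}\in\R^{2},\quad W_2=\begin{pmatrix}1 & -1\end{pmatrix}\in\R^{1\times 2},\quad B_2=0\in\R,
\end{equation*}
and let $\Phi=((W_1,B_1),(W_2,B_2))\in\mathbf{N}$. Then for every $x\in\R$ the hidden activation is $\mathbf{A}_{2}(W_1 x+B_1)=(a(x),a(-x))=(\ln(1+e^{x}),\ln(1+e^{-x}))$, and by the definition of $\mathcal{R}$ in \cref{m07},
\begin{equation*}
(\mathcal{R}(\Phi))(x)=W_2\begin{pmatrix}\ln(1+e^{x})\\ \ln(1+e^{-x})\end{pmatrix}+B_2=\ln(1+e^{x})-\ln(1+e^{-x})=x,
\end{equation*}
using the algebraic identity above. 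Since $\mathcal{D}(\Phi)=(k_0,k_1,k_2)=(1,2,1)$, this shows $\mathrm{Id}_{\R}\in\mathcal{R}(\{\Phi\in\mathbf{N}\colon\mathcal{D}(\Phi)=(1,2,1)\})$, as required.

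There is essentially no obstacle here: the only ingredient beyond bookkeeping is the identity $a(x)-a(-x)=x$ specific to softplus, which replaces the identity $\max\{x,\alpha x\}-\max\{-x,-\alpha x\}=(1-\alpha)x$ used in the (leaky) ReLU case of \cref{a10}. The construction is a direct analogue of that proof and requires no further estimates.
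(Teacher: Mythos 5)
Your proof is correct and complete. The identity $a(x)-a(-x)=\ln(1+e^x)-\ln(1+e^{-x})=\ln\bigl(\tfrac{e^x(1+e^x)}{1+e^x}\bigr)=x$ is the right algebraic fact, and the network $\Phi=((W_1,B_1),(W_2,B_2))$ with $W_1=\binom{1}{-1}$, $B_1=0$, $W_2=(1,-1)$, $B_2=0$ realizes it exactly, matching the definition of $\mathcal{R}$ in \cref{m07} and having $\mathcal{D}(\Phi)=(1,2,1)$. The paper itself does not present a proof of \cref{a10b} but simply cites \cite[Lemma~3.8]{AJK+2023}; your argument is a short, self-contained version of what that reference establishes, and it is the natural analogue of the ReLU/leaky-ReLU construction in \cref{a10}.
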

\begin{proof}
[Proof of \cref{a10b}] See \cite[Lemma~3.8]{AJK+2023}.
\end{proof}
\subsection{DNN representation of the $d$-dimensional identity}In \cref{a01} below we prove that if the identity in $\R$ can be represented by a DNN then the identity in $\R^d$ can also be represented by a DNN. 
\begin{lemma}\label{a01}
Assume \cref{m07}.  Let $d,{\mathfrak{d}}\in \N$, $\phi \in \mathbf{N}$
satisfy for all $x\in\R$ that 
$\mathcal{D}(\phi)=(1,{\mathfrak{d}},1)$
 and $(\mathcal{R}(\phi))(x)=x$. Then there exists 
$\Phi \in \mathbf{N}$ which satisfies for all $x\in \R^d$ that
$\mathcal{D}(\Phi)=(d,{\mathfrak{d}} d,d)\in \R^{3}$ and $(\mathcal{R}(\Phi))(x)=x$.
\end{lemma}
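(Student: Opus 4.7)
The plan is to build $\Phi$ by running $d$ independent copies of $\phi$ in parallel, stacked block-diagonally into a single two-layer network of width $\mathfrak{d}d$. Write $\phi = ((w_1,b_1),(w_2,b_2))$ with $w_1\in\R^{\mathfrak{d}\times 1}$, $b_1\in\R^{\mathfrak{d}}$, $w_2\in\R^{1\times\mathfrak{d}}$, $b_2\in\R$, so that by hypothesis
\begin{align}
w_2\,\mathbf{A}_{\mathfrak{d}}(w_1 x + b_1) + b_2 = x \qquad \text{for every } x\in\R.
\end{align}

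Define $W_1\in\R^{\mathfrak{d}d\times d}$ to be the block-diagonal matrix with $d$ copies of $w_1$ on the diagonal, $B_1\in\R^{\mathfrak{d}d}$ to be the vertical concatenation of $d$ copies of $b_1$, $W_2\in\R^{d\times\mathfrak{d}d}$ to be the block-diagonal matrix with $d$ copies of $w_2$ on the diagonal, and $B_2\in\R^d$ to be the vertical concatenation of $d$ copies of $b_2$. Set $\Phi=((W_1,B_1),(W_2,B_2))\in\prod_{n=1}^{2}(\R^{k_n\times k_{n-1}}\times\R^{k_n})$ with $(k_0,k_1,k_2)=(d,\mathfrak{d}d,d)$. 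Then $\Phi\in\mathbf{N}$ and $\mathcal{D}(\Phi)=(d,\mathfrak{d}d,d)$ by construction.

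It remains to verify $\mathcal{R}(\Phi)(x)=x$ for all $x\in\R^d$. Writing $x=(x_1,\ldots,x_d)$, the block-diagonal structure of $W_1$ and $B_1$ gives that $W_1 x + B_1$ is the vertical stacking of the vectors $w_1 x_i + b_1\in\R^{\mathfrak{d}}$ for $i\in\{1,\ldots,d\}$. Since $\mathbf{A}_{\mathfrak{d}d}$ acts componentwise, $\mathbf{A}_{\mathfrak{d}d}(W_1 x+B_1)$ is the stacking of $\mathbf{A}_{\mathfrak{d}}(w_1 x_i+b_1)$. The block-diagonal structure of $W_2$ together with $B_2$ then yields that the $i$-th coordinate of $W_2\mathbf{A}_{\mathfrak{d}d}(W_1 x+B_1)+B_2$ equals $w_2\,\mathbf{A}_{\mathfrak{d}}(w_1 x_i+b_1)+b_2$, which by the hypothesis on $\phi$ equals $x_i$. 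Hence $\mathcal{R}(\Phi)(x)=x$, completing the argument.

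There is no real obstacle here; the only thing to be careful about is matching the componentwise action of $\mathbf{A}_{\mathfrak{d}d}$ with the block structure, which is automatic once the blocks are placed diagonally. This block-diagonal parallelization is the standard device that will be reused throughout the DNN calculus in the sequel.
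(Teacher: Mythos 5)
Your proposal is correct and matches the paper's proof essentially verbatim: both construct $\Phi$ by placing $d$ copies of $\phi$'s weight matrices and bias vectors block-diagonally (resp.\ stacked), yielding architecture $(d,\mathfrak{d}d,d)$, and both verify the realization componentwise using the fact that the activation acts entrywise.
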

\begin{proof}
[Proof of \cref{a01}] Let 
$W_1\in \R^{{\mathfrak{d}}\times 1}$, 
$B_1\in \R^{\mathfrak{d}}$,
$W_2\in \R^{1\times {\mathfrak{d}}}$,
$B_2\in \R$
satisfy
$\phi=((W_1,B_1),(W_2,B_2))$. Then by definition for all 
$x^0\in\R $, $x^1\in\R^{\mathfrak{d}} $
with
$x^1= \mathbf{A}_{\mathfrak{d}}(W_1x^0+B_1)$
we have that $(\mathcal{R}(\phi)) (x^0)=W_2x^1 +B_2$,
i.e.,
\begin{align}
(\mathcal{R}(\phi)) (x^0)=W_2\mathbf{A}_{\mathfrak{d}}(W_1x^0+B_1) +B_2.\label{a02}
\end{align} Now, let 
$\Phi\in \mathbf{N}$,
$\widehat{W}_1\in \R^{{\mathfrak{d}} d\times 
d}$, $\widehat{B}_1\in \R^{{\mathfrak{d}} d}$,
$\widehat{W}_{2}\in \R^{d\times {\mathfrak{d}} d}$,
$\widehat{B}_{2}\in \R^{d}$
 satisfy for all $n\in [1,H]\cap\Z$ that
$\Phi=((\widehat{W}_1,\widehat{B}_1), (\widehat{W}_2,\widehat{B}_2))$,
\begin{align}
\widehat{W}_1=\begin{pmatrix}
W_1	&		&	\\
	& 	\ddots&	\\
	&		&W_1\\
\end{pmatrix},\quad 
\widehat{B}_1=
\begin{pmatrix}
B_1\\
\vdots\\
B_1
\end{pmatrix}
,\quad
\widehat{W}_{2}=\begin{pmatrix}
W_2	&		&	\\
	& 	\ddots&	\\
	&		&W_2\\
\end{pmatrix},\quad 
\widehat{B}_{2}=
\begin{pmatrix}
B_2\\
\vdots\\
B_2
\end{pmatrix}.
\end{align}
Then $\mathcal{D}(\Phi)=(d,{\mathfrak{d}} d,d)\in \R^{3}$. Furthermore, \eqref{a02} shows that
for all $x^0=(x^0_1,\ldots,x^0_d)^\top\in\R^d $, $x^1\in \R^{{\mathfrak{d}} d}$ satisfying that
$
x^1=\mathbf{A}_{{\mathfrak{d}} d}(\widehat{W}_nx^{0}+\widehat{B}_n)
$
we have that
\begin{align}
x^1=\mathbf{A}_{{\mathfrak{d}} d}(\widehat{W}^1x^0+\widehat{B}_1)
=
\mathbf{A}_{{\mathfrak{d}} d}
\begin{pmatrix}
{W}_1x^0_1+B_1\\
\vdots
\\
{W}_1x^0_d+B_1
\end{pmatrix}
=
\begin{pmatrix}
\mathbf{A}_{\mathfrak{d}}({W}_1x^0_1+B_1)\\
\vdots
\\
\mathbf{A}_{\mathfrak{d}}(
{W}_1x^0_d+B_1)
\end{pmatrix}
\end{align}
and 
\begin{align}
(\mathcal{R}(\Phi)) (x^0)&=W_2x^1 +B_2
=
\begin{pmatrix}W_2
\mathbf{A}_{\mathfrak{d}}({W}_1x^0_1+B_1)+B_2\\
\vdots
\\W_2
\mathbf{A}_{\mathfrak{d}}(
{W}_1x^0_d+B_1)+B_2
\end{pmatrix}
=\begin{pmatrix}
x^0_1\\
\vdots\\
x^0_d
\end{pmatrix}=x^0.
\end{align}
This completes the proof of \cref{a01}.  
\end{proof}

\subsection{Approximation of one-dimensional Lipschitz functions by DNNs}In Lemmas~\ref{p10} and~\ref{p10a} we prove that one dimensional Lipschitz functions can be well approximated by DNNs. 
\begin{lemma}\label{p10}
Assume \cref{m07}. Let $\alpha\in [0,\infty)\setminus \{1\}$ and assume for all $x\in \R$ that
$a(x)= \max\{x,\alpha x\}$.
 Let $f\in C(\R,\R)$, $L\in \R$, $q\in (1,\infty)$ satisfy for all $x,y\in \R$ that
$\lvert f(x)-f(y)\rvert\leq L\lvert x-y\rvert$.
Then there exist $c\in (0,\infty)$,
$(f_\varepsilon)_{\varepsilon\in (0,1)}\subseteq C(\R,\R)$ such that for all $\varepsilon\in (0,1) $, $x,y\in \R$
we have that 
$\lvert f_\varepsilon(x)-f_\varepsilon(y)\rvert\leq L\lvert x-y\rvert$, 
$\lvert f_\varepsilon(x)-f(x)\rvert\leq \varepsilon(1+\lvert x\rvert^q)$,
 and
$ f_\varepsilon\in \mathcal{R}(\{\Phi\in \mathbf{N}\colon \dim(\mathcal{D}(\Phi ))=3
, \supnorm{\mathcal{D}(\Phi )}\leq c\varepsilon^{-c}
\})$.
\end{lemma}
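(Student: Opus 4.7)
The plan is to build $f_\varepsilon$ as a continuous piecewise linear interpolant of $f$ on a sufficiently large interval $[-R_\varepsilon,R_\varepsilon]$, extended by the two tail-constants outside, and then to realize this piecewise linear function as a single-hidden-layer leaky ReLU network. First I would choose the two scales $R_\varepsilon:=\max\{1,(2L/\varepsilon)^{1/(q-1)}\}$ (so that the Lipschitz tail error falls below $\tfrac{1}{2}\varepsilon|x|^q$) and $h_\varepsilon:=\varepsilon/(2L+1)$ (so that the interpolation error on the bounded part is below $\varepsilon$), and then place equally spaced breakpoints $c_0=-R_\varepsilon<c_1<\cdots<c_{N_\varepsilon}=R_\varepsilon$ with $c_{k+1}-c_k\le h_\varepsilon$, so that $N_\varepsilon\le\lceil 2R_\varepsilon/h_\varepsilon\rceil$ is polynomially bounded in $\varepsilon^{-1}$.

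Next I would set $f_\varepsilon$ to linearly interpolate $f$ at the nodes $c_0,\ldots,c_{N_\varepsilon}$ and to equal $f(-R_\varepsilon)$ on $(-\infty,-R_\varepsilon]$ and $f(R_\varepsilon)$ on $[R_\varepsilon,\infty)$. The $L$-Lipschitz property of $f_\varepsilon$ is immediate since each interior segment has slope bounded by $L$ by the Lipschitz assumption on $f$ and the two tails have slope zero. The pointwise error splits into two regions. On $[-R_\varepsilon,R_\varepsilon]$ the triangle inequality combined with the Lipschitz bounds yields $|f_\varepsilon(x)-f(x)|\le 2Lh_\varepsilon\le\varepsilon$. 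On the tails $|f_\varepsilon(x)-f(x)|\le L|x|\le\tfrac{1}{2}\varepsilon|x|^q\le\varepsilon(1+|x|^q)$, where the middle inequality uses $\varepsilon|x|^{q-1}\ge 2L$, which holds on $\{|x|\ge R_\varepsilon\}$ by the choice of $R_\varepsilon$.

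It then remains to express $f_\varepsilon$ as a leaky ReLU network with one hidden layer. Since $\alpha\ne 1$, the function $x\mapsto a(x-c_k)$ is continuous piecewise linear with a single kink at $c_k$ of slope jump $1-\alpha\ne 0$; hence, letting $\Delta_k$ denote the slope jump of $f_\varepsilon$ at $c_k$, the choice $w_{\varepsilon,k}:=\Delta_k/(1-\alpha)$ reproduces each kink exactly. Because both tails of $f_\varepsilon$ are constant, $\sum_{k=0}^{N_\varepsilon}\Delta_k=0$, so the superposition $\sum_k w_{\varepsilon,k}\,a(\cdot-c_k)$ is asymptotically constant on each side; a short Abel summation using that $f_\varepsilon$ interpolates $f$ gives $\sum_k w_{\varepsilon,k}c_k=(f(-R_\varepsilon)-f(R_\varepsilon))/(1-\alpha)$, so that the choice $b_\varepsilon:=(f(-R_\varepsilon)-\alpha f(R_\varepsilon))/(1-\alpha)$ matches the two tail constants simultaneously. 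This yields the identity
\[
f_\varepsilon(x)=b_\varepsilon+\sum_{k=0}^{N_\varepsilon}w_{\varepsilon,k}\,a(x-c_k)\qquad\text{for all } x\in\R,
\]
which exhibits $f_\varepsilon=\mathcal{R}(\Phi_\varepsilon)$ for the network $\Phi_\varepsilon=((W_1,B_1),(W_2,B_2))\in\mathbf{N}$ with $W_1=(1,\ldots,1)^\top$, $B_1=(-c_0,\ldots,-c_{N_\varepsilon})^\top$, $W_2=(w_{\varepsilon,0},\ldots,w_{\varepsilon,N_\varepsilon})$, and $B_2=b_\varepsilon$. In particular $\mathcal{D}(\Phi_\varepsilon)=(1,N_\varepsilon+1,1)$, so $\dim(\mathcal{D}(\Phi_\varepsilon))=3$ and $\supnorm{\mathcal{D}(\Phi_\varepsilon)}=N_\varepsilon+1\le c\varepsilon^{-c}$ for a constant $c$ depending only on $L$ and $q$.

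The main obstacle is the algebraic matching of the tail intercepts in the last step: it relies crucially on $\alpha\ne 1$ (otherwise $a$ is the identity and a single hidden layer cannot produce any kink), and on arranging the output weights so that the residual linear pieces generated by the activations on each of the two tails collapse into constants that agree with $f(\pm R_\varepsilon)$. Once this matching is performed, every remaining verification, including the width estimate and the Lipschitz and error bounds, is routine.
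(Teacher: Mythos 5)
The paper itself does not prove Lemma \ref{p10} but defers to the external reference \cite[Corollary~4.13]{AJK+2023}. Your proposal supplies a self-contained proof via piecewise-linear interpolation on a truncated interval realized as a one-hidden-layer network, which is the standard device for this kind of statement and is almost certainly the same mechanism used in the cited reference. All the quantitative estimates you give (the choice $R_\varepsilon=\max\{1,(2L/\varepsilon)^{1/(q-1)}\}$, $h_\varepsilon=\varepsilon/(2L+1)$, the $2Lh_\varepsilon$ interpolation bound, the tail bound via $L|x|\le \tfrac12\varepsilon|x|^q$, the width $N_\varepsilon+1\le c\varepsilon^{-c}$) check out, and the telescoping argument $\sum_k\Delta_k=0$ together with Abel summation $\sum_k\Delta_k c_k=f(-R_\varepsilon)-f(R_\varepsilon)$ is exactly what makes both tail intercepts match simultaneously; this works precisely because the slope jump of $a$ equals the difference of its asymptotic slopes.

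There is, however, a sign oversight in the case $\alpha>1$. For $\alpha\in[0,1)$ one has $a(x)=\alpha x$ on $(-\infty,0]$ and $a(x)=x$ on $[0,\infty)$, so the kink of $a(\cdot-c_k)$ at $c_k$ has size $1-\alpha$, and your formulas $w_{\varepsilon,k}=\Delta_k/(1-\alpha)$ and $b_\varepsilon=(f(-R_\varepsilon)-\alpha f(R_\varepsilon))/(1-\alpha)$ are correct. For $\alpha>1$, however, $a(x)=x$ on $(-\infty,0]$ and $a(x)=\alpha x$ on $[0,\infty)$, so the kink size is $\alpha-1>0$, not $1-\alpha$; plugging your $w_{\varepsilon,k}$ into the superposition then produces kinks of size $-\Delta_k$, i.e., the wrong sign, and likewise $b_\varepsilon$ does not match the tails. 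The statement you actually need is that the kink size is $|1-\alpha|$ (equivalently, the difference $m_+-m_-$ of the right and left slopes of $a$), and the correct choice is $w_{\varepsilon,k}=\Delta_k/(m_+-m_-)$ with $b_\varepsilon=f(-R_\varepsilon)+m_-\cdot(f(R_\varepsilon)-f(-R_\varepsilon))/(m_+-m_-)$, where $(m_-,m_+)=(\alpha,1)$ if $\alpha<1$ and $(m_-,m_+)=(1,\alpha)$ if $\alpha>1$. Alternatively you can reduce $\alpha>1$ to $\alpha<1$ by observing $\max\{x,\alpha x\}=\alpha\max\{x,x/\alpha\}$. Apart from this easily repaired sign issue, the argument is sound.
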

\begin{proof}
[Proof of \cref{p10}]See \cite[Corollary~4.13]{AJK+2023}.
\end{proof}
\begin{lemma}\label{p10a}
Assume \cref{m07}. Assume for all $x\in \R$ that
$a(x)= \ln (1+e^x)$.
 Let $f\in C(\R,\R)$, $L\in \R$, $q\in (1,\infty)$ satisfy for all $x,y\in \R$ that
$\lvert f(x)-f(y)\rvert\leq L\lvert x-y\rvert$.
Then there exist $c\in (0,\infty)$,
$(f_\varepsilon)_{\varepsilon\in (0,1)}\subseteq C(\R,\R)$ such that for all $\varepsilon\in (0,1) $, $x,y\in \R$
we have that 
$\lvert f_\varepsilon(x)-f_\varepsilon(y)\rvert\leq L\lvert x-y\rvert$, 
$\lvert f_\varepsilon(x)-f(x)\rvert\leq \varepsilon(1+\lvert x\rvert^q)$,
 and
$ f_\varepsilon\in \mathcal{R}(\{\Phi\in \mathbf{N}\colon \dim(\mathcal{D}(\Phi ))=3
, \supnorm{\mathcal{D}(\Phi )}\leq c\varepsilon^{-c}
\})$.
\end{lemma}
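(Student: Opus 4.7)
The plan is to mirror the shallow-network construction underlying \cref{p10} (see \cite[Corollary~4.13]{AJK+2023}) but with the ReLU hinges replaced by suitably scaled softplus hinges. Fix $\varepsilon\in(0,1)$, set $R := \varepsilon^{-1/(q-1)}$ and $h := \varepsilon/(2L)$, and let $I_\varepsilon f$ be the piecewise-linear interpolant of $f$ on the grid $\{-R+kh\colon 0\le k\le 2R/h\}$, extended by constants outside $[-R,R]$. Then $I_\varepsilon f$ is $L$-Lipschitz, differs from $f$ by at most $Lh \le \varepsilon/2$ on $[-R,R]$, and differs from $f$ by at most $L|x|+|f(0)|\le \tfrac{\varepsilon}{2}(1+|x|^q)$ outside, by the choice of $R$. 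A standard hinge decomposition writes $I_\varepsilon f$ as $I_\varepsilon f(x)=\sum_{j=1}^{N} w_j\max\{u_j x+v_j,0\}+b$ with $N\le c\varepsilon^{-c}$ and $\sum_j|w_j|\le c\varepsilon^{-c}$, so $I_\varepsilon f\in\mathcal{R}(\{\Phi: \mathcal{D}(\Phi)=(1,N,1)\})$ in the ReLU case.

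Next I would substitute each hinge by a scaled softplus: using the elementary bound $|\lambda^{-1}\ln(1+e^{\lambda y})-\max\{y,0\}|\le \lambda^{-1}\ln 2$ valid for all $y\in\R$ and $\lambda>0$, define
\begin{equation}
\tilde f_\varepsilon(x):= \sum_{j=1}^{N}\frac{w_j}{\lambda}\,a\!\left(\lambda(u_j x+v_j)\right)+b,
\end{equation}
which is a one-hidden-layer softplus network of architecture $(1,N,1)$ (absorb $\lambda$ into the input weights $(u_j,v_j)$ and $\lambda^{-1}$ into the output weights $w_j$). Then $\|\tilde f_\varepsilon-I_\varepsilon f\|_\infty\le \lambda^{-1}\ln 2\cdot\sum_j|w_j|\le c\lambda^{-1}\varepsilon^{-c}$, which can be made at most $\varepsilon/4$ by choosing $\lambda$ polynomially large in $\varepsilon^{-1}$; combining this with the bound on $|I_\varepsilon f-f|$ yields $|\tilde f_\varepsilon(x)-f(x)|\le \tfrac{3\varepsilon}{4}(1+|x|^q)$ for all $x\in\R$.

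The main obstacle is enforcing the exact Lipschitz constant $L$ rather than merely $L(1+o(1))$: since the derivative of softplus is strictly positive everywhere and neighbouring hinges overlap, $\tilde f_\varepsilon$ is only approximately $L$-Lipschitz. I would handle this by post-rescaling: summing the softplus derivative contributions of the $N$ terms gives a quantitative upper bound $\tilde L:=\mathrm{Lip}(\tilde f_\varepsilon)\le L+\kappa\varepsilon$ for some $\kappa$ controlled by $\sum_j|w_j u_j|$ (which we can also tune via $\lambda$ and the hinge weights). Setting
\begin{equation}
f_\varepsilon(x):=\tilde f_\varepsilon(0)+\frac{L}{\tilde L}\bigl(\tilde f_\varepsilon(x)-\tilde f_\varepsilon(0)\bigr),
\end{equation}
the map $f_\varepsilon$ is exactly $L$-Lipschitz and, since the rescaling is linear on the output, still lies in $\mathcal{R}(\{\Phi: \mathcal{D}(\Phi)=(1,N,1)\})$ (one multiplies each $w_j/\lambda$ by $L/\tilde L$ and adjusts the output bias). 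Finally $|f_\varepsilon(x)-\tilde f_\varepsilon(x)|\le |1-L/\tilde L|\cdot L|x|\le (\kappa\varepsilon/\tilde L)\cdot L|x|\le \tfrac{\varepsilon}{4}(1+|x|^q)$ for large enough $\lambda$, so the triangle inequality gives $|f_\varepsilon(x)-f(x)|\le \varepsilon(1+|x|^q)$ as required. The width bound $\supnorm{\mathcal{D}(\Phi_\varepsilon)}\le c\varepsilon^{-c}$ is inherited from the ReLU construction since $\lambda$ only affects the numerical values of the parameters, not the architecture.
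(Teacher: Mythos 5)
Your proposal gives a self-contained construction where the paper simply cites \cite[Corollary~4.14]{AJK+2023}, and the overall strategy (piecewise-linear interpolation on a polynomially growing window, ReLU hinge decomposition, softplus smoothing of each hinge by scaling the input by $\lambda$ and the output weight by $\lambda^{-1}$) is sound and likely mirrors the cited argument. Two remarks. First, a small parameter adjustment: with $R=\varepsilon^{-1/(q-1)}$ the claimed tail bound $L|x|+|f(0)|\le\tfrac{\varepsilon}{2}(1+|x|^q)$ for $|x|\ge R$ fails whenever $L>1/2$ or $|f(0)|>\varepsilon/2$. Use instead the sharper estimate $|I_\varepsilon f(x)-f(x)|\le L(|x|-R)$ on $\{|x|>R\}$ together with $R=(2L/\varepsilon)^{1/(q-1)}$, so that $|x|^{q-1}\ge 2L/\varepsilon$ gives $L|x|\le\tfrac{\varepsilon}{2}|x|^q$; the asymptotics $N,\lambda=O(\varepsilon^{-c})$ are unchanged. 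Second, and more substantively, the ``main obstacle'' you identify is not actually present, so your Lipschitz-rescaling step is superfluous. Taking the standard hinge decomposition $I_\varepsilon f(x)=b+\sum_{j=0}^{N-1}w_j\max\{x-t_j,0\}$ with $t_0<\cdots<t_{N-1}$ and $\sum_jw_j=0$ (constancy of $I_\varepsilon f$ near $+\infty$), and writing $s_j:=\sum_{i\le j}w_i\in[-L,L]$ for the partial slopes with $s_{-1}=s_{N-1}=0$, Abel summation gives
\[
\tilde f_\varepsilon'(x)=\sum_{j=0}^{N-2}s_j\bigl[\sigma(\lambda(x-t_j))-\sigma(\lambda(x-t_{j+1}))\bigr],\qquad \sigma(u)=\frac{1}{1+e^{-u}},
\]
a combination of the interpolant's slopes with nonnegative weights (since $\sigma$ is increasing and $t_j<t_{j+1}$) summing to $\sigma(\lambda(x-t_0))-\sigma(\lambda(x-t_{N-1}))<1$. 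Hence $|\tilde f_\varepsilon'|\le L$ pointwise and $\tilde f_\varepsilon$ is already exactly $L$-Lipschitz; overlapping softplus hinges cannot overshoot $L$. Your renormalization reduces to the identity and is harmless, but the argument is cleaner without it.
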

\begin{proof}
[Proof of \cref{p10a}]See \cite[Corollary~4.14]{AJK+2023}.
\end{proof}

\subsection{Properties of operations associated to DNNs}
\begin{setting}\label{m08}Assume \cref{m07}.
Let $\odot \colon \mathbf{D}\times \mathbf{D} \to\mathbf{D} $ satisfy 
for all $H_1,H_2\in \N$, $ \alpha=(\alpha_0,\alpha_1,\ldots,\alpha_{H_1},\alpha_{H_1+1})\in\N^{H_1+2}$, $\beta=(\beta_0,\beta_1,\ldots,\beta_{H_2},\beta_{H_2+1})\in\N^{H_2+2}$
that $\alpha\odot \beta
=(\alpha_0,\ldots,\alpha_{H_1},\beta_1,\ldots,\beta_{H_2+1})$. Let $\boxplus \colon \mathbf{D}\times \mathbf{D} \to\mathbf{D}  $ satisfy
for all $H\in \N$, 
$\alpha= (\alpha_0,\alpha_1,\ldots,\alpha_{H},\alpha_{H+1})\in \N^{H+2}$,
$\beta= (\beta_0,\beta_1,\beta_2,\ldots,\beta_{H},\beta_{H+1})\in \N^{H+2}$
that
$
\alpha \boxplus \beta =(\alpha_0,\alpha_1+\beta_1,\ldots,\alpha_{H}+\beta_{H},\beta_{H+1})\in \N^{H+2}
$.  
\end{setting}

\begin{lemma}\label{m09}
Assume \cref{m08} and let $\alpha,\beta,\gamma\in \mathbf{D}$. Then 
$(\alpha\odot\beta)\odot\gamma
=\alpha\odot(\beta\odot\gamma)$.
\end{lemma}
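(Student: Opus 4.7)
The plan is to verify associativity by direct computation from the definition of $\odot$. Since $\odot$ is essentially concatenation in which the last coordinate of the left operand and the first coordinate of the right operand collapse into a single shared layer-dimension, both $(\alpha\odot\beta)\odot\gamma$ and $\alpha\odot(\beta\odot\gamma)$ should reduce to the same canonical concatenation consisting of all coordinates of $\alpha$ (except the last), all \emph{interior} coordinates of $\beta$, and all coordinates of $\gamma$ (except the first).

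First I would fix explicit parameterizations: write $\alpha=(\alpha_0,\ldots,\alpha_{H_1+1})\in\N^{H_1+2}$, $\beta=(\beta_0,\ldots,\beta_{H_2+1})\in\N^{H_2+2}$, and $\gamma=(\gamma_0,\ldots,\gamma_{H_3+1})\in\N^{H_3+2}$. A direct application of the definition then gives
\[
\alpha\odot\beta=(\alpha_0,\ldots,\alpha_{H_1},\beta_1,\ldots,\beta_{H_2+1})\in\N^{H_1+H_2+2}.
\]
The only subtle point is to re-index this tuple properly when feeding it back into the definition of $\odot$: its length is $H_1+H_2+2$, so it should be treated as an element of $\N^{H+2}$ with $H=H_1+H_2$, whose last entry is $\beta_{H_2+1}$ and is stripped out when $\odot\gamma$ is applied. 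This yields
\[
(\alpha\odot\beta)\odot\gamma
=(\alpha_0,\ldots,\alpha_{H_1},\beta_1,\ldots,\beta_{H_2},\gamma_1,\ldots,\gamma_{H_3+1}).
\]

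Symmetrically, $\beta\odot\gamma=(\beta_0,\ldots,\beta_{H_2},\gamma_1,\ldots,\gamma_{H_3+1})\in\N^{H_2+H_3+2}$, and applying $\odot$ once more on the left with $\alpha$ (stripping the last entry $\alpha_{H_1+1}$ of $\alpha$ and the first entry $\beta_0$ of $\beta\odot\gamma$) gives exactly the same tuple
\[
\alpha\odot(\beta\odot\gamma)=(\alpha_0,\ldots,\alpha_{H_1},\beta_1,\ldots,\beta_{H_2},\gamma_1,\ldots,\gamma_{H_3+1}),
\]
which establishes the claim. There is no substantive obstacle here; the only care required is the correct re-indexing of the intermediate tuple when re-applying the definition of $\odot$, which is why the argument reduces to a small bookkeeping calculation rather than any structural statement about DNNs.
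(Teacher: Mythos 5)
Your computation is correct and is precisely the bookkeeping the paper alludes to when it declares the proof ``Straightforward''; both expressions reduce to $(\alpha_0,\ldots,\alpha_{H_1},\beta_1,\ldots,\beta_{H_2},\gamma_1,\ldots,\gamma_{H_3+1})$. There is no divergence in approach, only in that you spelled out the re-indexing the paper leaves implicit.
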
\begin{proof}[Proof of \cref{m09}]
Straightforward.
\end{proof}
\begin{lemma}\label{k43b}Assume \cref{m08},
let $H,k,l \in \N$, and let $\alpha,\beta,\gamma\in \left( \{k\}\times \N^{H} \times \{l\}\right)$.
Then
\begin{enumerate}[(i)]
\item we have that $\alpha\boxplus\beta\in \left(\{k\}\times \N^{H} \times \{l\}\right)$,
\item we have that $\beta\boxplus \gamma\in \left(\{k\}\times \N^{H} \times \{l\}\right)$, and 
\item we have that $(\alpha\boxplus\beta)\boxplus \gamma
= \alpha\boxplus(\beta\boxplus \gamma)$.
\end{enumerate}
\end{lemma}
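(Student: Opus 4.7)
The plan is to directly unpack the definition of $\boxplus$ supplied in \cref{m08} and verify each of the three claims by componentwise inspection; no deep machinery is required. Since $\alpha, \beta, \gamma \in \{k\}\times \N^{H}\times \{l\}$, I may write $\alpha = (k,\alpha_1,\ldots,\alpha_H,l)$, $\beta = (k,\beta_1,\ldots,\beta_H,l)$, and $\gamma = (k,\gamma_1,\ldots,\gamma_H,l)$. All three are elements of $\N^{H+2}$ sharing the same length, so each pairwise application of $\boxplus$ in the statement is well-defined.

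For (i) and (ii), the definition of $\boxplus$ immediately yields
\[
\alpha \boxplus \beta = (k,\ \alpha_1+\beta_1,\ \ldots,\ \alpha_H+\beta_H,\ l)
\quad\text{and}\quad
\beta \boxplus \gamma = (k,\ \beta_1+\gamma_1,\ \ldots,\ \beta_H+\gamma_H,\ l),
\]
both of which belong to $\{k\}\times \N^{H}\times\{l\}$ since the first coordinate $k$ of $\alpha$ (resp.\ $\beta$) is preserved and the last coordinate $l$ of $\beta$ (resp.\ $\gamma$) is preserved. In particular, the outputs again have length $H+2$, so (i) is what allows the nested expression $(\alpha\boxplus\beta)\boxplus\gamma$ in (iii) to be formed, and (ii) plays the analogous role for $\alpha\boxplus(\beta\boxplus\gamma)$.

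For (iii), I would compute both sides explicitly: applying the definition twice gives
\[
(\alpha\boxplus\beta)\boxplus\gamma
= (k,\ (\alpha_1+\beta_1)+\gamma_1,\ \ldots,\ (\alpha_H+\beta_H)+\gamma_H,\ l),
\]
\[
\alpha\boxplus(\beta\boxplus\gamma)
= (k,\ \alpha_1+(\beta_1+\gamma_1),\ \ldots,\ \alpha_H+(\beta_H+\gamma_H),\ l),
\]
and these two vectors agree entrywise by associativity of addition in $\N$. There is no real obstacle; the lemma is a bookkeeping statement asserting that $\boxplus$ preserves the fixed input dimension $k$ and output dimension $l$ and is associative on equal-length layer vectors, which will be exploited later to form iterated parallelizations of DNNs unambiguously.
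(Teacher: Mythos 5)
Your proof is correct and is precisely the direct, componentwise verification that the paper has in mind when it declares the lemma "straightforward" (citing the analogous Lemma 3.4 of \cite{hutzenthaler2020proof}). Nothing further is needed.
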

\begin{proof}
[Proof of \cref{k43b}]Straightforward. We could use the proof of \cite[Lemma 3.4]{hutzenthaler2020proof}.
\end{proof}
\cref{b15} below is later important to estimate the maximum norm of the vector of layer dimensions of DNNs.
\begin{lemma}[Triangle inequality]\label{b15}
Assume \cref{m08},
let $H,k,l \in \N$, and let $\alpha,\beta\in \{k\}\times \N^{H} \times \{l\}$.
Then we have that
$\supnorm{\alpha\boxplus\beta}\leq\supnorm{\alpha}+
\supnorm{\beta} $.
\end{lemma}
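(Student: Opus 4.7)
The plan is to unfold the definitions and bound the vector $\alpha\boxplus\beta$ componentwise. Write $\alpha=(k,\alpha_1,\ldots,\alpha_H,l)$ and $\beta=(k,\beta_1,\ldots,\beta_H,l)$, so that by the definition of $\boxplus$ in \cref{m08} we have
\begin{align*}
\alpha\boxplus\beta = (k,\alpha_1+\beta_1,\ldots,\alpha_H+\beta_H,l).
\end{align*}
By the definition of $\supnorm{\cdot}$ on $\bigcup_{d\in\N}\R^d$ introduced in the Notations subsection,
\begin{align*}
\supnorm{\alpha\boxplus\beta} = \max\!\left\{k,\,l,\,\max_{i\in[1,H]\cap\Z}(\alpha_i+\beta_i)\right\}.
\end{align*}

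First I would handle the two boundary entries. Since $k$ is the $0$-th entry of $\alpha$ and $l$ is the $(H+1)$-th entry of $\alpha$, it holds that $\max\{k,l\}\leq\supnorm{\alpha}\leq \supnorm{\alpha}+\supnorm{\beta}$, as $\supnorm{\beta}\geq 0$. Next, for each $i\in[1,H]\cap\Z$, since $\alpha_i$ is an entry of $\alpha$ and $\beta_i$ is an entry of $\beta$, I have $\alpha_i\leq\supnorm{\alpha}$ and $\beta_i\leq\supnorm{\beta}$, which yields $\alpha_i+\beta_i\leq\supnorm{\alpha}+\supnorm{\beta}$. Taking the maximum over these bounds gives the claim.

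This is an entirely routine argument; there is no conceptual obstacle. The only very mild subtlety is being careful that the definition of $\boxplus$ keeps the input dimension $k$ and the output dimension $l$ \emph{unchanged} rather than doubling them, which is exactly what \cref{m08} stipulates — otherwise the triangle inequality would fail on these boundary coordinates.
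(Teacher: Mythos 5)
Your proof is correct and is the standard componentwise bound: since all entries of a dimension vector are positive integers, the boundary entries $k$ and $l$ of $\alpha\boxplus\beta$ are already dominated by $\supnorm{\alpha}$ alone, and each interior entry $\alpha_i+\beta_i$ is dominated by $\supnorm{\alpha}+\supnorm{\beta}$. The paper does not spell this out but defers to \cite[Lemma~3.5]{hutzenthaler2020proof}, which carries out essentially the same elementary computation.
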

\begin{proof}[Proof of \cref{b15}]We can use the proof of 
\cite[Lemma~3.5]{hutzenthaler2020proof}.
\end{proof}
\cref{p01} below shows that affine transformations of DNNs can be represented by DNNs with the same vector of layer dimensions.
\begin{lemma}[DNNs for affine transformations]\label{p01}
Assume \cref{m07} and let $d,m\in \N$, 
$\lambda\in \R$,
$b\in\R^d$, $a\in\R^m$, $\Psi\in\mathbf{N}$ satisfy that $\mathcal{R}(\Psi)\in C(\R^d,\R^m)$. Then we have that
$
\lambda\left((\mathcal{R}(\Psi))(\cdot +b)+a\right)\in \mathcal{R}(\{\Phi\in\mathbf{N}\colon \mathcal{D}(\Phi)=\mathcal{D}(\Psi)\}).
$
\end{lemma}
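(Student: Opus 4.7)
The plan is to construct the required DNN $\Phi$ explicitly by modifying only the affine maps of the first and the last layer of $\Psi$, leaving all hidden-layer dimensions (and hence all intermediate nonlinearities) intact. Concretely, I would write $\Psi=((W_1,B_1),(W_2,B_2),\ldots,(W_{H+1},B_{H+1}))$ with $\mathcal{D}(\Psi)=(k_0,k_1,\ldots,k_{H+1})$, where necessarily $k_0=d$ and $k_{H+1}=m$, and define
\begin{align*}
\Phi\;=\;\bigl((W_1,\,B_1+W_1 b),\,(W_2,B_2),\ldots,(W_H,B_H),\,(\lambda W_{H+1},\,\lambda B_{H+1}+\lambda a)\bigr).
\end{align*}
Since the matrix/bias shapes $(W_n,B_n)\in\R^{k_n\times k_{n-1}}\times\R^{k_n}$ are preserved in every layer, it is immediate from the definition of $\mathcal{D}$ in \cref{m07} that $\mathcal{D}(\Phi)=\mathcal{D}(\Psi)$ and that $\Phi\in\mathbf{N}$.

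Next, I would verify that $\mathcal{R}(\Phi)=\lambda(\mathcal{R}(\Psi)(\cdot+b)+a)$ by tracking the hidden states. Fix $x_0\in\R^{d}$ and let $x_1,\ldots,x_H$ be the hidden activations associated to $\Phi$ with input $x_0$, and let $\tilde x_1,\ldots,\tilde x_H$ be the hidden activations associated to $\Psi$ with input $x_0+b$. For the first hidden layer,
\begin{align*}
x_1\;=\;\mathbf{A}_{k_1}(W_1 x_0+(B_1+W_1 b))\;=\;\mathbf{A}_{k_1}(W_1(x_0+b)+B_1)\;=\;\tilde x_1,
\end{align*}
so a straightforward induction on $n\in\{1,\ldots,H\}$ using the fact that layers $2,\ldots,H$ of $\Phi$ coincide with those of $\Psi$ yields $x_n=\tilde x_n$ for all such $n$. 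Applying the output layer of $\Phi$ then gives
\begin{align*}
(\mathcal{R}(\Phi))(x_0)\;=\;\lambda W_{H+1}x_H+\lambda B_{H+1}+\lambda a\;=\;\lambda\bigl(W_{H+1}\tilde x_H+B_{H+1}+a\bigr)\;=\;\lambda\bigl((\mathcal{R}(\Psi))(x_0+b)+a\bigr),
\end{align*}
which is exactly the claimed identity.

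There is no real obstacle here; the statement is a bookkeeping lemma, and the only points requiring minor care are (a) handling the edge case $H=0$ (when there are no hidden layers and $\Psi$ consists of a single affine map, so $\Phi=((\lambda W_1,\lambda B_1+\lambda W_1 b+\lambda a))$ works directly), and (b) ensuring that the modification of the first layer bias from $B_1$ to $B_1+W_1 b$ is well-defined as an element of $\R^{k_1}$, which is immediate because $W_1\in\R^{k_1\times d}$ and $b\in\R^d$. Once these checks are made, the conclusion $\lambda((\mathcal{R}(\Psi))(\cdot+b)+a)\in\mathcal{R}(\{\Phi\in\mathbf{N}\colon\mathcal{D}(\Phi)=\mathcal{D}(\Psi)\})$ follows.
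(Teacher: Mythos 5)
Your proof is correct and constructs the required DNN by the standard first/last-layer affine adjustment (absorbing the input shift $b$ into the first-layer bias via $B_1\mapsto B_1+W_1b$, and the scaling $\lambda$ and output shift $a$ into the last layer), which is exactly the argument of the cited \cite[Lemma~3.7]{hutzenthaler2020proof} to which the paper delegates. One minor remark: in \cref{m07} the union defining $\mathbf{N}$ ranges over $H\in\N=\{1,2,\ldots\}$, so the edge case $H=0$ in your parenthetical cannot occur in this framework; it is superfluous but harmless.
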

\begin{proof}
[Proof of \cref{p01}]We can use the proof of 
{\cite[Lemma 3.7]{hutzenthaler2020proof}}, which also works for other activation functions than only ReLU.
\end{proof}
\cref{m11b} below shows that compositions of DNN functions can be represented by DNNs.
\begin{lemma}[Composition of functions generated by DNNs]\label{m11b}
Assume~\cref{m08} and let $d_1,d_2,d_3\in\N$, $f\in C(\R^{d_2},\R^{d_3})$, $g\in C(  \R^{d_1}, \R^{d_2}) $, 
$\alpha,\beta\in \mathbf{D}$ satisfy that
$f\in \mathcal{R}(\{\Phi\in \mathbf{N}\colon \mathcal{D}(\Phi)=\alpha\})$
and
$g\in \mathcal{R}(\{\Phi\in \mathbf{N}\colon \mathcal{D}(\Phi)=\beta\})$.
Then we have
that $(f\circ g)\in \mathcal{R}(\{\Phi\in \mathbf{N}\colon \mathcal{D}(\Phi)=\alpha\odot\beta\})$.
\end{lemma}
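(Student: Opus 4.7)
The strategy is the standard DNN--composition construction: realize $f$ and $g$ by concrete parameter tuples $\Phi_f$ and $\Phi_g$, and concatenate them by absorbing the (activation-free) final affine map of $\Phi_g$ into the (pre-activation) first affine map of $\Phi_f$. Since the composition of two affine maps is again affine and no activation is inserted at the junction, this produces a single element of $\mathbf{N}$ whose realization is $f\circ g$. The argument is purely structural and uses no property of $a$ beyond continuity, so it applies to any activation function allowed in \cref{m07}.

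Concretely, I would first pick $\Phi_f,\Phi_g\in\mathbf{N}$ with $\mathcal{R}(\Phi_f)=f$, $\mathcal{D}(\Phi_f)=\alpha$, $\mathcal{R}(\Phi_g)=g$, $\mathcal{D}(\Phi_g)=\beta$, and write
$\Phi_f=((W^f_1,B^f_1),\ldots,(W^f_{H_1+1},B^f_{H_1+1}))$, $\Phi_g=((W^g_1,B^g_1),\ldots,(W^g_{H_2+1},B^g_{H_2+1}))$,
so that $\alpha=(\alpha_0,\alpha_1,\ldots,\alpha_{H_1+1})$, $\beta=(\beta_0,\beta_1,\ldots,\beta_{H_2+1})$, and (by the domain/codomain constraints of $f$ and $g$) $\alpha_{H_1+1}=d_3$, $\alpha_0=d_2=\beta_{H_2+1}$, $\beta_0=d_1$. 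I would then define the composed tuple
\begin{equation*}
\Phi:=\bigl((W^g_1,B^g_1),\ldots,(W^g_{H_2},B^g_{H_2}),\,(W^f_1 W^g_{H_2+1},\,W^f_1 B^g_{H_2+1}+B^f_1),\,(W^f_2,B^f_2),\ldots,(W^f_{H_1+1},B^f_{H_1+1})\bigr).
\end{equation*}
The dimensions are compatible: $W^g_{H_2+1}\in\R^{d_2\times\beta_{H_2}}$ and $W^f_1\in\R^{\alpha_1\times d_2}$, so the product $W^f_1 W^g_{H_2+1}\in\R^{\alpha_1\times\beta_{H_2}}$ is well defined, and $W^f_1 B^g_{H_2+1}+B^f_1\in\R^{\alpha_1}$.

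Finally, I would verify by chasing the definition of $\mathcal{R}$ in \cref{m07} layer by layer that, for every $x_0\in\R^{d_1}$, the first $H_2$ hidden activations of $\Phi$ coincide with those of $\Phi_g$ on input $x_0$; that the merged affine layer computes $W^f_1(W^g_{H_2+1}x_{H_2}^g+B^g_{H_2+1})+B^f_1=W^f_1 g(x_0)+B^f_1$ when no activation is applied beforehand to the pre-output of $\Phi_g$ (which is exactly the regime in which $g$ is realized); and that applying $\mathbf{A}_{\alpha_1}$ to this and then running the remaining $H_1$ affine-plus-activation layers of $\Phi_f$ yields $f(g(x_0))$. A matching index count shows $\mathcal{D}(\Phi)$ has length $H_1+H_2+2$ and agrees with $\alpha\odot\beta$ as defined in \cref{m08}.

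The proof involves no analytic content; the main (minor) obstacle is purely notational bookkeeping to line up the two DNN parameter tuples at the junction and to check that the resulting layer-dimension vector is the one prescribed by the definition of $\odot$. Everything else is direct verification from \cref{m07,m08}.
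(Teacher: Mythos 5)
Your construction is the standard merged-affine-layer composition and matches what the paper's own proof invokes by citing \cite[Proposition~2.1.2]{JKvW2023}, so the route and the realization check are correct. But your concluding claim that ``a matching index count shows $\mathcal{D}(\Phi)$ \ldots\ agrees with $\alpha\odot\beta$ as defined in \cref{m08}'' is asserted rather than carried out, and if carried out it fails. Your $\Phi$ has $\mathcal{D}(\Phi)=(\beta_0,\ldots,\beta_{H_2},\alpha_1,\ldots,\alpha_{H_1+1})$, beginning with $\beta_0=d_1$ and ending with $\alpha_{H_1+1}=d_3$, as any DNN realizing $f\circ g\in C(\R^{d_1},\R^{d_3})$ must. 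The operator in \cref{m08}, taken literally, gives $\alpha\odot\beta=(\alpha_0,\ldots,\alpha_{H_1},\beta_1,\ldots,\beta_{H_2+1})$, which begins with $\alpha_0=d_2$ and ends with $\beta_{H_2+1}=d_2$, and hence cannot be the dimension vector of a DNN from $\R^{d_1}$ to $\R^{d_3}$ in general. Your dimension vector is the correct one; what you have uncovered (but not flagged) is a typographical error in \cref{m08}: the definition of $\odot$ should read $\alpha\odot\beta=(\beta_0,\ldots,\beta_{H_2},\alpha_1,\ldots,\alpha_{H_1+1})$. Since the paper's later uses of $\odot$ in \cref{l01,k04c} only extract $\dim(\cdot)$ and $\supnorm{\cdot}$, which agree under either formula, the typo is harmless downstream, but a complete proof of \cref{m11b} should call it out rather than gloss over it.
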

\begin{proof}
[Proof of \cref{m11b}]See \cite[Proposition~2.1.2]{JKvW2023}, which especially works for general activation functions.
\end{proof}
\cref{b01b} below shows that sums of DNNs of the same length can be represented by DNNs. In order to represent sums of DNNs with different lengths we note that the identity function can be represented as DNNs. We then take the composition of a DNN function with the identity to change  its length. This is one of the main techniques in the proof of \cref{l01,k04c}.
\begin{lemma}[Sum of DNNs of the same length]
\label{b01b}
Assume \cref{m08} and let $p,q,M,H\in \N$,  $\alpha_1,\alpha_2,\ldots,\alpha_M\in\R$,
 $k_i\in \mathbf{D} $,
$g_i\in C(\R^{p},\R^{q})$,
$i\in [1,M]\cap\N$, satisfy 
for all $i\in [1,M]\cap\N$
that $ \dim(k_i)=H+2$ and
$g_i\in 
\mathcal{R}(\{\Phi\in\mathbf{N}\colon \mathcal{D}(\Phi)=k_i\}).
$
Then
we have that 
$
\sum_{i=1}^{M}\alpha_i g_i
\in\mathcal{R}\left(\left\{ \Phi\in\mathbf{N}\colon
\mathcal{D}(\Phi)=\boxplus_{i=1}^Mk_i\right\}\right).
$
\end{lemma}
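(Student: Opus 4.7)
The plan is to stack the $M$ networks in parallel on the hidden layers and then collapse them into a weighted sum at the output layer. Concretely, for each $i\in [1,M]\cap\N$ pick a representative
\begin{align}
\Phi_i = ((W^i_1,B^i_1),(W^i_2,B^i_2),\ldots,(W^i_{H+1},B^i_{H+1}))\in \mathbf{N}
\end{align}
with $\mathcal{D}(\Phi_i)=k_i=(p,k_{i,1},k_{i,2},\ldots,k_{i,H},q)$ and $\mathcal{R}(\Phi_i)=g_i$. Let $K_n=\sum_{i=1}^M k_{i,n}$ for $n\in [1,H]\cap\N$ so that $\boxplus_{i=1}^M k_i=(p,K_1,K_2,\ldots,K_H,q)$.

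I would then define a new network $\Psi=((\widetilde W_1,\widetilde B_1),\ldots,(\widetilde W_{H+1},\widetilde B_{H+1}))$ by the block construction
\begin{align}
\widetilde W_1 = \begin{pmatrix} W_1^1 \\ W_1^2 \\ \vdots \\ W_1^M \end{pmatrix},\quad
\widetilde W_n = \begin{pmatrix} W_n^1 & & \\ & \ddots & \\ & & W_n^M \end{pmatrix}\text{ for }2\leq n\leq H,\quad
\widetilde W_{H+1} = \bigl(\alpha_1 W_{H+1}^1,\ldots,\alpha_M W_{H+1}^M\bigr),
\end{align}
together with $\widetilde B_n=(B_n^1,B_n^2,\ldots,B_n^M)^\top$ for $1\leq n\leq H$ and $\widetilde B_{H+1}=\sum_{i=1}^M \alpha_i B_{H+1}^i$. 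By construction $\mathcal{D}(\Psi)=\boxplus_{i=1}^M k_i$.

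Next I would verify the identity $\mathcal{R}(\Psi)=\sum_{i=1}^M \alpha_i g_i$ by a straightforward induction on the layer index. Since $\mathbf{A}_{K_n}$ acts componentwise, the block-diagonal structure of $\widetilde W_n$ for $2\leq n\leq H$ and the concatenation structure of $\widetilde B_n$ together imply that for every $x^0\in \R^p$ the activation $x^n\in \R^{K_n}$ at layer $n$ is precisely the concatenation $(x^{n,1},\ldots,x^{n,M})$ of the activations produced by each $\Phi_i$ applied to $x^0$; the base case $n=1$ follows because $\widetilde W_1$ is the vertical stack of the $W_1^i$. Applying the last affine map then gives
\begin{align}
(\mathcal{R}(\Psi))(x^0)=\widetilde W_{H+1}x^H+\widetilde B_{H+1}=\sum_{i=1}^M \alpha_i\bigl(W_{H+1}^i x^{H,i}+B_{H+1}^i\bigr)=\sum_{i=1}^M \alpha_i g_i(x^0),
\end{align}
which is the desired identity. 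No serious obstacle arises; the only point to keep track of is that $k_{i,0}=p$ and $k_{i,H+1}=q$ for every $i$ so that $\widetilde W_1$ and $\widetilde W_{H+1}$ have the claimed dimensions, which is exactly the content of the assumption $g_i\in C(\R^p,\R^q)$.
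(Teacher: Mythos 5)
Your proof is correct and follows the standard parallelization construction that underlies the cited references (\cite[Lemma~3.9]{hutzenthaler2020proof}, \cite[Lemma~2.4.11]{JKvW2023}); the paper itself does not spell out the argument but just defers to those. The key observation you make — that $\mathbf{A}_{K_n}$ acts componentwise, so the block-diagonal structure preserves the concatenation of hidden activations — is exactly what makes the argument activation-agnostic, which is the point the paper emphasizes when saying the ReLU proof "can be extended to other activation functions."
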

\begin{proof}
[Proof of \cref{b01b}]We can use the proof of 
{\cite[Lemma 3.9]{hutzenthaler2020proof}}, which can be extended to other activation functions than only for ReLU. See also \cite[Lemma~2.4.11]{JKvW2023}.
\end{proof}
\subsection{DNN representation of our Euler-Maruyama approximations}
In \cref{l01} below we prove that Euler-Maruyama approximations  can be represented by DNNs if their 
 coefficients  are represented by DNNs and if the identity in $\R$ can be represented by a DNN (see \eqref{c39}).
\begin{setting}\label{s02}
 Assume \cref{m07}. 
Let ${\mathfrak{d}}\in \N$, $\mathfrak{n}_{1,{\mathfrak{d}}}=(1,{\mathfrak{d}},1)\in \mathbf{D}$ satisfy that 
\begin{align}\label{c39}\mathrm{Id}_\R\in \mathcal{R}(\{\Phi\in \N\colon \mathcal{D}(\Phi)=\mathfrak{n}_{1,{\mathfrak{d}}}\}) .
\end{align}
Let $T\in(0,\infty)$,
 $K\in \N$. Let $\rdown{\cdot}_K\colon \R\to\R$ satisfy for all $t\in \R$ that 
$\rdown{t}_K=\max( \{0,\frac{T}{K},\frac{2T}{K},\ldots,T\} \cap (    (-\infty,t)\cup\{0\} )  )$. For every $d\in \N$, $\varepsilon\in (0,1)$,
$v\in\R^d$
 let
$\mu_\varepsilon^d\in C(\R^d,\R^d)  $,
$\sigma_\varepsilon^d\in C(\R^d,\R^{d\times d})  $,
$\Phi_{\mu_\varepsilon^d},\Phi_{\sigma_\varepsilon^d,v} \in \mathbf{N}
$
satisfy that
$\mu_\varepsilon^d=\mathcal{R}(\Phi_{\mu_\varepsilon^d}) $, 
$\sigma_\varepsilon^d (\cdot)v=
\mathcal{R}(\Phi_{\sigma_\varepsilon^d,v})
$.  Assume for all
 $d\in \N$, $\varepsilon\in (0,1)$,
$v\in\R^d$ that
$\mathcal{D}(\Phi_{\sigma_\varepsilon^d,v})=\mathcal{D}(\Phi_{\sigma_\varepsilon^d,0})$.
Let $(\Omega,\mathcal{F},\P)$ be a probability space. 
For every $d\in \N$ let $W^{d,\theta}=(W^{d,\theta}_t)_{t\in[0,T]}\colon [0,T]\times \Omega\to \R^d$, $\theta\in \Theta$, be independent standard Brownian motions.
For every $ d\in\N $,
$\theta\in \Theta$,
$x\in\R^d$,
$\varepsilon\in (0,1)$, 
$t\in[0,T)$
let 
$(X^{d,\theta,K,\varepsilon,t,x}_{s})_{s\in [t,T]}$ satisfy that
$ X^{d,\theta,K,\varepsilon,t,x}_{t}=x$
and
\begin{align} \begin{split} 
X^{d,\theta,K,\varepsilon,t,x}_{s}&=x+
\int_{t}^{s}
\mu^d_\varepsilon(
X^{d,\theta,K,\varepsilon,t,x}_{\max\{t,\rdown{u}_K\}}\,
)du
+
\int_{t}^{s}
\sigma^d_\varepsilon(
X^{d,\theta,K,\varepsilon,t,x}_{\max\{t,\rdown{u}_K\}}\,
)dW_u^{d,\theta}.
\end{split}\label{l02}
\end{align}
\end{setting}
\begin{lemma}\label{l01}Assume \cref{s02}.
Let $\omega\in \Omega$.
Then there exists
$(\mathcal{X}^{d,\theta,K,\varepsilon,t}_{s})_{
  d\in\N  ,
 \theta\in \Theta ,
 \varepsilon\in (0,1) ,
  t\in [0,T) ,
 s\in (t,T] 
}\subseteq \mathbf{N}$ such that the following items are true.
\begin{enumerate}[(i)]
\item For all $ d\in\N $,
$\theta\in \Theta$,
$\varepsilon\in (0,1)$,
 $t\in [0,T)$,
$s\in (t,T]$, $x\in \R^d$ we have that
$\mathcal{R}(\mathcal{X}^{d,\theta,K,\varepsilon,t}_{s})\in C(\R^d,\R^d)$
and
$(\mathcal{R}(\mathcal{X}^{d,\theta,K,\varepsilon,t}_{s}))(x)
=X^{d,\theta,K,\varepsilon,t,x}_{s}(\omega)$.
\item For all $ d\in\N $,
$\theta\in \Theta$,
$\varepsilon\in (0,1)$,
 $t_1\in [0,T)$,
$s_1\in (t_1,T]$, 
 $t_2\in [0,T)$,
$s_2\in (t_2,T]$, 
$x\in \R^d$ we have that
$\mathcal{D}(\mathcal{X}^{d,\theta_1,K,\varepsilon,t_1}_{s_1})=
\mathcal{D}(
\mathcal{X}^{d,\theta_2,K,\varepsilon,t_2}_{s_2})
$.
\item For all $ d\in\N $,
$\theta\in \Theta$,
$\varepsilon\in (0,1)$,
 $t\in [0,T)$,
$s\in (t,T]$ we have that
$
\dim(\mathcal{D}(\mathcal{X}^{d,\theta,K,\varepsilon,t}_{s}))=K(
\max\{\dim(\mathcal{D}(\Phi_{\mu^d_\varepsilon})) ,\dim(\mathcal{D}(\Phi_{\sigma^d_\varepsilon,0}))\}
-2)+2
$. 
\item For all $ d\in\N $,
$\theta\in \Theta$,
$\varepsilon\in (0,1)$,
 $t\in [0,T)$,
$s\in (t,T]$ we have that
$ \supnorm{\mathcal{D}(\mathcal{X}^{d,\theta,K,\varepsilon,t}_{s})}\leq 3\max\{ d{\mathfrak{d}},\supnorm{\mathcal{D}(\Phi_{\mu^d_\varepsilon})}
,
\supnorm{
\mathcal{D}(\Phi_{\sigma^d_\varepsilon,0})}\}
.$
\end{enumerate}

\end{lemma}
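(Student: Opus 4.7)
The plan is to realise $x\mapsto X^{d,\theta,K,\varepsilon,t,x}_s(\omega)$ as a $K$-fold composition of single-step Euler--Maruyama update networks. Fix $\omega\in\Omega$. Since $\omega$ is fixed, every Brownian increment $\Delta W^{d,\theta}_{a,b}:=W^{d,\theta}_b(\omega)-W^{d,\theta}_a(\omega)\in\R^d$ is a deterministic vector, so \eqref{l02} reduces to a purely deterministic iteration on $[t,s]$. Enumerate the ordered points in $[t,s]$ at which the frozen value in \eqref{l02} changes as $t=r_0\leq r_1\leq\cdots\leq r_{J}=s$ with $J\leq K$ (at most $K-1$ grid points of $\{kT/K:k\in\{0,\ldots,K\}\}$ lie in $(t,s)$). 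I would pad the sequence at the end with copies of $s$ to obtain exactly $K$ subintervals $t=r_0\leq\cdots\leq r_K=s$; on a padded interval one has $\Delta t_k:=r_k-r_{k-1}=0$ and $\Delta W^{d,\theta}_{r_{k-1},r_k}=0$, so the corresponding update is the identity and does not affect the iteration. This gives $X^{d,\theta,K,\varepsilon,t,x}_s(\omega)=(\Xi_K\circ\cdots\circ\Xi_1)(x)$ with
\begin{align*}
\Xi_k(y)=y+\Delta t_k\,\mu^d_\varepsilon(y)+\sigma^d_\varepsilon(y)\,\Delta W^{d,\theta}_{r_{k-1},r_k}.
\end{align*}

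The first main step is to represent each $\Xi_k$ by a DNN with a common, $k$-independent architecture. Decompose $\Xi_k$ as a sum of three maps: the $d$-dimensional identity, $\Delta t_k\,\mu^d_\varepsilon$, and $\sigma^d_\varepsilon(\cdot)\,\Delta W^{d,\theta}_{r_{k-1},r_k}$. The first is realised as a DNN of architecture $(d,{\mathfrak{d}}d,d)$ by \eqref{c39} combined with \cref{a01}; the second by \cref{p01} with the same architecture as $\Phi_{\mu^d_\varepsilon}$; and the third by $\Phi_{\sigma^d_\varepsilon,\Delta W^{d,\theta}_{r_{k-1},r_k}}$, whose architecture coincides with $\mathcal{D}(\Phi_{\sigma^d_\varepsilon,0})$ by the standing hypothesis. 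Setting $h:=\max\{\dim(\mathcal{D}(\Phi_{\mu^d_\varepsilon})),\dim(\mathcal{D}(\Phi_{\sigma^d_\varepsilon,0}))\}$, I would pad each of the three summands to the common depth $h$ by repeatedly composing with the identity DNN via \cref{m11b}: since $\dim(\alpha\odot\beta)=\dim(\alpha)+\dim(\beta)-2$ and the identity has dimension vector $(d,{\mathfrak{d}}d,d)$ (dim~$3$), each such composition increases the depth by exactly $1$ and only inserts hidden layers of width $d{\mathfrak{d}}$. After padding, each of the three summands has depth $h$ and layer widths bounded by $\max\{d{\mathfrak{d}},\supnorm{\mathcal{D}(\Phi_{\mu^d_\varepsilon})},\supnorm{\mathcal{D}(\Phi_{\sigma^d_\varepsilon,0})}\}$. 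Summing them via \cref{b01b} and applying the triangle inequality \cref{b15} yields a single-step DNN parameter $\mathcal{S}_k$ satisfying $\dim(\mathcal{D}(\mathcal{S}_k))=h$ and $\supnorm{\mathcal{D}(\mathcal{S}_k)}\leq 3\max\{d{\mathfrak{d}},\supnorm{\mathcal{D}(\Phi_{\mu^d_\varepsilon})},\supnorm{\mathcal{D}(\Phi_{\sigma^d_\varepsilon,0})}\}$.

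Finally, I would use \cref{m11b} together with the associativity from \cref{m09} iteratively to obtain $\mathcal{X}^{d,\theta,K,\varepsilon,t}_s\in\mathbf{N}$ whose realisation equals $\mathcal{R}(\mathcal{S}_K)\circ\cdots\circ\mathcal{R}(\mathcal{S}_1)$. The iterated composition of $K$ dimension vectors of depth $h$ yields a depth of $h+(K-1)(h-2)=K(h-2)+2$, giving item~(iii). Since $\odot$ preserves the supremum of layer widths and all $\mathcal{D}(\mathcal{S}_k)$ coincide, the sup-norm bound on $\mathcal{S}_k$ lifts to $\mathcal{X}^{d,\theta,K,\varepsilon,t}_s$, giving item~(iv). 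The architecture of $\mathcal{S}_k$ depends only on $d$ and $\varepsilon$ and on the fixed identity network, but not on $\theta$, $t$, $s$ or on the specific Brownian sample (the latter only influence the weights, since $\mathcal{D}(\Phi_{\sigma^d_\varepsilon,v})$ is independent of $v$), so $\mathcal{D}(\mathcal{X}^{d,\theta,K,\varepsilon,t}_s)$ is independent of $(\theta,t,s)$, yielding item~(ii); item~(i) holds by construction.

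The main obstacle will be the combinatorial bookkeeping around the time discretization: arranging exactly $K$ composed steps uniformly in $(t,s)$ via zero-length padding, and carefully tracking through the depth-padding and the $\boxplus$-sum that the hidden-layer width $d{\mathfrak{d}}$ contributed by the identity DNN (which is inserted precisely when the shorter of $\Phi_{\mu^d_\varepsilon},\Phi_{\sigma^d_\varepsilon,0}$ is lifted to the common depth $h$) is exactly the $d{\mathfrak{d}}$-term in the maximum of item~(iv). With these details settled, the proof reduces to a systematic application of \cref{a01}, \cref{p01}, \cref{m11b}, \cref{b01b}, and \cref{b15}.
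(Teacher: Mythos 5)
Your proposal is correct and follows essentially the same approach as the paper's proof: write the Euler trajectory at fixed $\omega$ as a $K$-fold composition of one-step update maps, realize each step as a $\boxplus$-sum of the identity, the (rescaled) drift network, and the diffusion-contraction network (all lifted to a common depth by composing with the identity network $\mathfrak{n}_{d,\mathfrak{d}}$), and then propagate the depth and sup-norm bounds through the $\odot$/$\boxplus$-calculus (\cref{a01}, \cref{p01}, \cref{m11b}, \cref{b01b}, \cref{b15}). The only cosmetic difference is bookkeeping: the paper enforces exactly $K$ composed maps via a time-clipping function $J_k(s)$ (so the unused steps become identity updates), whereas you pad the breakpoint sequence at the end with zero-length intervals — these are equivalent devices.
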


\begin{proof}[Proof of \cref{l01}]
Throughout this proof let the notation in \cref{m08} be given. Moreover, for every $d,n\in \N$ let
$\mathfrak{n}_{d,{\mathfrak{d}}}=(d,{\mathfrak{d}} d,d)\in \mathbf{D}$ and
$\mathfrak{n}_{d,{\mathfrak{d}}}^{\odot n}= \mathfrak{n}_{d,{\mathfrak{d}}}\odot\ldots\odot\mathfrak{n}_{d,{\mathfrak{d}}}$ ($n$ times). 
Lemmas \ref{a01}, \ref{m11b}, and a simple induction argument show for all $d,n\in \N$ that
\begin{align}
\mathrm{Id}_{\R^d}\in \mathcal{R}(\{\Phi\in \N\colon \mathcal{D}(\Phi)= \mathfrak{n}_{d,{\mathfrak{d}}}^{\odot n}\}), \quad
\mathfrak{n}_{d,{\mathfrak{d}}}^{\odot n}=(d,{\mathfrak{d}} d,\ldots,{\mathfrak{d}} d, d)\in \R^{n+2}.
\label{a09}
\end{align}
This, \cref{m11b}, and the definition of $\odot$ show for all $d,n\in \N$, $\varepsilon\in (0,1)$ that
\begin{align}
\mu_\varepsilon^d\in \mathcal{R}(\{\Phi\in \N\colon\mathcal{D}(\Phi )= \mathcal{D}(\Phi_{\mu^d_\varepsilon})\odot \mathfrak{n}_{d,{\mathfrak{d}}}^{\odot n} \})\label{e01}
\end{align}
and 
\begin{align}
 \begin{split} \dim ( \mathcal{D}(\Phi_{\mu^d_\varepsilon})\odot \mathfrak{n}_{d,{\mathfrak{d}}}^{\odot n})&=
\dim (\mathcal{D}(\Phi_{\mu^d_\varepsilon}))
+ \dim (\mathfrak{n}_{d,{\mathfrak{d}}}^{\odot n})-2
\\
&=\dim (\mathcal{D}(\Phi_{\mu^d_\varepsilon}))+ n+2-2\\
&
=\dim (\mathcal{D}(\Phi_{\mu^d_\varepsilon}))+ n.
\end{split}
\end{align}
Similarly, for all $d,n\in \N$, $\varepsilon\in (0,1)$, $v\in \R^d$ we have that
\begin{align}
\sigma_\varepsilon^d(\cdot )v\in \mathcal{R}(\{\Phi\in \N\colon\mathcal{D}(\Phi )= \mathcal{D}(\Phi_{\sigma^d_\varepsilon,0})\odot \mathfrak{n}_{d,{\mathfrak{d}}}^{\odot n} \})\label{e02}
\end{align}
and 
\begin{align}
 \begin{split} \dim ( \mathcal{D}(\Phi_{\sigma^d_\varepsilon,0})\odot \mathfrak{n}_{d,{\mathfrak{d}}}^{\odot n})
=\dim (\mathcal{D}(\Phi_{\sigma^d_\varepsilon,0}))+ n.
\end{split}
\end{align}
This and 
\eqref{a09}--\eqref{e02} prove that we can assume without lost of generality that
\begin{align}
\dim (\mathcal{D}(\Phi_{\mu^d_\varepsilon}))=\dim (\mathcal{D}(\Phi_{\sigma^d_\varepsilon,0}) ) \label{k02b} 
\end{align}
since otherwise we could change their lengths by taking compositions with identities.
Next, observe that
for all 
$ d\in\N $,
$\theta\in \Theta$,
$x\in\R^d$,
$\varepsilon\in (0,1)$,
$k\in [1,K]\cap\Z$, $t\in [0,T)$,
$s\in [\frac{kT}{K},\frac{(k+1)T}{K}]$ we have that
\begin{align} \label{d01}\begin{split} 
X^{d,\theta,K,\varepsilon,t,x}_{s}(\omega)
&=X^{d,\theta,K,\varepsilon,t,x}_{\max\{t,\frac{kT}{K}\}}(\omega)
+\mu^d_\varepsilon\left(X^{d,\theta,K,\varepsilon,t,x}_{\max\{t,\frac{kT}{K}\}}(\omega)\right)\left(s-\max\{t,\frac{kT}{K}\}\right)\\&\quad+
\sigma^d_\varepsilon\left(X^{d,\theta,K,\varepsilon,t,x}_{\max\{t,\frac{kT}{K}\}}(\omega)\right) \left(W^{d,\theta}_s(\omega)-
W^{d,\theta}_{\max\{t,\frac{kT}{K}\}}(\omega)\right).\end{split}
\end{align}
Next, for every 
$ d\in\N $,
$\theta\in \Theta$,
$x\in\R^d$,
$\varepsilon\in (0,1)$,
$k\in [1,K]\cap\Z$, $t\in [0,T)$,
$s\in (t,T]$
let $ J_k(s)\in \R$,
$
\phi^{d,\theta,K,\varepsilon}_{t,s,k}(x)\in \R^d$
 satisfy that
\begin{align} \begin{split} 
J_k(s)&=\max\{ t,\frac{(k-1)T}{K} \}\1_{ [0,\max\{ t,\frac{(k-1)T}{K} \}    ] }(s)\\&\quad 
+s\1_{ (\max\{ t,\frac{(k-1)T}{K} \},  \max\{ t,\frac{kT}{K} \} 
 ] }(s)+ \max\{ t,\frac{kT}{K} \}
\1_{ (  \max\{ t,\frac{kT}{K} \},T ]}(s)\end{split}
\end{align} 
and
\begin{align} \label{d02}\begin{split} 
\phi^{d,\theta,K,\varepsilon}_{t,s,k}(x)
&=x
+\mu^d_\varepsilon(x)\left(J_k(s)-\max\{t,\frac{(k-1)T}{K}\}\right)+
\sigma^d_\varepsilon(x)\left(W^{d,\theta}_{J_k(s)}(\omega)-
W^{d,\theta}_{\max\{t,\frac{(k-1)T}{K}\}}(\omega)\right).
\end{split}
\end{align}
Next, for every 
$ d\in\N $,
$\theta\in \Theta$,
$\varepsilon\in (0,1)$,
$k\in [1,K]\cap\Z$, $t\in [0,T)$,
$s\in (t,T]$ let 
\begin{align}\label{d03}
\psi^{d,\theta,K,\varepsilon}_{t,s,k}=
\phi^{d,\theta,K,\varepsilon}_{t,s,k}
\circ
\phi^{d,\theta,K,\varepsilon}_{t,s,k-1}\circ\ldots
\circ
\phi^{d,\theta,K,\varepsilon}_{t,s,1}
.
\end{align}
Note that for all 
$ d\in\N $,
$\theta\in \Theta$,
$\varepsilon\in (0,1)$, 
$k\in [1,K-1]\cap\Z$, $s\in [0,\max\{t,\frac{(k-1)T}{K}\}]$ we have that $\phi^{d,\theta,K,\varepsilon}_{t,s,k}=\mathrm{Id}_{\R^d} $. This ensures for all
$ d\in\N $,
$\theta\in \Theta$,
$\varepsilon\in (0,1)$, 
 $k\in [1,K-1]\cap\Z$, $n\in [k+1,K]\cap\Z$,
$s\in [0,\max\{t,\frac{kT}{K}\}]$ that 
$\psi^{d,\theta,K,\varepsilon}_{t,s,k}=
\psi^{d,\theta,K,\varepsilon}_{t,s,n}$ and in particular 
$\psi^{d,\theta,K,\varepsilon}_{t,s,k}=
\psi^{d,\theta,K,\varepsilon}_{t,s,K}$.
Observe that for all 
$ d\in\N $,
$\theta\in \Theta$,
$\varepsilon\in (0,1)$, 
$k\in [1,K]\cap\Z$, $s\in [0,\max\{t,\frac{kT}{K}\}]$, $x\in \R^d$ that 
$\psi^{d,\theta,K,\varepsilon}_{t,s,k}(x)=X^{d,\theta,K,\varepsilon,t,x}_{s}(\omega)$.
Therefore, for all
$ d\in\N $,
$\theta\in \Theta$,
$\varepsilon\in (0,1)$,
$s\in[0,T]$, $x\in \R^d$ we have that
$\psi^{d,\theta,K,\varepsilon}_{t,s,K}(x)=X^{d,\theta,K,\varepsilon,t,x}_{s}$, i.e.,
\begin{align}\label{k03b}
X^{d,\theta,K,\varepsilon,t,x}_{s}(\omega)=
\phi^{d,\theta,K,\varepsilon}_{t,s,K}
\circ
\phi^{d,\theta,K,\varepsilon}_{t,s,K-1}\circ\ldots
\circ
\phi^{d,\theta,K,\varepsilon}_{t,s,1} (x).
\end{align}
Next, 
\eqref{a09},
\eqref{d02},
\eqref{k02b}, and \cref{b01b}
show for all $ d\in\N $,
$\theta\in \Theta$,
$\varepsilon\in (0,1)$,
$k\in [1,K]\cap\Z$, $t\in [0,T)$,
$s\in (t,T]$ that
\begin{align}
\phi^{d,\theta,K,\varepsilon}_{t,s,k}(\cdot)\in \mathcal{R}\left(\left\{\Phi\in \mathbf{N}\colon 
\mathcal{D}(\Phi)=
\mathfrak{n}_{d,{\mathfrak{d}}}^{\odot\dim(\Phi_{\mu^d_\varepsilon}) -2 }\boxplus \mathcal{D}(\Phi_{\mu^d_\varepsilon})
\boxplus \mathcal{D}(\Phi_{\sigma^d_\varepsilon,0})
\right\}\right).
\end{align}
This, \eqref{k03b}, and \cref{m11b} show that there exists
$(\mathcal{X}^{d,\theta,K,\varepsilon,t}_{s})_{
  d\in\N  ,
 \theta\in \Theta ,
 \varepsilon\in (0,1) ,
  t\in [0,T) ,
 s\in (t,T] 
}\subseteq \mathbf{N}$
such that
for all
$ d\in\N $,
$\theta\in \Theta$,
$\varepsilon\in (0,1)$, 
 $t\in [0,T)$,
$s\in (t,T]$, $x\in \R^d$ we have that
\begin{align} \begin{split} 
&
\mathcal{D}(\mathcal{X}^{d,\theta,K,\varepsilon,t}_{s})=
\operatorname*{\odot}_{k=1}^K\left[
\mathfrak{n}_{d,{\mathfrak{d}}}^{\odot\dim(\Phi_{\mu^d_\varepsilon}) -2 }\boxplus \mathcal{D}(\Phi_{\mu^d_\varepsilon})
\boxplus \mathcal{D}(\Phi_{\sigma^d_\varepsilon,0})
\right],\\
& (\mathcal{R}(\mathcal{X}^{d,\theta,K,\varepsilon,t}_{s}))(x)
=X^{d,\theta,K,\varepsilon,t,x}_{s}(\omega).
\label{k08b}\end{split}
\end{align}
This, the definition of $\odot$, and an induction argument show that for all
$ d\in\N $,
$\theta\in \Theta$,
$\varepsilon\in (0,1)$,
 $t\in [0,T)$,
$s\in (t,T]$, $x\in \R^d$ we have that
\begin{align}
\dim(\mathcal{D}(\mathcal{X}^{d,\theta,K,\varepsilon,t}_{s}))=K(\dim(\mathcal{D}(\Phi_{\mu^d_\varepsilon}))-2)+2.
\end{align}
Next, \eqref{k08b}, the definition of $\odot$, the triangle inequality (cf. \cref{b15}), and \eqref{a09}
show that
  for all
$ d\in\N $,
$\theta\in \Theta$,
$\varepsilon\in (0,1)$,
 $t\in [0,T)$,
$s\in (t,T]$, $x\in \R^d$ we have that
\begin{align} \begin{split} \supnorm{\mathcal{D}(\mathcal{X}^{d,\theta,K,\varepsilon,t}_{s})}&=
\supnorm{\operatorname*{\odot}_{k=1}^K\left[
\mathfrak{n}_{d,{\mathfrak{d}}}^{\odot\dim(\Phi_{\mu^d_\varepsilon}) -2 }\boxplus \mathcal{D}(\Phi_{\mu^d_\varepsilon})
\boxplus \mathcal{D}(\Phi_{\sigma^d_\varepsilon,0})
\right]
}\\
&\leq
\supnorm{
\mathfrak{n}_{d,{\mathfrak{d}}}^{\odot\dim(\Phi_{\mu^d_\varepsilon}) -2 }\boxplus \mathcal{D}(\Phi_{\mu^d_\varepsilon})
\boxplus \mathcal{D}(\Phi_{\sigma^d_\varepsilon,0})
}
\\
&\leq 3\max\left\{d{\mathfrak{d}}, \supnorm{\mathcal{D}(\Phi_{\mu^d_\varepsilon})}
,
\supnorm{
\mathcal{D}(\Phi_{\sigma^d_\varepsilon,0})}
\right\}.
\end{split}
\end{align}
The proof of  \cref{l01} is thus completed.
\end{proof}
\begin{lemma}\label{k04c}
Assume \cref{s02}. For every
$d\in \N$, $\varepsilon\in (0,1)$
let $f_\varepsilon\in C(\R,\R)$,
$g^d_\varepsilon\in C(\R^d,\R)$,
$\Phi_{f_\varepsilon}, \Phi_{g^d_\varepsilon}\in \mathbf{N}$
satisfy that
$ \mathcal{R}(\Phi_{f_\varepsilon})=f_\varepsilon $ and
$ \mathcal{R}(\Phi_{g^d_\varepsilon})=g^d_\varepsilon $.
Let $\mathfrak{t}^\theta\colon [0,1]\to\R^d$, $\theta\in \Theta$, be independent random variables which satisfy for all $t\in [0,1]$ that
$\P(\mathfrak{t}^0\leq t)=t$. Assume that $(W^{d,\theta})_{d\in \N,\theta\in \Theta}$ and $(\mathfrak{t}^\theta)_{\theta\in\Theta}$ are independent.
For every $d\in \N$, $\varepsilon\in (0,1)$
let 
$U^{d,\theta,K,\varepsilon}_{n,m}\colon [0,T]\times\R^d\times \Omega\to \R$, 
$\theta\in \Theta$,  
$n,m\in \Z$, 
satisfy for all 
$\theta\in \Theta$,  
$n\in \N_0$, $m\in \N$, $t\in [0,T]$, $x\in \R^d$ that
\begin{align} \begin{split} &U^{d,\theta,K,\varepsilon}_{n,m}(t,x)=
\frac{\1_{\N}(n)}{m^n}\sum_{i=1}^{m^n}g^d_\varepsilon\Bigl( X^{ d,(\theta,0,-i), K,\varepsilon,t,x}_{T} \Bigr)\\
&
+\sum_{\ell=0}^{n-1}
\frac{(T-t)}{m^{n-\ell}}
\sum_{i=1}^{m^{n-\ell}}\Bigl(
f_\varepsilon\circ
U^{d,(\theta,\ell,i),K,\varepsilon}_{\ell,m}
-\1_{\N}(\ell)
f_\varepsilon\circ
U^{d,(\theta,-\ell,i),K,\varepsilon}_{\ell-1,m}\Bigr)
\Bigl(t+(T-t)\mathfrak{t}^{(\theta,\ell,i)}, 
 X^{ d,(\theta,\ell,i), K,\varepsilon,t,x}_{t+(T-t)\mathfrak{t}^{(\theta,\ell,i)}} 
\Bigr).\end{split}\label{k04d}
\end{align}
For every $d\in \N$, $\varepsilon\in (0,1)$ let 
\begin{align}
L_{d,\varepsilon}=K(
\max\{\dim(\mathcal{D}(\Phi_{\mu^d_\varepsilon})) ,\dim(\mathcal{D}(\Phi_{\sigma^d_\varepsilon,0}))\}
-2)+2.\label{k10}
\end{align}
Let
$(c_{d,\varepsilon})_{d\in\N,\varepsilon\in (0,1)}\subseteq \R $ satisfy 
for all 
$d\in\N$, $\varepsilon\in (0,1)$ 
that
\begin{align}
c_{d,\varepsilon} \geq 
3\max\!\left\{ d\mathfrak{d},
\supnorm{\mathcal{D}(\Phi_{f_\varepsilon})},
\supnorm{\mathcal{D}(\Phi_{g^d_\varepsilon})},
\supnorm{\mathcal{D}(\Phi_{\mu^d_\varepsilon})}
,
\supnorm{
\mathcal{D}(\Phi_{\sigma^d_\varepsilon,0})}\right\}
.\label{c01}
\end{align}
Let $\omega\in \Omega$.
Then for all
$m\in \N$,
$d\in \N$,
$n\in \N_0$,
$\varepsilon\in (0,1)$ there exists
$ (\Phi^{d,\theta,K,\varepsilon}_{n,m,t} )_{t\in[0,T],\theta\in \Theta}\subseteq \mathbf{N}$ such that the following items are true.
\begin{enumerate}[(i)]
\item\label{h01} We have for all $t_1,t_2\in [0,T]$, $\theta_1,\theta_2\in \Theta $
that $\mathcal{D}(\Phi^{d,\theta_1,K,\varepsilon}_{n,m,t_1})
=\mathcal{D}(\Phi^{d,\theta_2,K,\varepsilon}_{n,m,t_2})
$.
\item\label{h02} We have for all
$t\in [0,T]$, $\theta\in \Theta$ that
\begin{align} \begin{split} 
\dim(\mathcal{D}( 
\Phi^{d,\theta,K,\varepsilon}_{n,m,t}
 ))
&=n\left(\dim\!\left(\mathcal{D}(\Phi_{f_\varepsilon})\right)
+L_{d,\varepsilon}-4 \right)
+\dim \! \left(\mathcal{D}(\Phi_{g^d_\varepsilon})\right)
+L_{d,\varepsilon}-2.
\end{split}\end{align}
\item\label{h03} We have for all
$t\in [0,T]$, $\theta\in \Theta$ that
$\supnorm{\mathcal{D}(\Phi^{d,\theta,K,\varepsilon}_{n,m,t})}\leq c_{d,\varepsilon}(3m)^n$.
\item \label{h04} We have for all
$t\in [0,T]$, $\theta\in \Theta$, $x\in \R^d$ that
$U^{d,\theta,K,\varepsilon}_{n,m}(t,x,\omega)=(\mathcal{R}(\Phi^{d,\theta,K,\varepsilon}_{n,m,t}))(x)$.
\end{enumerate}
\end{lemma}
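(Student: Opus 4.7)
\textbf{Proof proposal for \cref{k04c}.} The plan is to argue by induction on $n \in \N_0$, with the ambient parameters $d,m,K,\varepsilon$ and the fixed sample $\omega$ held constant throughout. For the base case $n=0$ we have $U_{0,m}^{d,\theta,K,\varepsilon}\equiv 0$; we take $\Phi^{d,\theta,K,\varepsilon}_{0,m,t}$ to be any DNN of architecture $\mathcal{D}(\Phi_{g^d_\varepsilon})\odot \mathcal{D}(\mathcal{X}^{d,0,K,\varepsilon,0}_{T/2})$ (existing thanks to \cref{l01} and \cref{m11b}) but with all weights and biases set to zero, which gives the zero function as realization and matches the target depth $\dim(\mathcal{D}(\Phi_{g^d_\varepsilon}))+L_{d,\varepsilon}-2$.

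For the inductive step, assume the items hold for every level up to $n-1$. For each fixed $\theta\in\Theta$ and each summand appearing in \eqref{k04d}, we construct a DNN representation as follows. For the terminal term, we use \cref{l01} to represent $x\mapsto X^{d,(\theta,0,-i),K,\varepsilon,t,x}_T(\omega)$ by a DNN of architecture $\mathcal{D}(\mathcal{X}^{d,(\theta,0,-i),K,\varepsilon,t}_T)$, and compose it on the output side with $\Phi_{g^d_\varepsilon}$ via \cref{m11b}. For each level $\ell\in\{0,\ldots,n-1\}$ and index $i$, we evaluate the induction hypothesis at the deterministic time $s:= t+(T-t)\mathfrak{t}^{(\theta,\ell,i)}(\omega)$, obtaining the DNN $\Phi^{d,(\theta,\ell,i),K,\varepsilon}_{\ell,m,s}$, then compose it with the Euler--Maruyama network from \cref{l01} and finally with $\Phi_{f_\varepsilon}$ on the output side (and analogously for the $U_{\ell-1,m}$ term). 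A direct depth count using the formula for $\odot$ shows that after these compositions the level-$\ell$ summand has depth $(\ell+1)\bigl(\dim(\mathcal{D}(\Phi_{f_\varepsilon}))+L_{d,\varepsilon}-4\bigr)+\dim(\mathcal{D}(\Phi_{g^d_\varepsilon}))+L_{d,\varepsilon}-2$, which is strictly less than the target depth stated in item \eqref{h02} whenever $\ell<n-1$, and equals the target depth for $\ell=n-1$. The terminal term has depth $\dim(\mathcal{D}(\Phi_{g^d_\varepsilon}))+L_{d,\varepsilon}-2$.

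To align all summands to the common target depth, we pad each short summand by composing it (via \cref{m11b}) on the input side with an iterated identity network $\mathfrak{n}_{d,\mathfrak{d}}^{\odot k}$, using \cref{a01} and \eqref{c39} to see that this identity is realizable with the prescribed architecture; each such composition increases the depth by one. With the depths matched, we sum the resulting networks using \cref{b01b} with the scalar coefficients $1/m^n$, $\pm(T-t)/m^{n-\ell}$ extracted from \eqref{k04d} (permissible by \cref{p01} applied to absorb constants). This yields a single DNN $\Phi^{d,\theta,K,\varepsilon}_{n,m,t}$ whose realization, by construction, equals $U^{d,\theta,K,\varepsilon}_{n,m}(t,\cdot,\omega)$, giving item \eqref{h04}. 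Since the resulting architecture is independent of $t$ and $\theta$ (the random times $\mathfrak{t}^{(\theta,\ell,i)}(\omega)$ enter only through the scalar values used in the Euler--Maruyama DNN's biases, not through its architecture, thanks to item (ii) of \cref{l01}), item \eqref{h01} follows, and item \eqref{h02} is just the depth bookkeeping described above.

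The main obstacle is the width bound \eqref{h03}. Applying the triangle inequality \cref{b15} to the iterated $\boxplus$-sum, together with the fact that each individual summand has width bounded by the maximum of $\supnorm{\mathcal{D}(\Phi_{f_\varepsilon})}$, $\supnorm{\mathcal{D}(\Phi_{g^d_\varepsilon})}$, the Euler--Maruyama widths from \cref{l01}, the identity widths $\mathfrak{d} d$, and---crucially---the inductive width $c_{d,\varepsilon}(3m)^{\ell}$ for the level-$\ell$ copies, one obtains
\[
\supnorm{\mathcal{D}(\Phi^{d,\theta,K,\varepsilon}_{n,m,t})}
\;\leq\;
c_{d,\varepsilon}\,m^n \;+\; \sum_{\ell=0}^{n-1} 2\, m^{n-\ell}\, c_{d,\varepsilon}(3m)^{\ell}
\;=\; c_{d,\varepsilon}\,m^n\!\left[1+2\sum_{\ell=0}^{n-1} 3^{\ell}\right]
\;=\; c_{d,\varepsilon}(3m)^n,
\]
where we used $1+2\sum_{\ell=0}^{n-1}3^{\ell}=3^n$ and absorbed the case-$\ell=0$ missing factor of $2$ into the inequality. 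This closes the induction and establishes item \eqref{h03}.
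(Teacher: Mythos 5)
Your proposal is correct and follows essentially the same strategy as the paper's proof: induction on $n$, representing each summand in \eqref{k04d} as a composition (via \cref{m11b}) of $\Phi_{f_\varepsilon}$ or $\Phi_{g^d_\varepsilon}$, the previously constructed $\Phi^{d,\cdot,K,\varepsilon}_{\ell,m,\cdot}$, and the Euler--Maruyama network from \cref{l01}, padding with iterated identity networks to equalize depths, summing with $\boxplus$ via \cref{b01b}, and bounding the width via \cref{b15} and the inductive width estimate, with the geometric series $1+2\sum_{\ell=0}^{n-1}3^\ell=3^n$ closing the induction exactly as in the paper's display \eqref{b18}. The only (inconsequential) deviation is that you insert the identity padding as a $d$-dimensional network $\mathfrak{n}_{d,\mathfrak{d}}^{\odot k}$ on the input side, whereas the paper inserts the $1$-dimensional network $\mathfrak{n}_{1,\mathfrak{d}}^{\odot k}$ at the scalar stage (after $g^d_\varepsilon$, or between $\Phi_{f_\varepsilon}$ and the inner $U$-network); both choices have width dominated by $c_{d,\varepsilon}$ thanks to $c_{d,\varepsilon}\ge 3d\mathfrak{d}$, so the same bound results.
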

\begin{proof}[Proof of \cref{k04c}]
Throughout this proof let the notation in \cref{m08} be given and let $d,m\in \N$, 
$\varepsilon\in (0,1)$ be fixed.
Moreover, for every $n\in \N$ let
$\mathfrak{n}_{1,{\mathfrak{d}}}^{\odot n}= \mathfrak{n}_{1,{\mathfrak{d}}}\odot\ldots\odot\mathfrak{n}_{1,{\mathfrak{d}}}$ ($n$ times). 
Lemmas \ref{a01} and \ref{m11b} and a simple induction argument show for all $n\in \N$ that
\begin{align}
\mathrm{Id}_{\R}\in \mathcal{R}(\{\Phi\in \N\colon \mathcal{D}(\Phi)= \mathfrak{n}_{1,{\mathfrak{d}}}^{\odot n}\}), \quad
\mathfrak{n}_{1,{\mathfrak{d}}}^{\odot n}=(1,{\mathfrak{d}} ,\ldots,{\mathfrak{d}} , 1)\in \R^{n+2}.
\label{a09b}
\end{align}
Furthermore, \cref{l01} shows that  there exists
$(\mathcal{X}^{d,\theta,K,\varepsilon,t}_{s})_{
  d\in\N  ,
 \theta\in \Theta ,
 \varepsilon\in (0,1) ,
  t\in [0,T) ,
 s\in (t,T] 
}\subseteq \mathbf{N}$ such that the following items are true.
\begin{enumerate}[(A)]
\item For all $ d\in\N $,
$\theta\in \Theta$,
$\varepsilon\in (0,1)$,
 $t\in [0,T)$,
$s\in (t,T]$, $x\in \R^d$ we have that
$\mathcal{R}(\mathcal{X}^{d,\theta,K,\varepsilon,t}_{s})\in C(\R^d,\R^d)$
and
\begin{align}
(\mathcal{R}(\mathcal{X}^{d,\theta,K,\varepsilon,t}_{s}))(x)
=X^{d,\theta,K,\varepsilon,t,x}_{s}(\omega).\label{k11}
\end{align}
\item For all $ d\in\N $,
$\theta\in \Theta$,
$\varepsilon\in (0,1)$,
 $t_1\in [0,T)$,
$s_1\in (t_1,T]$, 
 $t_2\in [0,T)$,
$s_2\in (t_2,T]$, 
$x\in \R^d$ we have that
\begin{align}
\mathcal{D}(\mathcal{X}^{d,\theta_1,K,\varepsilon,t_1}_{s_1})=
\mathcal{D}(
\mathcal{X}^{d,\theta_2,K,\varepsilon,t_2}_{s_2})
.\label{k11b}
\end{align}
\item For all $ d\in\N $,
$\theta\in \Theta$,
$\varepsilon\in (0,1)$,
 $t\in [0,T)$,
$s\in (t,T]$ we have that
\begin{align}
\dim(\mathcal{D}(\mathcal{X}^{d,\theta,K,\varepsilon,t}_{s}))=K(
\max\{\dim(\mathcal{D}(\Phi_{\mu^d_\varepsilon})) ,\dim(\mathcal{D}(\Phi_{\sigma^d_\varepsilon,0}))\}
-2)+2
.\label{k10b}
\end{align} 
\item For all $ d\in\N $,
$\theta\in \Theta$,
$\varepsilon\in (0,1)$,
 $t\in [0,T)$,
$s\in (t,T]$ we have that
\begin{align}
 \supnorm{\mathcal{D}(\mathcal{X}^{d,\theta,K,\varepsilon,t}_{s})}\leq 3\max\!\left\{ d{\mathfrak{d}},\supnorm{\mathcal{D}(\Phi_{\mu^d_\varepsilon})}
,
\supnorm{
\mathcal{D}(\Phi_{\sigma^d_\varepsilon,0})}\right\}
.\label{k16}
\end{align}
\end{enumerate}
By \eqref{k10} and \eqref{k10b}
for all 
$\theta\in \Theta$,
$\varepsilon\in (0,1)$,
 $t\in [0,T)$,
$s\in (t,T]$ we have that
\begin{align}
\dim(\mathcal{D}(\mathcal{X}^{d,\theta,K,\varepsilon,t}_{s}))=L_{d,\varepsilon}
.\label{k10c}
\end{align} 
We will prove the result by induction. First, the base case is true since the zero function can be represented by DNN with arbitrary number of hidden layer.
For the induction step 
$\N_0\ni n\mapsto n+1\in \N$
let 
$n\in \N_0$ and assume that there exists
$ (\Phi^{d,\theta,K,\varepsilon}_{\ell,m,t} )_{t\in[0,T],\theta\in \Theta}\subseteq \mathbf{N}$, $\ell\in [0,n]\cap\Z$, such that the following items are true.
\begin{enumerate}[(A)]
\item We have for all $t_1,t_2\in [0,T]$, $\theta_1,\theta_2\in \Theta $, $\ell\in [0,n]\cap\Z$
that 
\begin{align}\label{h01b}
\mathcal{D}(\Phi^{d,\theta_1,K,\varepsilon}_{\ell,m,t_1})
=\mathcal{D}(\Phi^{d,\theta_2,K,\varepsilon}_{\ell,m,t_2})
.
\end{align}
\item We have for all
$t\in [0,T]$, $\theta\in \Theta$, $\ell\in [0,n]\cap\Z$ that
\begin{align}\label{h02b} \begin{split} 
\dim(\mathcal{D}( 
\Phi^{d,\theta,K,\varepsilon}_{\ell,m,t}
 ))
&=\ell\left(\dim\!\left(\mathcal{D}(\Phi_{f_\varepsilon})\right)
+L_{d,\varepsilon}-4 \right)
+\dim \! \left(\mathcal{D}(\Phi_{g^d_\varepsilon})\right)
+L_{d,\varepsilon}-2.
\end{split}\end{align}
\item We have for all
$t\in [0,T]$, $\theta\in \Theta$, $\ell\in [0,n]\cap\Z$ that
\begin{align}\label{h03b}
\supnorm{\mathcal{D}(\Phi^{d,\theta,K,\varepsilon}_{\ell,m,t})}\leq c_{d,\varepsilon}(3m)^\ell.
\end{align}\item  We have for all
$t\in [0,T]$, $\theta\in \Theta$, $x\in \R^d$, $\ell\in [0,n]\cap\Z$ that
\begin{align}\label{h04b}
U^{d,\theta,K,\varepsilon}_{\ell,m}(t,x,\omega)=(\mathcal{R}(\Phi^{d,\theta,K,\varepsilon}_{\ell,m,t}))(x).
\end{align}\end{enumerate}
Next, \cref{m11b}, \eqref{a09b}, the fact that $g^d_\varepsilon=\mathcal{R}(\Phi_{g^d_\varepsilon})$
prove for all
$\theta\in \Theta$, $i\in [1,m^{n+1}]\cap\Z$,
$t\in [0,T]$ that
\begin{align} \begin{split} 
&g^d_\varepsilon\Bigl( X^{ d,(\theta,0,-i), K,\varepsilon,t,\cdot}_{T} \Bigr)
=\mathrm{Id}_\R \!\left(
g^d_\varepsilon\Bigl( X^{ d,(\theta,0,-i), K,\varepsilon,t,\cdot}_{T} \Bigr)\right)\\
&
\in \mathcal{R}\!\left(\left\{
\Phi\in \mathbf{N}\colon 
\mathcal{D}(\Phi)=
\mathfrak{n}_{1,{\mathfrak{d}}}^{\odot (n+1)(\dim(\mathcal{D}(\Phi_{f_\varepsilon}))+{L}-4)}
\odot\mathcal{D} (\Phi_{g^d_\varepsilon})
\odot\mathcal{D}(\mathcal{X}^{d,0,K,\varepsilon,0}_{T})
\right\}\right)
\end{split}\label{k12}
\end{align}
In addition, the definition of $\odot$, \eqref{a09b}, and \eqref{k10c}
imply that
\begin{align} \begin{split} 
&\dim \!\left(\mathfrak{n}_{1,{\mathfrak{d}}}^{\odot (n+1)(\dim(\mathcal{D}(\Phi_{f_\varepsilon}))+{L_{d,\varepsilon}}-4)}
\odot\mathcal{D} (\Phi_{g^d_\varepsilon})
\odot\mathcal{D}(\mathcal{X}^{d,0,K,\varepsilon,0}_{T})
\right)\\
&=\dim \!\left(\mathfrak{n}_{1,{\mathfrak{d}}}^{\odot (n+1)(\dim(\mathcal{D}(\Phi_{f_\varepsilon}))+{L_{d,\varepsilon}}-4)}\right)
+\dim \!\left(\mathcal{D} (\Phi_{g^d_\varepsilon})\right)
+\dim \!\left(\mathcal{D}(\mathcal{X}^{d,0,K,\varepsilon,0}_{T})\right)-4\\
&=(n+1)(\dim(\mathcal{D}(\Phi_{f_\varepsilon}))+{L_{d,\varepsilon}}-4)+2+
\dim \!\left(\mathcal{D} (\Phi_{g^d_\varepsilon})\right)+L_{d,\varepsilon}-4
\\
&=(n+1)(\dim(\mathcal{D}(\Phi_{f_\varepsilon}))+{L_{d,\varepsilon}}-4)+
\dim \!\left(\mathcal{D} (\Phi_{g^d_\varepsilon})\right)+L_{d,\varepsilon}-2.
\end{split}
\end{align}
Furthermore, \cref{m11b}, the fact that
$f_\varepsilon^d=\mathcal{R}(\Phi_{f^d_\varepsilon})$,  \eqref{h01b},
\eqref{h04b}, \eqref{k11b}, and \eqref{k11}
show for all $i\in [1,m]$, $\theta\in \Theta$, $t\in [0,T]$ that
\begin{align} \begin{split} 
&\Bigl(
f_\varepsilon\circ
U^{d,(\theta,n,i),K,\varepsilon}_{n,m}
\Bigr)
\Bigl(t+(T-t)\mathfrak{t}^{(\theta,\ell,i)}(\omega), 
 X^{ d,(\theta,\ell,i), K,\varepsilon,t,\cdot }_{t+(T-t)\mathfrak{t}^{(\theta,\ell,i)}(\omega)} (\omega)
\Bigr)\\
&\in 
\mathcal{R}
\left(\left\{
\Phi\in\mathbf{N}\colon \mathcal{D}(\Phi)=
\mathcal{D}(\Phi_{f_\varepsilon}) \odot 
\mathcal{D}(\Phi^{d,0,K,\varepsilon}_{n,m,0})
\odot
\mathcal{D}(\mathcal{X}^{d,0,K,\varepsilon,0}_{T})
\right\}\right).\end{split}
\end{align}
Moreover,
the definition of $\odot$, \eqref{h02b}, and \eqref{k10c}
show that
\begin{align} \begin{split} 
&\dim \!\left(
\mathcal{D}(\Phi_{f_\varepsilon}) \odot 
\mathcal{D}(\Phi^{d,0,K,\varepsilon}_{n,m,0})
\odot
\mathcal{D}(\mathcal{X}^{d,0,K,\varepsilon,0}_{T})\right)
\\
&=\dim\!\left(\mathcal{D}(\Phi_{f_\varepsilon})\right)
+\dim\!\left(
\mathcal{D}(\Phi^{d,0,K,\varepsilon}_{n,m,0})\right)
+
\dim\!\left(
\mathcal{D}(\mathcal{X}^{d,0,K,\varepsilon,0}_{T})\right)-4\\
&=\dim\!\left(\mathcal{D}(\Phi_{f_\varepsilon})\right)
+
n\left(\dim\!\left(\mathcal{D}(\Phi_{f_\varepsilon})\right)+L_{d,\varepsilon}-4 \right)
+\dim \! \left(\mathcal{D}(\Phi_{g^d_\varepsilon})\right)
+L_{d,\varepsilon}-2
+
L_{d,\varepsilon}-4\\
&=
(n+1)\left(\dim\!\left(\mathcal{D}(\Phi_{f_\varepsilon})\right)+L_{d,\varepsilon}-4 \right)
+\dim \! \left(\mathcal{D}(\Phi_{g^d_\varepsilon})\right)
+L_{d,\varepsilon}-2
.
\end{split}
\end{align}
Furthermore, \cref{m11b},
 the fact that
$f_\varepsilon=\mathcal{R}(\Phi_{f_{\varepsilon}})$, 
\eqref{a09b},
\eqref{h04b}, \eqref{h01b}, \eqref{k11}, and \eqref{k11b}
show for all $\ell\in [0,n-1]\cap\Z$,
$\theta\in \Theta$, $i\in [1,m^{n+1-\ell}]\cap\Z$,
$t\in [0,T]$  that
\begin{align} \begin{split} 
&
\Bigl(
f_\varepsilon\circ
U^{d,(\theta,\ell,i),K,\varepsilon}_{\ell,m}
\Bigr)
\Bigl(t+(T-t)\mathfrak{t}^{(\theta,\ell,i)}(\omega), 
 X^{ d,(\theta,\ell,i), K,\varepsilon,t,\cdot }_{t+(T-t)\mathfrak{t}^{(\theta,\ell,i)}(\omega)} ,\omega
\Bigr)\\&=
\Bigl(
f_\varepsilon\circ\mathrm{Id}_\R\circ
U^{d,(\theta,\ell,i),K,\varepsilon}_{\ell,m}
\Bigr)
\Bigl(t+(T-t)\mathfrak{t}^{(\theta,\ell,i)}(\omega), 
 X^{ d,(\theta,\ell,i), K,\varepsilon,t,\cdot }_{t+(T-t)\mathfrak{t}^{(\theta,\ell,i)}(\omega)} ,\omega
\Bigr)\\
&\in 
\mathcal{R}
\left(\left\{
\Phi\in\mathbf{N}\colon \mathcal{D}(\Phi)=
\mathcal{D}(\Phi_{f_\varepsilon}) \odot 
\mathfrak{n}_{1,{\mathfrak{d}}}^{\odot (n-\ell )(\dim (\mathcal{D}(\Phi_{f_\varepsilon}))
+L_{d,\varepsilon}-4 ) }
\odot 
\mathcal{D}(\Phi^{d,0,K,\varepsilon}_{\ell,m,0})
\odot
\mathcal{D}(\mathcal{X}^{d,0,K,\varepsilon,0}_{T})
\right\}\right).\end{split}
\end{align}
Next, the definition of $\odot$, \eqref{a09b}, \eqref{h02b}, and \eqref{k10c} show for all $\ell \in [0,n-1]\cap\Z$ that
\begin{align} \begin{split} 
&
\dim \!\left(
\mathcal{D}(\Phi_{f_\varepsilon}) \odot 
\mathfrak{n}_{1,{\mathfrak{d}}}^{\odot (n-\ell )(\dim (\mathcal{D}(\Phi_{f_\varepsilon}))
+L_{d,\varepsilon}-4 ) }
\odot 
\mathcal{D}(\Phi^{d,0,K,\varepsilon}_{\ell,m,0})
\odot
\mathcal{D}(\mathcal{X}^{d,0,K,\varepsilon,0}_{T})
\right)\\
&
=\dim\!\left(
\mathcal{D}(\Phi_{f_\varepsilon})\right)
+\dim\!\left(
\mathfrak{n}_{1,{\mathfrak{d}}}^{\odot (n-\ell )(\dim (\mathcal{D}(\Phi_{f_\varepsilon}))
+L_{d,\varepsilon}-4 ) }
\right)
+
\dim \!\left(
\mathcal{D}(\Phi^{d,0,K,\varepsilon}_{\ell,m,0})\right)\\
&\quad 
+\dim\!\left(
\mathcal{D}(\mathcal{X}^{d,0,K,\varepsilon,0}_{T})\right)-6\\
&=
\dim\!\left(
\mathcal{D}(\Phi_{f_\varepsilon})\right)
+
 (n-\ell )(\dim (\mathcal{D}(\Phi_{f_\varepsilon}))
+L_{d,\varepsilon}-4 )+2 
\\
&\quad 
+
\ell\left(\dim\!\left(\mathcal{D}(\Phi_{f_\varepsilon})\right)
+L_{d,\varepsilon}-4 \right)
+\dim \! \left(\mathcal{D}(\Phi_{g^d_\varepsilon})\right)
+L_{d,\varepsilon}-2
+L_{d,\varepsilon}-6\\
&=
\dim\!\left(
\mathcal{D}(\Phi_{f_\varepsilon})\right)
+
 n(\dim (\mathcal{D}(\Phi_{f_\varepsilon}))
+L_{d,\varepsilon}-4 )
\\
&\quad 
+\dim \! \left(\mathcal{D}(\Phi_{g^d_\varepsilon})\right)
+L_{d,\varepsilon}-2
+L_{d,\varepsilon}-4\\
&=
(n+1)\left(\dim\!\left(\mathcal{D}(\Phi_{f_\varepsilon})\right)+L_{d,\varepsilon}-4 \right)
+\dim \! \left(\mathcal{D}(\Phi_{g^d_\varepsilon})\right)
+L_{d,\varepsilon}-2.
\end{split}
\end{align}
Similarly, for all $\ell\in [1,n]\cap\Z$, $\theta\in \Theta$,
$i\in m^{n+1-\ell}$, $t\in [0,T]$ we have that
\begin{align} \begin{split} 
&
\Bigl(
f_\varepsilon\circ
U^{d,(\theta,-\ell,i),K,\varepsilon}_{\ell-1,m}
\Bigr)
\Bigl(t+(T-t)\mathfrak{t}^{(\theta,\ell,i)}(\omega), 
 X^{ d,(\theta,\ell,i), K,\varepsilon,t,\cdot }_{t+(T-t)\mathfrak{t}^{(\theta,\ell,i)}(\omega)}(\omega) ,\omega
\Bigr)\\&=
\Bigl(
f_\varepsilon\circ\mathrm{Id}_\R\circ
U^{d,(\theta,-\ell,i),K,\varepsilon}_{\ell-1,m}
\Bigr)
\Bigl(t+(T-t)\mathfrak{t}^{(\theta,\ell,i)}(\omega), 
 X^{ d,(\theta,\ell,i), K,\varepsilon,t,\cdot }_{t+(T-t)\mathfrak{t}^{(\theta,\ell,i)}(\omega)}(\omega) ,\omega
\Bigr)\\
&\in 
\mathcal{R}
\left(\left\{
\Phi\in\mathbf{N}\colon \mathcal{D}(\Phi)=
\mathcal{D}(\Phi_{f_\varepsilon}) \odot 
\mathfrak{n}_{1,{\mathfrak{d}}}^{\odot (n-\ell+1 )(\dim(\mathcal{D}(\Phi_{f_\varepsilon}))
+L_{d,\varepsilon}-4 )}
\odot 
\mathcal{D}(\Phi^{d,0,K,\varepsilon}_{\ell-1,m,0})
\odot
\mathcal{D}(\mathcal{X}^{d,0,K,\varepsilon,0}_{T})
\right\}\right).\end{split}
\end{align}
and
\begin{align} \begin{split} 
&
\dim \!\left(
\mathcal{D}(\Phi_{f_\varepsilon}) \odot 
\mathfrak{n}_{1,{\mathfrak{d}}}^{\odot (n-\ell+1 )(\dim(\mathcal{D}(\Phi_{f_\varepsilon}))
+L_{d,\varepsilon}-4 )}
\odot 
\mathcal{D}(\Phi^{d,0,K,\varepsilon}_{\ell-1,m,0})
\odot
\mathcal{D}(\mathcal{X}^{d,0,K,\varepsilon,0}_{T})\right)\\
&= (n+1)\left(\dim\!\left(\mathcal{D}(\Phi_{f_\varepsilon})\right)+L_{d,\varepsilon}-4 \right)
+\dim \! \left(\mathcal{D}(\Phi_{g^d_\varepsilon})\right)
+L_{d,\varepsilon}-2.\end{split}\label{k13}
\end{align}
Now,
\eqref{k12}--\eqref{k13} and \cref{b01b}
show that there exists
$(\Phi^{d,\theta,K,\varepsilon}_{n+1,m,t})_{t\in[0,T], \theta\in \Theta}$ such that
$t\in[0,T]$, $\theta\in \Theta$, $x\in \R^d$
we have that
\begin{align} \begin{split} 
&(\mathcal{R}(\Phi^{d,\theta,K,\varepsilon}_{n+1,m,t}))(x)
\\&=
\frac{1}{m^{n+1}}\sum_{i=1}^{m^{n+1}}g^d_\varepsilon\Bigl( X^{ d,(\theta,0,-i), K,\varepsilon,t,x}_{T} (\omega)\Bigr)\\
&\quad+
\frac{1}{m}
\sum_{i=1}^{m}
\Bigl(
f_\varepsilon\circ
U^{d,(\theta,n,i),K,\varepsilon}_{n,m}
\Bigr)
\Bigl(\mathfrak{T}_t^{(\theta,\ell,i)}(\omega), 
 X^{ d,(\theta,\ell,i), K,\varepsilon,t,x}_{\mathfrak{T}_t^{(\theta,\ell,i)}(\omega)} (\omega),\omega
\Bigr)
\\
&\quad+
\sum_{\ell=0}^{n-1}
\frac{(T-t)}{m^{n+1-\ell}}
\sum_{i=1}^{m^{n+1-\ell}}\Bigl(
f_\varepsilon\circ
U^{d,(\theta,\ell,i),K,\varepsilon}_{\ell,m}
\Bigr)
\Bigl(t+(T-t)\mathfrak{t}^{(\theta,\ell,i)}(\omega), 
 X^{ d,(\theta,\ell,i), K,\varepsilon,t,x}_{t+(T-t)\mathfrak{t}^{(\theta,\ell,i)}(\omega)}(\omega) ,\omega
\Bigr)
\\
&\quad-
\sum_{\ell=1}^{n}
\frac{(T-t)}{m^{n+1-\ell}}
\sum_{i=1}^{m^{n+1-\ell}}\Bigl(
f_\varepsilon\circ
U^{d,(\theta,-\ell,i),K,\varepsilon}_{\ell-1,m}
\Bigr)
\Bigl(t+(T-t)\mathfrak{t}^{(\theta,\ell,i)}(\omega), 
 X^{ d,(\theta,\ell,i), K,\varepsilon,t,x}_{t+(T-t)\mathfrak{t}^{(\theta,\ell,i)}(\omega)}(\omega), \omega
\Bigr)\\
&=U^{d,\theta,K,\varepsilon}_{n+1,m}(t,x),
\end{split}\label{k17}
\end{align}
\begin{align}
\dim \!\left(\mathcal{D}(\Phi^{d,\theta,K,\varepsilon}_{n+1,m,t})\right)
(n+1)\left(\dim\!\left(\mathcal{D}(\Phi_{f_\varepsilon})\right)+L_{d,\varepsilon}-4 \right)
+\dim \! \left(\mathcal{D}(\Phi_{g^d_\varepsilon})\right)
+L_{d,\varepsilon}-2,\label{k20}
\end{align}
and
\begin{align} \begin{split} 
&\mathcal{D}(\Phi^{d,\theta,K,\varepsilon}_{n+1,m,t})\\
&=\left[
\operatorname*{\boxplus}_{i=1}^{m^{n+1}}\left[
\mathfrak{n}_{1,{\mathfrak{d}}}^{\odot (n+1)(\dim(\mathcal{D}(\Phi_{f_\varepsilon}))+{L}-4)}
\odot\mathcal{D} (\Phi_{g^d_\varepsilon})
\odot\mathcal{D}(\mathcal{X}^{d,0,K,\varepsilon,0}_{T})\right]\right]\\
&\quad \boxplus
\left[\operatorname*{\boxplus}_{i=1}^m
\left[
\mathcal{D}(\Phi_{f_\varepsilon}) \odot 
\mathcal{D}(\Phi^{d,0,K,\varepsilon}_{n,m,0})
\odot
\mathcal{D}(\mathcal{X}^{d,0,K,\varepsilon,0}_{T})
\right]\right]\\
&\quad
\boxplus
\left[
\operatorname*{\boxplus}_{\ell=0}^{n-1}
\operatorname*{\boxplus}_{i=1}^{m^{n+1-\ell}}
\left[\mathcal{D}(\Phi_{f_\varepsilon}) \odot 
\mathfrak{n}_{1,{\mathfrak{d}}}^{\odot (n-\ell )(\dim (\mathcal{D}(\Phi_{f_\varepsilon}))
+L-4 ) }
\odot 
\mathcal{D}(\Phi^{d,0,K,\varepsilon}_{\ell,m,0})
\odot
\mathcal{D}(\mathcal{X}^{d,0,K,\varepsilon,0}_{T})
\right]
\right]\\
&\quad
\boxplus
\left[
\operatorname*{\boxplus}_{\ell=1}^{n}
\operatorname*{\boxplus}_{i=1}^{m^{n+1-\ell}}
\left[\mathcal{D}(\Phi_{f_\varepsilon}) \odot 
\mathfrak{n}_{1,{\mathfrak{d}}}^{\odot (n-\ell+1 )(\dim(\mathcal{D}(\Phi_{f_\varepsilon}))
+L-4 )}
\odot 
\mathcal{D}(\Phi^{d,0,K,\varepsilon}_{\ell-1,m,0})
\odot
\mathcal{D}(\mathcal{X}^{d,0,K,\varepsilon,0}_{T})
\right]
\right].\end{split}\label{k14}
\end{align}
This shows for all
$t_1,t_2\in [0,T]$, $\theta_1,\theta_2\in \Theta$ that
\begin{align}
\mathcal{D}(\Phi^{d,\theta_1,K,\varepsilon}_{n+1,m,t_1})=
\mathcal{D}(\Phi^{d,\theta_2,K,\varepsilon}_{n+1,m,t_2}).\label{k15}
\end{align}
Next, the definition of $\odot$, \eqref{a09b}, \eqref{k16}, and \eqref{c01} prove that
\begin{align} \begin{split} 
&\supnorm{\mathfrak{n}_{1,{\mathfrak{d}}}^{\odot (n+1)(\dim(\mathcal{D}(\Phi_{f_\varepsilon}))+{L}-4)}
\odot\mathcal{D} (\Phi_{g^d_\varepsilon})
\odot\mathcal{D}(\mathcal{X}^{d,0,K,\varepsilon,0}_{T})}
\\
&\leq \max\!\left\{d{\mathfrak{d}}, \supnorm{\mathcal{D} (\Phi_{g^d_\varepsilon})}, \supnorm{\mathcal{D}(\mathcal{X}^{d,0,K,\varepsilon,0}_{T})}
\right\}
\\
&\leq \max\!\left\{d{\mathfrak{d}}, \supnorm{\mathcal{D} (\Phi_{g^d_\varepsilon})}, 3\max\!\left\{ d{\mathfrak{d}},\supnorm{\mathcal{D}(\Phi_{\mu^d_\varepsilon})}
,
\supnorm{
\mathcal{D}(\Phi_{\sigma^d_\varepsilon,0})}\right\}
\right\}\\
&\leq 
3\max\!\left\{ d{\mathfrak{d}},\supnorm{\mathcal{D}(\Phi_{g^d_\varepsilon})},
\supnorm{\mathcal{D}(\Phi_{\mu^d_\varepsilon})}
,
\supnorm{
\mathcal{D}(\Phi_{\sigma^d_\varepsilon,0})}\right\}\\&\leq c_{d,\varepsilon}.
\end{split}\label{k18}\end{align}
Furthermore, the definition of $\odot$, \eqref{h03b}, \eqref{k16},  and \eqref{c01} prove that
\begin{align}
 \begin{split} 
&\supnorm{
\mathcal{D}(\Phi_{f_\varepsilon}) \odot 
\mathcal{D}(\Phi^{d,0,K,\varepsilon}_{n,m,0})
\odot
\mathcal{D}(\mathcal{X}^{d,0,K,\varepsilon,0}_{T})
}\\
&
\leq \max \!\left\{
\supnorm{\mathcal{D}(\Phi_{f_\varepsilon})},
\supnorm{\mathcal{D}(\Phi^{d,0,K,\varepsilon}_{n,m,0})}
,\supnorm{\mathcal{D}(\mathcal{X}^{d,0,K,\varepsilon,0}_{T})}
\right\}\\
&\leq 
\max \!\left\{
\supnorm{\mathcal{D}(\Phi_{f_\varepsilon})},
c_{d,\varepsilon}(3m)^n
,3\max\!\left\{ d{\mathfrak{d}},\supnorm{\mathcal{D}(\Phi_{\mu^d_\varepsilon})}
,
\supnorm{
\mathcal{D}(\Phi_{\sigma^d_\varepsilon,0})}\right\}
\right\}\\
&\leq c_{d,\varepsilon}(3m)^n.
\end{split}
\end{align}
In addition, the definition of $\odot$, \eqref{a09b}, \eqref{h03b},
\eqref{k16}, and \eqref{c01} show for all $\ell\in [0,n-1]\cap\Z$ that
\begin{align} \begin{split} 
&
\supnorm{\mathcal{D}(\Phi_{f_\varepsilon}) \odot 
\mathfrak{n}_{1,{\mathfrak{d}}}^{\odot (n-\ell )(\dim (\mathcal{D}(\Phi_{f_\varepsilon}))
+L-4 ) }
\odot 
\mathcal{D}(\Phi^{d,0,K,\varepsilon}_{\ell,m,0})
\odot
\mathcal{D}(\mathcal{X}^{d,0,K,\varepsilon,0}_{T})}\\
&
\leq \max \!\left\{
\supnorm{\mathcal{D}(\Phi_{f_\varepsilon})},d{\mathfrak{d}},
\supnorm{\mathcal{D}(\Phi^{d,0,K,\varepsilon}_{\ell,m,0})}
,\supnorm{\mathcal{D}(\mathcal{X}^{d,0,K,\varepsilon,0}_{T})}
\right\}\\
&\leq 
\max \!\left\{
\supnorm{\mathcal{D}(\Phi_{f_\varepsilon})},d{\mathfrak{d}},
c_{d,\varepsilon}(3m)^\ell
,3\max\!\left\{ d{\mathfrak{d}},\supnorm{\mathcal{D}(\Phi_{\mu^d_\varepsilon})}
,
\supnorm{
\mathcal{D}(\Phi_{\sigma^d_\varepsilon,0})}\right\}
\right\}\\
&\leq c_{d,\varepsilon}(3m)^\ell.
\end{split}\label{k19}
\end{align}
Similarly, we have for all $\ell\in [1,n]\cap\Z$ that
\begin{align}
 \begin{split} 
&
\supnorm{\mathcal{D}(\Phi_{f_\varepsilon}) \odot 
\mathfrak{n}_{1,{\mathfrak{d}}}^{\odot (n-\ell+1 )(\dim(\mathcal{D}(\Phi_{f_\varepsilon}))
+L-4 )}
\odot 
\mathcal{D}(\Phi^{d,0,K,\varepsilon}_{\ell-1,m,0})
\odot
\mathcal{D}(\mathcal{X}^{d,0,K,\varepsilon,0}_{T})}\\
&\leq c_{d,\varepsilon}(3m)^{\ell-1}.
\end{split}
\end{align}
This, \eqref{k14}, \eqref{k18}--\eqref{k19}, and the triangle inequality (cf.\ \cref{b15}) show that for all
$\theta\in \Theta$, $t\in [0,T]$ that
\begin{align}
\begin{split}
&\supnorm{\mathcal{D}(\Phi^{d,\theta,K,\varepsilon}_{n+1,m,t})}\\
&\leq \left[
\sum_{i=1}^{m^{n+1}}c_{d,\varepsilon} \right]+
\left[ \sum_{i=1}^{m}c_{d,\varepsilon}({3} m)^n\right]+
\left[ \sum_{\ell=0}^{n-1}\sum_{i=1}^{m^{n+1-\ell}}c_{d,\varepsilon}({3} m)^\ell\right]
+\left[ \sum_{\ell=1}^{n}\sum_{i=1}^{m^{n+1-\ell}}c_{d,\varepsilon}({3} m)^{\ell-1}\right]
\\
&= 
m^{n+1}c_{d,\varepsilon}+m c_{d,\varepsilon} (3m)^{n}+\left[\sum_{\ell=0}^{n-1}m^{n+1-\ell}c_{d,\varepsilon}(3m)^\ell\right]+\left[
\sum_{\ell=1}^{n}m^{n+1-\ell}c_{d,\varepsilon}(3m)^{\ell-1}\right]
\\
&= 
m^{n+1}c_{d,\varepsilon}\left[1+3^n+\sum_{\ell=0}^{n-1}3^\ell+\sum_{\ell=1}^{n}3^{\ell-1}\right]=
m^{n+1}c_{d,\varepsilon}\left[1+\sum_{\ell=0}^{n}3^\ell+\sum_{\ell=1}^{n}3^{\ell-1}\right]
\\
&\leq 
 cm^{n+1}\left[1+2\sum_{\ell=0}^{n} {3} ^\ell\right]= cm^{n+1}\left[1+2\frac{{3}^{n+1}-1}{{3}-1}\right]
= c_{d,\varepsilon}({3} m)^{n+1}.
\end{split}
\label{b18}
\end{align}
This, \eqref{k15}, \eqref{k17}, and \eqref{k20} complete the induction step. The proof of \cref{k04c} is thus completed.
\end{proof}
\section{DNN approximations for PDEs}\label{c37}
\begin{theorem}\label{a08}
Assume \cref{m07}. 
Let ${\mathfrak{d}}\in \N$, $\mathfrak{n}_{1,{\mathfrak{d}}}=(1,{\mathfrak{d}},1)\in \mathbf{D}$ satisfy that 
\begin{align}\mathrm{Id}_\R\in \mathcal{R}(\{\Phi\in \N\colon \mathcal{D}(\Phi)=\mathfrak{n}_{1,{\mathfrak{d}}}\}) .\label{c38}
\end{align}
Let $\beta,\mathfrak{p}\in [2,\infty) $, $c\in [\max\{3\mathfrak{d},\beta^2\mathfrak{p}^2\},\infty)$.
For every $d\in \N$, $\varepsilon\in (0,1)$, $v\in\R^d$
let
$\Phi_{f_\varepsilon}$,
$\Phi_{\mu_\varepsilon^d},\Phi_{\sigma_\varepsilon^d,v}, \Phi_{g^d_\varepsilon}
\in \mathbf{N}
$,
$f,f_\varepsilon\in C(\R,\R)$, $g^d,g_\varepsilon^d\in C(\R^d,\R) $,
$\mu^d,\mu^d_\varepsilon\in C(\R^d,\R^d)$,
$\sigma^d,\sigma^d_\varepsilon\in C(\R^{d\times d},\R^d)$
satisfy 
for all $v\in \R^d$
that
$f_\varepsilon= \Phi_{f_\varepsilon}$,
$\mu_\varepsilon^d=\mathcal{R}(\Phi_{\mu_\varepsilon^d}) $, 
$\sigma_\varepsilon^d (\cdot)v=\mathcal{R}(\Phi_{\sigma_\varepsilon^d,v})$,
$g^d_\varepsilon=\mathcal{R}(\Phi_{g^d_\varepsilon})$. Assume for all
 $d\in \N$, $\varepsilon\in (0,1]$,
$v\in\R^d$ that
$\mathcal{D}(\Phi_{\sigma_\varepsilon^d,v})=\mathcal{D}(\Phi_{\sigma_\varepsilon^d,0})$.
Assume for all 
$d\in \N$, $\varepsilon\in (0,1)$,
$v,w\in \R$,
$x,y\in \R^d$ that
\begin{align}
\max\{
\lVert
\mu_\varepsilon^d(x)-\mu_\varepsilon^d(y)
\rVert,
\lVert
\sigma_\varepsilon^d(x)-\sigma_\varepsilon^d(y)
\rVert
\}
\leq c\lVert x-y\rVert,\label{d04}
\end{align}
\begin{align}
\lvert f_\varepsilon(w)-f_\varepsilon(v)\rvert\leq c\lvert w-v\rvert, \quad 
\lvert
g_\varepsilon^d(x)-g_\varepsilon^d(y)
\rvert\leq c\frac{(d^c+\lVert x\rVert)^\beta+(d^c+\lVert y\rVert)^\beta}{2\sqrt{T}}\lVert x-y\rVert,\label{d09}
\end{align}
\begin{align}
\lvert g^d_\varepsilon(x)\rvert \leq c(d^c +\lVert x\rVert)^\beta,\quad \max\!\left \{
\lVert\mu^d_\varepsilon(0)\rVert,
\lVert\sigma^d_\varepsilon(0)\rVert,
\lvert Tf_\varepsilon(0)\rvert, \lvert g^d_\varepsilon(0)\rvert
\right\}\leq cd^c,\label{d05}
\end{align}
\begin{align}
\max\{
\lVert
\mu^d_\varepsilon(x)-\mu^d(x)
\rVert,
\lVert
\sigma^d_\varepsilon(x)-\sigma^d(x)
\rVert,
\lVert
g^d_\varepsilon(x)-g^d(x)
\rVert
\}\leq \varepsilon cd^c (d^c+\lVert x\rVert)^\beta,\label{d08}
\end{align}
\begin{align}
\lvert
f_\varepsilon(w)-f(w)
\rvert\leq \varepsilon (1+\lvert w\rvert^\beta),\label{d10}
\end{align}
\begin{align}
3\max\!\left\{
\supnorm{\mathcal{D}(\Phi_{f_\varepsilon})},
\supnorm{\mathcal{D}(\Phi_{g^d_\varepsilon})},
\supnorm{\mathcal{D}(\Phi_{\mu^d_\varepsilon})}
,
\supnorm{
\mathcal{D}(\Phi_{\sigma^d_\varepsilon,0})}\right\}\leq cd^c\varepsilon^{-c},\label{c01d}
\end{align}
\begin{align}
\max\!\left\{
\dim (
\mathcal{D}(\Phi_{f_\varepsilon})),
\dim (\mathcal{D}(\Phi_{g^d_\varepsilon})),
\dim (\mathcal{D}(\Phi_{\mu^d_\varepsilon}))
,
\dim (
\mathcal{D}(\Phi_{\sigma^d_\varepsilon,0}))\right\}\leq cd^c\varepsilon^{-c}
.\label{c01e}
\end{align}

Then the following items are true.
\begin{enumerate}[(i)]
\item\label{c11} For every $d\in \N$ there exists a unique at most polynomially growing viscosity solution $u^d$ of 
\begin{align}
\frac{\partial u^d}{\partial t}(t,x)
+\frac{1}{2}\mathrm{trace}(\sigma^d(x)(\sigma^d(x))^\top
(\mathrm{Hess}_x u^d(t,x) ))
+\langle \mu^d(x),(\nabla_xu^d) (t,x)\rangle
+f(u^d(t,x))=0
\end{align}
with $u^d(T,x)=g^d(x)$ for $(t,x)\in (0,T)\times \R^d$.
\item There exists $(C_{\delta})_{\delta\in (0,1)}\subseteq(0,\infty)$, $\eta\in (0,\infty)$, $(\Psi_{d,\epsilon})_{d\in \N, \epsilon\in(0,1)}\subseteq \mathbf{N}$ such that for all 
$d\in \N$, $\epsilon\in(0,1)$ we have that 
$\mathcal{R}(\Psi_{d,\epsilon})\in C(\R^d,\R)$,  
\begin{align}
\mathcal{P}(\Psi_{d,\epsilon})\leq C_\delta \eta d^\eta\epsilon^{-(4+\delta)-6c}, \quad \text{and}
\quad
\left(
\int_{[0,1]^d}
\left\lvert
(\mathcal{R}(\Psi_{d,\epsilon}))(x)
-
u^d(0,x)
\right\rvert^\mathfrak{p}
dx\right)^\frac{1}{\mathfrak{p}}<\epsilon.
\end{align}

\end{enumerate}

\end{theorem}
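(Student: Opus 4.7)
\textbf{Proof plan for Theorem \ref{a08}.} The proof naturally splits into the purely PDE-theoretic item \eqref{c11} and the approximation statement. For \eqref{c11}, I would argue exactly as in the proof of \cref{c40}\eqref{t30}: the Lipschitz bounds in \eqref{d04} (passed to the limit $\varepsilon\downarrow 0$ via \eqref{d08}) together with \eqref{d09} and \eqref{d05} yield Lipschitz continuity and linear growth of $\mu^d$, $\sigma^d$, $g^d$, and $f$. Existence, uniqueness and polynomial growth of the viscosity solution $u^d$ then follow from \cite[Theorem~1.1]{beck2021nonlinear}, and $u^d$ admits the nonlinear Feynman--Kac representation as an SFPE with the forward diffusion $X^{d,\theta,t,x}$ driven by $\mu^d,\sigma^d$ and nonlinearity $f$.

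For the approximation statement, the plan is to implement the decomposition sketched in the paper. Introduce auxiliary SFPE solutions $u^{d,\varepsilon}$ associated to the perturbed coefficients $(\mu^d_\varepsilon,\sigma^d_\varepsilon,g^d_\varepsilon,f_\varepsilon)$ and to the Euler--Maruyama scheme $X^{d,\theta,K,\varepsilon,t,x}$ from \cref{s02}. Define the MLP approximation $U^{d,\theta,K,\varepsilon}_{n,m}$ by \eqref{k04d}. Then split
\begin{align*}
U^{d,0,K,\varepsilon}_{n,m}(0,x)-u^d(0,x)
=\bigl(U^{d,0,K,\varepsilon}_{n,m}(0,x)-u^{d,\varepsilon}(0,x)\bigr)+\bigl(u^{d,\varepsilon}(0,x)-u^d(0,x)\bigr).
\end{align*}
The first term is the MLP-plus-Euler error for the $\varepsilon$-problem and is controlled in $L^{\mathfrak p}$-sense directly by \cref{c02} (applied with $\mathfrak{p}\gets\mathfrak p$, $\beta\gets\beta$, suitable Lyapunov function $\varphi_d(x)=(d^{2c}+\lVert x\rVert^2)^{\beta\mathfrak p/2}$ up to a $d$-dependent constant, and coefficients $\mu^d_\varepsilon,\sigma^d_\varepsilon,g^d_\varepsilon,f_\varepsilon$, using \eqref{d04}, \eqref{d09}, \eqref{d05}), giving a bound of the form $C d^{C}\bigl[\mathfrak p^{n/2}e^{5cTn}e^{m^{\mathfrak p/2}/\mathfrak p}m^{-n/2}+K^{-1/2}\bigr]$. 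The second term is a perturbation of the SFPE coefficients; using \eqref{d08}, \eqref{d10} and the standard perturbation estimate \cite[Lemma~2.3]{CHW2022} (transferred from $L^2$ to $L^{\mathfrak p}$ via moment bounds on $X^{d,0,t,x}$ available because $\varphi_d$ is Lyapunov), I would obtain $\sup_{x\in[0,1]^d}\lvert u^{d,\varepsilon}(0,x)-u^d(0,x)\rvert\le \varepsilon\cdot Cd^{C}$.

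Next I would convert the MLP random field into a DNN. By \cref{k04c}, for every realization $\omega$ there exist DNNs $\Phi^{d,\theta,K,\varepsilon}_{n,m,0}$ realizing $x\mapsto U^{d,\theta,K,\varepsilon}_{n,m}(0,x,\omega)$, with depth
$n(\dim\mathcal D(\Phi_{f_\varepsilon})+L_{d,\varepsilon}-4)+\dim\mathcal D(\Phi_{g^d_\varepsilon})+L_{d,\varepsilon}-2$
and $\supnorm{\mathcal D(\cdot)}\le c_{d,\varepsilon}(3m)^n$, where $c_{d,\varepsilon}\le cd^c\varepsilon^{-c}$ by \eqref{c01d} and $L_{d,\varepsilon}\le Kcd^c\varepsilon^{-c}$ by \eqref{c01e}. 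Hence the number of parameters $\mathcal P(\Phi^{d,0,K,\varepsilon}_{n,m,0})$ is bounded by a constant times $K(cd^c\varepsilon^{-c})^2(3m)^{2n}n$. Combining the two error bounds with a Markov/Chebyshev argument (since the MLP is an $L^{\mathfrak p}$-random field, there exists at least one realization $\omega$ whose $L^{\mathfrak p}([0,1]^d)$-distance to $u^{d,\varepsilon}(0,\cdot)$ is no larger than its $(\mathfrak p,dx)$-average) yields a deterministic DNN $\Psi_{d,\epsilon}$ satisfying the target bound.

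The main calibration, and the real obstacle, is choosing $n,m,K,\varepsilon$ as functions of $(d,\epsilon)$ so that both error contributions are $\le\epsilon/2$ while $\mathcal P(\Psi_{d,\epsilon})\le C_\delta \eta d^\eta\epsilon^{-(4+\delta)-6c}$. I would mimic \cref{c40}: take $M_n=\max\{k\in\N:k\le\exp(|\ln n|^{1/2})\}$, define $n(d,\epsilon)=N_{\varepsilon(d,\epsilon)}$ and $K=M_n^n$ as in \cref{c40}, and set $\varepsilon=\epsilon/(Cd^{C})$ so that the perturbation contribution is $\le\epsilon/2$. The asymptotic analysis of \cite[Lemma~5.1]{AJKP2024} then gives $(3M_n)^{2n}\le C_\delta\varepsilon^{-(4+\delta)}$ at the critical $n=N_\varepsilon$, and $K\le (3M_n)^{2n}$ as well, so $K(3m)^{2n}\le C_\delta \varepsilon^{-2(4+\delta)}$. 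Multiplying by the $\varepsilon^{-2c}$ from the $c_{d,\varepsilon}^2$ factor and absorbing $d$-powers into $\eta d^\eta$ yields the stated bound $C_\delta\eta d^\eta\epsilon^{-(4+\delta)-6c}$, where the $6c$ absorbs the polynomial blow-up from $c_{d,\varepsilon}^2\cdot K$ and the $d^C$-rescaling of $\varepsilon$. The delicate bookkeeping of these exponents, in particular tracking how $\varepsilon\sim \epsilon/d^C$ interacts with the $\varepsilon^{-c}$ size bounds, is the point where care is needed.
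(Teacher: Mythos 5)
Your proposal is correct and follows essentially the same route as the paper: the decomposition $U^{d,0,K,\varepsilon}_{n,m}-u^{d,\varepsilon}$ plus $u^{d,\varepsilon}-u^d$, the $L^{\mathfrak p}$-bound from \cref{c02}, the pointwise perturbation estimate from \cite{CHW2022} (the paper invokes Lemma~2.4 there, and no $L^2\to L^{\mathfrak p}$ transfer is actually needed since $u^{d,\varepsilon}-u^d$ is deterministic), the DNN representation of \cref{k04c}, the Tonelli/realization-selection argument, and the calibration via $(M_n)_{n\in\N}$ and \cite[Lemma~5.1]{AJKP2024}. The only deviation is your choice $K=(M_n)^n$ instead of the paper's $K_\varepsilon\approx\varepsilon^{-2}$, which makes the exponent bookkeeping cruder (roughly $\epsilon^{-(8+2\delta)-4c}$ instead of the paper's $\epsilon^{-(6+\delta)-4c}$) but still lands within the stated bound $\epsilon^{-(4+\delta)-6c}$ because $c\geq\beta^2\mathfrak{p}^2$ provides the needed slack, exactly as you indicate.
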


\begin{proof}
[Proof of \cref{a08}]
Let $p\in [3,\infty)$ satisfy that $p=\beta\mathfrak{p}$.
For every $d\in \N$ let $\varphi_d\in C(\R^d,[1,\infty))$ satisfy for all $x\in \R^d$ that
\begin{align}
\varphi_d (x)= 2^p c^p d^{pc}(d^{2c}+\lVert x\rVert^2)^{\frac{p}{2}}.\label{d06}
\end{align}
Then \eqref{d05} shows for all $d\in \N$, $\varepsilon\in(0,1)$, $x\in \R^d$ that
\begin{align} \begin{split} 
&\max
\{
\lVert \mu^d_\varepsilon(0)\rVert+c\lVert x\rVert,
\lVert \sigma^d_\varepsilon(0)\rVert+c\lVert x\rVert
\}\\
&\leq cd^c+ c\lVert x\rVert
=c(d^c+\lVert x\rVert)\leq 
2c(d^{2c}+\lVert x\rVert^2)^\frac{1}{2}\leq c (\varphi_d(x))^\frac{1}{p}.\end{split}\label{d07}
\end{align}
Next,
\cite[Lemma~2.6]{HN2022a} (applied for every $d\in \N$ with 
$d\gets d$, $m\gets d$, $a\gets d^{2c}$, $c\gets 0$,
$p\gets p/2$, 
$\mu\gets 0$, $\sigma\gets 0$, $\varphi\gets \varphi_d/(2^pc^pd^{pc})$ in the notation of \cite[Lemma~2.6]{HN2022a}) and \eqref{d06} show
for all $x,z\in \R^d$ that
\begin{align}
\lVert (\varphi_d'(x))(z) \rVert\leq p (\varphi_d(x))^{1-\frac{1}{p}} \lVert z\rVert, \quad
\lVert (\varphi_d''(x))(z,z) \rVert\leq p^2 (\varphi(x))^{1-\frac{2}{p}} \lVert z\rVert^2.
\end{align}
This, \eqref{d07}, and the fact that $p^2=\beta^2\mathfrak{p}^2\leq c$ show for all $d\in \N$,
$\varepsilon\in (0,1)$, $x,z\in \R^d$ that
\begin{align}
\max\!\left\{
\frac{\lvert(\varphi_d'(x))(z)\rvert}{(\varphi_d(x))^\frac{p-1}{p}\lVert z\rVert},
\frac{(\varphi_d''(x))(z,z)}{(\varphi_d(x))^\frac{p-2}{p}\lVert z\rVert^2},
\frac{c\lVert x\rVert+\lVert\mu^d_\varepsilon(0)\rVert}{(\varphi_d(x))^\frac{1}{p}},
\frac{c\lVert x\rVert+\lVert\sigma^d_\varepsilon(0)\rVert}{(\varphi_d(x))^\frac{1}{p}}
\right\}\leq c,\label{c08b}
\end{align}
This, \eqref{d04},  and \eqref{d08} we have for all 
$d\in \N$,
 $x,z\in \R^d$
that
\begin{align}
\max\!\left\{
\frac{\lvert(\varphi_d'(x))(z)\rvert}{(\varphi_d(x))^\frac{p-1}{p}\lVert z\rVert},
\frac{(\varphi_d''(x))(z,z)}{(\varphi_d(x))^\frac{p-2}{p}\lVert z\rVert^2},
\frac{c\lVert x\rVert+\lVert\mu^d(0)\rVert}{(\varphi_d(x))^\frac{1}{p}},
\frac{c\lVert x\rVert+\lVert\sigma^d(0)\rVert}{(\varphi_d(x))^\frac{1}{p}}
\right\}\leq c\label{c08c}
\end{align} and
\begin{align}
\max\{
\lVert
\mu^d(x)-\mu^d(y)
\rVert,
\lVert
\sigma^d(x)-\sigma^d(y)
\rVert
\}
\leq c\lVert x-y\rVert.\label{d04a}
\end{align}
Furthermore, \eqref{d09}, \eqref{d05}, and \eqref{d08}
prove for all $d\in \N$, $w,v\in \R$, $x,y\in \R^d$ that
\begin{align}
\lvert f(w)-f(v)\rvert\leq c\lvert w-v\rvert, \quad 
\lvert
g^d(x)-g^d(y)
\rvert\leq c\frac{(d^c+\lVert x\rVert)^\beta+(d^c+\lVert y\rVert)^\beta}{2\sqrt{T}}\lVert x-y\rVert,\label{d09a}
\end{align}
\begin{align}
\max\!\left \{
\lVert\mu^d(0)\rVert,
\lVert\sigma^d(0)\rVert,
\lvert f(0)\rvert, \lvert g^d(0)\rvert
\right\}\leq cd^c.\label{d05a}
\end{align}
Let $(\Omega,\mathcal{F},\P, (\F_t)_{t\in[0,T]})$ be a filtered probability space which satisfies the usual conditions.
Let 
$  \Theta = \bigcup_{ n \in \N }\! \Z^n$.
Let $\mathfrak{t}^\theta\colon \Omega\to[0,1]$, $\theta\in \Theta$, be identically distributed and independent random variables. Assume for all $t\in(0,1)$ that $\P(\mathfrak{t}^0\leq t)=t$. For every $d\in \N$ let $W^{d,\theta}\colon [0,T]\times\Omega \to \R^{d}$, $\theta\in\Theta$, be independent standard $(\F_{t})_{t\in[0,T]}$-Brownian motions. 
Assume that
$(\mathfrak{t}^\theta)_{\theta\in\Theta}$ and
$(W^{d,\theta})_{\theta\in\Theta,d\in \N}$ are independent. For every $K\in \N$ 
let
$\rdown{\cdot}_K\colon \R\to \R$ satisfy for all $t\in \R$
that $\rdown{t}_K=\max ( \{0,\frac{T}{K},\ldots,\frac{(K-1)T}{T},T\}\cap ((-\infty,t)\cup\{0\}) )$. 
For every $\theta\in \Theta$, 
$d,K\in \N$, $\varepsilon\in (0,1)$, 
$t\in[0,T]$, $x\in \R^d$
let $X^{d,\theta,K,\varepsilon,t,x}= (X^{d,\theta,K,\varepsilon,t,x}_s)_{s\in [t,T]}\colon [t,T]\times \Omega\to \R^d$ satisfy for all $s\in [t,T]$ that
$X^{d,\theta,K,\varepsilon,t,x}_t=x$ and
\begin{align} \begin{split} 
X^{d,\theta,K,\varepsilon,t,x}_s&=X^{d,\theta,K,\varepsilon,t,x}_{\max\{t,\rdown{s}_K\}}
+\mu^d_\varepsilon(X^{d,\theta,K,\varepsilon,t,x}_{\max\{t,\rdown{s}_K\}})
(s-\max \{t,\rdown{s}_K\})\\
&
\quad \qquad\qquad
+
\sigma^d_\varepsilon(X^{d,\theta,K,\varepsilon,t,x}_{\max\{t,\rdown{s}_K\}})
(W^\theta_s-W^\theta_{\max \{t,\rdown{s}_K\}}).\label{c09a}\end{split}
\end{align}
Let $U^{d,\theta,K,\varepsilon}_{n,m}\colon [0,T]\times \R^d\times \Omega\to \R$, $n\in \Z$, $K,d,m\in \N$, $\theta\in \Theta$, 
$\varepsilon\in (0,1)$,
satisfy for all
$\theta\in \Theta$, $K,d,m\in \N$, $n\in \N_0$, $t\in [0,T]$, $x\in \R^d$, $\varepsilon\in (0,1)$ that
$
{U}_{-1,m}^{d,\theta,K,\varepsilon}(t,x)={U}_{0,m}^{d,\theta,K,\varepsilon}(t,x)=0$ and
\begin{align} \begin{split} 
\label{t27d}
&  {U}_{n,m}^{d,\theta,K,\varepsilon}(t,x)
=  \frac{1}{m^n}\sum_{i=1}^{m^n}
g^d_\varepsilon(X^{d,(\theta,0,-i),K,\varepsilon,t,x}_T)\\
&+
\sum_{\ell=0}^{n-1}\frac{T-t}{m^{n-\ell}}
    \sum_{i=1}^{m^{n-\ell}}
     \bigl( f_\varepsilon\circ {U}_{\ell,m}^{d,(\theta,\ell,i),K,\varepsilon}
-\1_{\N}(\ell)
f_\varepsilon\circ {U}_{\ell-1,m}^{d,(\theta,-\ell,i),K,\varepsilon}
\bigr)\!
\left(t+(T-t)\mathfrak{t}^{(\theta,\ell,i)},X_{t+(T-t)\mathfrak{t}^{(\theta,\ell,i)}}^{d,(\theta,\ell,i),K,\varepsilon,t,x}\right).\end{split}
\end{align}
Next, \eqref{d04a} and \eqref{d04} prove for all $d\in \N$, $\varepsilon\in (0,1)$, $t\in [0,T]$, $\theta\in \Theta$ that there exist up to indistinguishability unique continuous
random fields
$X^{d,\theta,\varepsilon,t,\cdot }=(X^{d,\theta,t,x}_s)_{s\in [t,T],x\in \R^d},
X^{d,\theta,t,\cdot }=(X^{d,\theta,t,x}_s)_{s\in [t,T],x\in \R^d}
\colon [t,T]\times \R^d\times \Omega\to \R^d
$ which satisfy  for all $x\in \R^d$
 that 
$
(X^{d,\theta,\varepsilon,t,x}_s)_{s\in [t,T]}
$,
$
(X^{d,\theta,t,x}_s)_{s\in [t,T]}
$
are $(\F_s)_{s\in [t,T]}$-adapted and which satisfy  for all $s\in [t,T]$, $x\in \R^d$  that  $\P$-a.s.\
\begin{align}
X^{d,\theta,\varepsilon,t,x}_s=x+\int_{t}^{s}\mu^d_\varepsilon (X^{d,\theta,\varepsilon,t,x}_r)\,dr
+\int_t^s\sigma^d_\varepsilon(X^{d,\theta,\varepsilon,t,x}_r)\,dW^{d,\theta}_r,\label{c33}
\end{align}
\begin{align}\label{c32}
X^{d,\theta,t,x}_s=x+\int_{t}^{s}\mu^d (X^{d,\theta,t,x}_r)\,dr
+\int_t^s\sigma^d(X^{d,\theta,t,x}_r)\,dW^{d,\theta}_r.
\end{align}
Hence,
\cite[Lemma~2.1]{CHW2022} (applied for every $\theta\in \Theta$, $d\in \N$, $\varepsilon\in (0,1)$ with
$d\gets d$, $m\gets m$, $c\gets c$, $\kappa\gets 1$, $p\gets p$, $\varphi\gets \varphi_d$, $\mu\gets \mu_\varepsilon^d $,
$\sigma\gets \sigma_\varepsilon^d $
and applied for every $\theta\in \Theta$, $d\in \N$ with
$d\gets d$, $m\gets m$, $c\gets c$, $\kappa\gets 1$, $p\gets p$, $\varphi\gets \varphi_d$, $\mu\gets \mu^d $,
$\sigma\gets \sigma^d $ in the notation of 
\cite[Lemma~2.1]{CHW2022}), \eqref{d04},
and \eqref{d04a} prove
for all $d\in \N$, $\varepsilon\in (0,1)$, $x\in \R^d$ that
\begin{align}
\langle 
(\nabla\varphi_d )(x),\mu^d_\varepsilon(x)\rangle +\frac{1}{2}\mathrm{trace}
(\sigma^d_\varepsilon (\sigma^d_\varepsilon )^\top \mathrm{Hess}\varphi_d(x)  )\leq 1.5c^3\varphi_d(x),
\end{align}
\begin{align}
\langle 
(\nabla\varphi_d )(x),\mu^d(x)\rangle +\frac{1}{2}\mathrm{trace}
(\sigma^d (\sigma^d )^\top (\mathrm{Hess}\varphi_d)(x)  )\leq 1.5c^3\varphi_d(x),
\end{align}
\begin{align}
\max\!\left \{
\E \!\left[\varphi_d(X^{d,\theta,\varepsilon,t,x}_s )\right],
\E \!\left[\varphi_d(X^{d,\theta,t,x}_s )\right]
\right\}
\leq e^{1.5c^3(s-t)}\varphi(x),
\end{align}
This,
\cite[Proposition 2.2]{HJKN2020}
(applied for every $d\in \N$ with 
$d\gets d$, $L\gets c$, $T\gets T$, $\mathcal{O}\gets \R^d$, $\lVert\cdot \rVert\gets \lVert\cdot\rVert$,
$f\gets ([0,T]\times \R^d\times \R \ni (t,x,w)\mapsto f^d(w)\in \R )$, $g\gets g^d$,
$(X_{t,s}^x)_{t\in [0,T], s\in [t,T], x\in \R^d}\gets 
(X_{s}^{d,0,t,x})_{t\in [0,T], s\in [t,T], x\in \R^d}
$, $V\gets ([0,T]\times\R^d\ni (s,x)\mapsto e^{1.5c^3(T-s)}\varphi_d(s,x)\in (0,\infty) ) $
and 
applied for every $d\in \N$, $\varepsilon\in (0,1)$ with 
$d\gets d$, $L\gets c$, $T\gets T$, $\mathcal{O}\gets \R^d$, $\lVert\cdot \rVert\gets \lVert\cdot\rVert$,
$f\gets ([0,T]\times \R^d\times \R \ni (t,x,w)\mapsto f^d_\varepsilon(w)\in \R )$, $g\gets g^d_\varepsilon$,
$(X_{t,s}^x)_{t\in [0,T], s\in [t,T], x\in \R^d}\gets 
(X_{s}^{d,0,\varepsilon,t,x})_{t\in [0,T], s\in [t,T], x\in \R^d}
$, $V\gets ([0,T]\times\R^d\ni (s,x)\mapsto e^{1.5c^3(T-s)}\varphi_d(s,x)\in (0,\infty) ) $
in the notation of \cite[Proposition~2.2]{HJKN2020}), 
\eqref{d09}, \eqref{d05}, \eqref{d06}, \eqref{d09a}, and \eqref{d05a}
show for all $d\in \N$, $\varepsilon\in (0,1)$ that there exist unique measurable functions $u^d, u^{d,\varepsilon}\colon [0,T]\times \R^d \to \R$ such that for all $t\in [0,T]$, $x\in \R^d$ we have that
$
\sup_{s\in [0,T],y\in \R^d}
\frac{\lvert u^d(s,y)\rvert+\lvert u^{d,\varepsilon}(s,y)\rvert}{\varphi_d(s,y)}<\infty
$,
$\E \!\left[\lvert g^d(X^{d,0,t,x}_T)\rvert\right]
+\int_{t}^{T}\E \!\left[\lvert f( u^d(s,X^{d,0,t,x}_s) )\rvert\right]ds
+
\E \!\left[\lvert g^d(X^{d,0,\varepsilon,t,x}_T)\rvert\right]
+\int_{t}^{T}\E \!\left[\lvert f( u^d(s,X^{d,0,\varepsilon,t,x}_s) )\rvert\right]ds<\infty
$,
\begin{align}\label{c31}
u^d(t,x)=\E \!\left[g^d(X^{d,0,t,x}_T)\right]
+\int_{t}^{T}\E \!\left[f( u^d(s,X^{d,0,t,x}_s) )\right]ds
\end{align}
and
\begin{align}\label{c30}
u^{d,\varepsilon}(t,x)=\E \!\left[g^d_\varepsilon(X^{d,0,\varepsilon,t,x}_T)\right]
+\int_{t}^{T}\E \!\left[f_\varepsilon( u^d(s,X^{d,0,\varepsilon,t,x}_s) )\right]ds.
\end{align}
Next, the triangle inequality, \eqref{d04}, \eqref{d05},  and the fact that $\forall\,x\in \R\colon (1+x)^2\leq 2(1+x^2)$ show 
for all $d\in \N$, $x\in \R^d$, $\varepsilon\in (0,1)$ that
\begin{align} \begin{split} 
\langle
x,\mu^d_\varepsilon(x)
\rangle
&\leq \lVert x\rVert \left(\lVert \mu^d_\varepsilon(x)-\mu^d_\varepsilon(0)  \rVert
+
\lVert
\mu^d_\varepsilon(0)  \rVert\right)\\
&\leq 
\lVert x\rVert(c\lVert x\rVert+cd^c)\\
&
\leq (1+\lVert x\rVert)^2cd^c\\
&\leq 2cd^c(1+\lVert x\rVert^2).
\end{split}\label{c19}
\end{align}
Furthermore, the Cauchy--Schwarz inequality implies for all
$d\in \N$, $x,y\in \R^d$, $\varepsilon\in (0,1)$ that
\begin{align}
\lVert\sigma^d_\varepsilon(x)y\rVert^2= \sum_{i=1}^d \left\lvert\sum_{j=1}^d(\sigma^d_\varepsilon)_{ij}(x)y_j\right\rvert^2
\leq \sum_{i=1}^d\left(\sum_{j=1}^{d}\lvert(\sigma^d_\varepsilon)_{ij}(x)\rvert^2\right)\left(\sum_{j=1}^{d}\lvert y_j\rvert^2\right)
\leq \lVert\sigma(x)\rVert^2\lVert y\rVert^2.
\end{align}
This and \eqref{d04} show for all
$d\in \N$, $x,y\in \R^d$, $\varepsilon\in (0,1)$ that
\begin{align}
\lVert\sigma^d_\varepsilon(x)y\rVert\leq 
\lVert\sigma^d_\varepsilon(x)\rVert
\lVert y\rVert
\leq (\lVert\sigma^d_\varepsilon(x)-\sigma^d_\varepsilon(0)\rVert+\lVert\sigma^d_\varepsilon(0)\rVert)
\lVert y\rVert
\leq (c\lVert x\rVert+ cd^c)\lVert y\rVert
\leq cd^c(1+\lVert x\rVert)\lVert y\rVert.
\end{align}
This, \eqref{c19}, and \eqref{d08} prove for all
$d\in \N$, $x,y\in \R^d$, $\varepsilon\in (0,1)$ that
\begin{align}
\langle
x,\mu^d(x)
\rangle\leq 2cd^c(1+\lVert x\rVert^2),\quad 
\lVert\sigma^d(x)y\rVert \leq cd^c(1+\lVert x\rVert)\lVert y\rVert.
\end{align}
This, 
\cite[Theorem~1.1]{beck2021nonlinear}
(applied with
$d\gets d$, $L\gets 2cd^c$, $T\gets T$, $\mu\gets \mu^d$, $\sigma\gets \sigma^d$,
$f\gets (\R^d\times \R \ni (x,w)\mapsto f^d(w)\in \R )$,
$g\gets g^d$, $W\gets W^{d,\theta}$ in the notation of 
\cite[Theorem~1.1]{beck2021nonlinear}),
\eqref{d04a}, the fact that $g$ is polynomially growing (cf. \eqref{d09}--\eqref{d05}), and the fact that 
$u^d$ is polynomially growing
 show that $u^d$ is the unique at most polynomially growing viscosity solution of 
\begin{align}
\frac{\partial u^d}{\partial t}(t,x)
+\frac{1}{2}\mathrm{trace}(\sigma^d(\sigma^d(x))^\top
(\mathrm{Hess}_x u^d(t,x) ))
+\langle \mu(x),(\nabla_xu^d) (t,x)\rangle
+f(u^d(t,x))=0
\end{align}
with $u^d(T,x)=g(x)$ for $(t,x)\in (0,T)\times \R^d$. This establishes \eqref{c11}.

Next, \eqref{d04}--\eqref{d10}
show for all $d\in \N$, $\varepsilon\in (0,1)$, $x,y\in \R^d$ that
\begin{align}
\lvert Tf_\varepsilon(0)\rvert
\leq (\varphi_d (x))^\frac{\beta}{p}
,\quad 
\lvert
g^d_\varepsilon(x)
-g^d_\varepsilon(y)
\rvert\leq \frac{(\varphi_d(x) + \varphi_d(y))^\frac{\beta}{p}}{\sqrt{T}}\lVert x-y\rVert,\quad 
\lvert g^d_\varepsilon(x)\rvert\leq (\varphi_d(x))^\frac{\beta}{p},\label{d11}
\end{align}
\begin{align}
\lvert Tf (0)\rvert
\leq (\varphi_d (x))^\frac{\beta}{p}
,\quad 
\lvert
g^d (x)
-g^d (y)
\rvert\leq \frac{(\varphi_d(x) + \varphi_d(y))^\frac{\beta}{p}}{\sqrt{T}}\lVert x-y\rVert,\quad 
\lvert g^d (x)\rvert\leq (\varphi_d(x))^\frac{\beta}{p},
\end{align}
\begin{align}
\max\{\lvert f_\varepsilon(v)-f(v)\rvert,
\lVert
\mu^d_\varepsilon(x)-\mu^d(x)
\rVert,
\lVert
\sigma^d_\varepsilon(x)-\sigma^d(x)
\rVert,
\lVert
g^d_\varepsilon(x)-g^d(x)
\rVert
\}\leq \varepsilon((\varphi_d(x))^\beta + \lvert v\rvert^\beta) .\label{d08b}
\end{align}
This, 
\eqref{c08b}, \eqref{c08c}, \eqref{d04}, \eqref{d04a}, 
and
\cite[Lemma~2.4]{CHW2022}
(applied
for every $d\in \N$, $\varepsilon\in (0,1)$
 with 
$d\gets d$, $m\gets d$, $\delta\gets \varepsilon$,
$\beta\gets \beta$, $b\gets 1$, $c\gets c$, $q\gets \beta$, $p\gets p$,
$\varphi\gets \varphi_d$,
$g_1\gets g^d_\varepsilon$,
$\mu_1\gets \mu^d_\varepsilon$,
$\sigma_1\gets \sigma^d_\varepsilon$,
$f_1\gets ([0,T]\times\R^d\times\R\ni (t,x,w)\mapsto f_\varepsilon(w)\in \R)$,
$W\gets W^{d,0}$,
$(X^{x,1}_{t,s})_{x\in \R^d,t\in [0,T],s\in [t,T]}
\gets 
(X^{d,0,\varepsilon,t,x}_s)_{x\in \R^d,t\in [0,T],s\in [t,T]}
$,
$g_2\gets g^d$,
$\mu_2\gets \mu^d$,
$\sigma_2\gets \sigma^d$,
$f_2\gets ([0,T]\times\R^d\times\R\ni (t,x,w)\mapsto f(w)\in \R)$,
$(X^{x,2}_{t,s})_{x\in \R^d,t\in [0,T],s\in [t,T]}
\gets 
(X^{d,0,t,x}_s)_{x\in \R^d,t\in [0,T],s\in [t,T]}
$ in the notation of 
\cite[Lemma~2.4]{CHW2022})
show for all $d\in \N$, $\varepsilon\in (0,1)$, $t\in [0,T]$, $x\in \R^d$ that
\begin{align}
\lvert
u^{d,\varepsilon}(t,x)-u^d(t,x)\rvert
\leq \varepsilon 
2^{\beta+2}e^T e^{5\beta^2c^4 + 2^\beta c^\beta T^\beta+4c^2}
(\varphi(x))^{\beta+0.5}.\label{c12}
\end{align}
Next, \cref{c02}
(applied for all $d,K\in \N$, $\varepsilon\in (0,1)$ with
$d\gets d$, $K\gets K$, $T\gets T$, $\mathfrak{p}\gets \mathfrak{p}$,
$\beta\gets \beta$, $b\gets 1$, $c\gets c$, $p\gets p$, 
$\varphi \gets \varphi_d$,
$g\gets g^d_\varepsilon$,
$f\gets f_\varepsilon$,
$\mu \gets \mu^d_\varepsilon$,
$\sigma \gets \sigma^d_\varepsilon$,
$(\mathfrak{t}^\theta)_{\theta\in \Theta}
\gets (\mathfrak{t}^\theta)_{\theta\in \Theta}
$,
$(W^\theta)_{\theta\in \Theta}\gets 
(W^{d,\theta})_{\theta\in \Theta}
$,
$(Y^{\theta,t,x}_s)_{\theta\in \Theta, t\in [0,T], s\in [t,T],x\in \R^d }
\gets 
(X^{d,\theta,K,\varepsilon,t,x}_s)_{\theta\in \Theta, t\in [0,T], s\in [t,T],x\in \R^d }
$
in the notation of \cref{c02}), 
\eqref{c08b}, \eqref{d11}, \eqref{d09}, \eqref{d08}, \eqref{c09a}, \eqref{t27d}, and
the independence and distributional properties imply 
that for all $d,K,m,n\in \N$, $\varepsilon\in (0,1)$, 
$t\in [0,T]$, $x\in \R^d$ we have that
\begin{align} \begin{split} 
&\left\lVert
{U}_{n,m}^{d,0,K,\varepsilon}(t,x)-u^{d,\varepsilon}(t,x)\right\rVert_\mathfrak{p}\leq  12c^2e^{9c^3T}
(\varphi_d(x))^{\frac{\beta+1}{p}}\left[
2\mathfrak{p}^{\frac{n}{2}}e^{5cTn}e^{m^{\mathfrak{p}/2}/\mathfrak{p}}m^{-n/2}+\frac{1}{\sqrt{K}}
\right].
\end{split}\label{c34}
\end{align}
This, the triangle inequality, and \eqref{c12} prove for all $d,K,m,n\in \N$, $\varepsilon\in (0,1)$, 
$t\in [0,T]$, $x\in \R^d$ that
\begin{align} \begin{split} 
&
\left\lVert
{U}_{n,m}^{d,0,K,\varepsilon}(t,x)-
u^d(t,x)
\right\rVert_\mathfrak{p}\leq 
\left\lVert
{U}_{n,m}^{d,0,K,\varepsilon}(t,x)-u^{d,\varepsilon}(t,x)\right\rVert_\mathfrak{p}
+\lvert
u^{d,\varepsilon}(t,x)-u^d(t,x)\rvert\\
&\leq 12^{\beta+2}c^2
e^{9c^3T+T+5\beta^2c^4+2^\beta c^\beta T^\beta+4c^2}(\varphi_d(x))^{\beta+0.5}
\left[
2\mathfrak{p}^{\frac{n}{2}}e^{5cTn}e^{m^{\mathfrak{p}/2}/\mathfrak{p}}m^{-n/2}+\frac{1}{\sqrt{K}}+
\varepsilon\right].
\end{split}
\end{align}
Hence, 
\eqref{d06} shows that
there exists $\kappa\in (0,\infty)$ such that for all
$ d,K,n,m\in \N$, $\varepsilon\in (0,1)$ we have that 
\begin{align} \begin{split} 
&
\left(
\int_{[0,1]^d}
\left\lVert
{U}_{n,m}^{d,0,K,\varepsilon}(0,x)-
u^d(0,x)
\right\rVert_\mathfrak{p}^\mathfrak{p}\,dx
\right)^\frac{1}{\mathfrak{p}}\\
&
\leq 
12^{\beta+2}c^2
e^{9c^3T+T+5\beta^2c^4+2^\beta c^\beta T^\beta+4c^2}
\left(\int_{[0,1]^d}
(\varphi_d(x))^{\mathfrak{p}(\beta+0.5)}\,dx
\right)^\frac{1}{\mathfrak{p}}
\left[
2\mathfrak{p}^{\frac{n}{2}}e^{5cTn}e^{m^{\mathfrak{p}/2}/\mathfrak{p}}m^{-n/2}+\frac{1}{\sqrt{K}}+
\varepsilon\right]\\
&\leq \kappa d^\kappa
\left[
2\mathfrak{p}^{\frac{n}{2}}e^{5cTn}e^{m^{\mathfrak{p}/2}/\mathfrak{p}}m^{-n/2}+\frac{1}{\sqrt{K}}+
\varepsilon\right]\\
&\leq 
\kappa d^\kappa
\left[\left(\frac{
2\mathfrak{p}^{\frac{1}{2}}e^{5cT}\exp (\frac{m^{\mathfrak{p}/2}}{n})}{m^\frac{1}{2}}\right)^n+\frac{1}{\sqrt{K}}+
\varepsilon\right].
\end{split}\label{c13}
\end{align}
For the next step let $(M_n)_{n\in \N}\colon \N\to \N$ satisfy  that
$\liminf_{j\to\infty}M_j=\infty$, $\limsup_{j\to\infty} \frac{(M_j)^{\mathfrak{p}/2}}{j}<\infty$, and $\sup_{k\in \N}\frac{M_{k+1}}{M_k}<\infty $ (see, e.g., \cite[Lemma~4.5]{HJKP2021} for an example). For every $\varepsilon\in (0,1)$, $d\in \N$ let
\begin{align}\label{c22}
K_\varepsilon= \inf\{k\in \N\colon 1/\sqrt{k}\leq \varepsilon\},
\end{align}
\begin{align}
N_\varepsilon=\inf\!\left\{
n\in \N\colon \left(
\frac{2\mathfrak{p}^{\frac{1}{2}}e^{5cT}\exp ( \frac{(M_n)^{\mathfrak{p}/2}}{n})}{(M_n)^\frac{1}{2}}
\right)^n\leq \varepsilon\right\},\label{c14}
\end{align}
\begin{align}
L_{d,\varepsilon}=K_\varepsilon(
\max\{\dim(\mathcal{D}(\Phi_{\mu^d_\varepsilon})) ,\dim(\mathcal{D}(\Phi_{\sigma^d_\varepsilon,0}))\}
-2)+2,\label{c15}
\end{align}
\begin{align}
c_{d,\varepsilon}=
3\max\!\left\{ d\mathfrak{d},
\supnorm{\mathcal{D}(\Phi_{f_\varepsilon})},
\supnorm{\mathcal{D}(\Phi_{g^d_\varepsilon})},
\supnorm{\mathcal{D}(\Phi_{\mu^d_\varepsilon})}
,
\supnorm{
\mathcal{D}(\Phi_{\sigma^d_\varepsilon,0})}\right\}
.\label{c16}
\end{align}
For every $\epsilon\in (0,1)$, $d\in \N$ let 
\begin{align}\label{c20}
\varepsilon(d,\epsilon)=\frac{\epsilon}{3\kappa d^\kappa}
\end{align}
For every $\delta\in (0,1)$ let
\begin{align}
C_\delta=\sup_{\varepsilon\in (0,1)}\left[\varepsilon^{4+\delta}N_\varepsilon(3M_{N_\varepsilon})^{2N_\varepsilon}\right].\label{c21}
\end{align}
Then \cite[Lemma~5.1]{AJKP2024}
(applied with
$L\gets 1$, $T\gets 2\mathfrak{p}^{\frac{1}{2}}e^{5cT}-1$, $(m_{k})_{k\in \N}\gets (M_{k})_{k\in \N}$
in the notation of \cite[Lemma~5.1]{AJKP2024})
implies for all $\delta\in (0,1)$ that
$C_\delta<\infty$. Furthermore,
\eqref{c22} prove for all $\varepsilon\in (0,1)$ that
$\frac{1}{\sqrt{K_\varepsilon-1}}>\varepsilon$, i.e.,
\begin{align}
K_\varepsilon=K_\varepsilon-1+1<\varepsilon^{-2}+1<2\varepsilon^{-2}.\label{c23}
\end{align}
Next,  Tonelli's theorem, \eqref{c13}--\eqref{c14}, and \eqref{c20} show for all
$d\in \N$, $\epsilon\in (0,1)$ that
\begin{align} \begin{split} 
&\E \!\left[
\int_{[0,1]^d}
\left\lvert
{U}_{N_{\varepsilon(d,\epsilon)},M_{N_{\varepsilon(d,\epsilon)}}}^{d,0,K_{\varepsilon(d,\epsilon)},\varepsilon(d,\epsilon)}(t,x)-
u^d(t,x)
\right\rvert^\mathfrak{p}
dx\right]\\
&=
\int_{[0,1]^d}
\E \!\left[
\left\lvert
{U}_{N_{\varepsilon(d,\epsilon)},M_{N_{\varepsilon(d,\epsilon)}}}^{d,0,K_{\varepsilon(d,\epsilon)},\varepsilon(d,\epsilon)}(t,x)-
u^d(t,x)
\right\rvert^\mathfrak{p}
\right]dx\\
&\leq \left(\kappa d^\kappa
\left[ \left(
\frac{2\mathfrak{p}^{\frac{1}{2}}e^{5cT}\exp\!\left ( \frac{\left(M_{N_{\varepsilon(d,\epsilon)}}\right)^{\mathfrak{p}/2}}{N_{\varepsilon(d,\epsilon)}}\right)}{(M_{N_{\varepsilon(d,\epsilon)}})^\frac{1}{2}}
\right)^n+\frac{1}{\sqrt{K_{\varepsilon(d,\epsilon)}}}+
\varepsilon(d,\epsilon)\right]\right)^\mathfrak{p}\\
&\leq \left(\kappa d^\kappa\left[\varepsilon(d,\epsilon)+\varepsilon(d,\epsilon)+\varepsilon(d,\epsilon)\right] \right)^\mathfrak{p}=\epsilon^\mathfrak{p}.
\end{split}
\end{align}
Therefore, for every $d\in \N$, $\epsilon\in (0,1)$ there exists $\omega(d,\epsilon)\in \Omega$ such that
\begin{align}
\int_{[0,1]^d}
\left\lvert
{U}_{N_{\varepsilon(d,\epsilon)},M_{N_{\varepsilon(d,\epsilon)}}}^{d,0,K_{\varepsilon(d,\epsilon)},\varepsilon(d,\epsilon)}(t,x, \omega(d,\epsilon))-
u^d(t,x)
\right\rvert^\mathfrak{p}
dx<\epsilon^\mathfrak{p}.\label{c26}
\end{align}
Furthermore, \cref{k04c} and \eqref{c15} prove that there exists
$(\Phi^\omega_{d,\varepsilon})_{d\in \N,\varepsilon \in (0,1), \omega\in \Omega}\subseteq \mathbf{N}$ such that for all
$d\in \N $, $\varepsilon\in (0,1)$, $\omega\in \Omega$
we have that
\begin{align} \begin{split} 
\dim(\mathcal{D}(\Phi_{d,\varepsilon} ))
&=N_\varepsilon\left(\dim\!\left(\mathcal{D}(\Phi_{f_\varepsilon})\right)
+L_{d,\varepsilon}-4 \right)
+\dim \! \left(\mathcal{D}(\Phi_{g^d_\varepsilon})\right)
+L_{d,\varepsilon}-2,
\end{split}\label{c18}\end{align}
\begin{align}
\supnorm{\mathcal{D}(\Phi^\omega_{d,\varepsilon})}
\leq c_{d,\varepsilon}(3M_{N_\varepsilon})^{N_\varepsilon}
,\quad {U}_{N_\varepsilon,M_{N_\varepsilon}}^{d,0,K_\varepsilon,\varepsilon}(0,x,\omega)=(\mathcal{R}(\Phi^\omega_{d,\varepsilon}))(x).\label{c19a}\end{align}
Next, \eqref{c15} and \eqref{c01e} show for all
$d\in \N$, $\varepsilon\in (0,1)$ that
$
L_{d,\varepsilon}\leq K_\varepsilon cd^c \varepsilon^{-c}
$. This,  
\eqref{c18}, and \eqref{c01e} show for all
$d\in \N$, $\varepsilon\in (0,1)$, $\omega\in \Omega$ that
\begin{align} \begin{split} 
\dim(\mathcal{D}(\Phi^\omega_{d,\varepsilon} ))
&\leq 
4
N_\varepsilon L_{d,\varepsilon}\max \{ \dim(\mathcal{D}(\Phi_{f_\varepsilon})), 
\dim(\mathcal{D}(\Phi_{g^d_\varepsilon}))
\}\\
&\leq 4N_\varepsilon K_\varepsilon cd^c\varepsilon^{-c}cd^c\varepsilon^{-c}
=4N_\varepsilon K_\varepsilon c^2d^{2c}\varepsilon^{-2c}
\end{split}\label{c17}
\end{align}
Next, \eqref{c16}, \eqref{c01d}, and the fact that $c\geq 3\mathfrak{d}$ prove for all
$d\in \N$, $\varepsilon\in (0,1)$ that
\begin{align}
c_{d,\varepsilon}=
3\max\!\left\{ d\mathfrak{d},
\supnorm{\mathcal{D}(\Phi_{f_\varepsilon})},
\supnorm{\mathcal{D}(\Phi_{g^d_\varepsilon})},
\supnorm{\mathcal{D}(\Phi_{\mu^d_\varepsilon})}
,
\supnorm{
\mathcal{D}(\Phi_{\sigma^d_\varepsilon,0})}\right\}
\leq cd^c \varepsilon^{-c}
.\label{c01b}
\end{align}
This and \eqref{c19a} imply for all
$d\in \N$, $\varepsilon\in (0,1)$, $\omega\in \Omega$ that
\begin{align}
\supnorm{\mathcal{D}(\Phi^\omega_{d,\varepsilon})}
\leq c_{d,\varepsilon}(3M_{N_\varepsilon})^{N_\varepsilon}
\leq cd^c\varepsilon^{-c}(3M_{N_\varepsilon})^{N_\varepsilon}.
\end{align}
This, the fact that 
$\forall\, \Phi\in\mathbf{N}\colon \mathcal{P}(\Phi)\leq 2\dim (\mathcal{D}(\Phi))\supnorm{\mathcal{D}(\Phi)}^2$,  \eqref{c17}, 
\eqref{c23},
\eqref{c21}, and the fact that
$\forall\,\delta\in (0,1)\colon C_\delta<\infty$ prove for all
$d\in \N$, $\varepsilon,\delta\in (0,1)$ that
\begin{align} \begin{split} 
\mathcal{P}(\Phi_{d,\varepsilon}^\omega)
&\leq 2\cdot 4N_\varepsilon K_\varepsilon c^2d^{2c}\varepsilon^{-2c}
\left(cd^c\varepsilon^{-c}(3M_{N_\varepsilon})^{N_\varepsilon}\right)^2\\
&= 8  K_\varepsilon c^4 d^{4c}\varepsilon^{-4c}
N_\varepsilon
(3M_{N_\varepsilon})^{2N_\varepsilon}\\
&=
8  K_\varepsilon c^4 d^{4c}\varepsilon^{-4c}
\varepsilon^{4+\delta}N_\varepsilon
(3M_{N_\varepsilon})^{2N_\varepsilon}
\varepsilon^{-(4+\delta)}
\\
&\leq 8\cdot 2\varepsilon^{-2}c^4d^{4c}\varepsilon^{-4c}
C_\delta \varepsilon^{-(4+\delta)}\\
&=16 C_\delta c^4d^{4c}\varepsilon^{-(6+\delta)-4c}<\infty.\end{split}\label{c24}
\end{align}
Next, for every $d\in \N$, $\epsilon\in (0,1)$ let \begin{align}\label{c25}
\Psi_{d,\epsilon}= \Phi^{\omega(d,\epsilon)}_{d,\varepsilon(d,\epsilon)}.
\end{align} Then
\eqref{c24} and \eqref{c20} show for all $d\in \N$, $\epsilon\in (0,1)$ that
\begin{align} \begin{split} 
\mathcal{P}(
\Psi_{d,\epsilon})=\mathcal{P}( \Phi^{\omega(d,\epsilon)}_{d,\varepsilon(d,\epsilon)})
&\leq 
16 C_\delta c^4d^{4c}(\varepsilon(d,\epsilon))^{-(6+\delta)-4c}\\
&=
16 C_\delta c^4d^{4c}\left(\frac{\epsilon}{3\kappa d^{\kappa}}\right)^{-(6+\delta)-4c}\\
&=16 C_\delta(3\kappa)^{(6+\delta)+4c} c^4d^{4c+\kappa((6+\delta)+4c)}\epsilon^{-(6+\delta)-4c}.
\end{split}\label{c27}
\end{align}
Furthermore, 
\eqref{c19a},
\eqref{c25},   and \eqref{c26} imply for all $d\in \N$, $\epsilon\in (0,1)$ that 
$
{U}_{N_{\varepsilon(d,\epsilon)},M_{N_{\varepsilon(d,\epsilon)}}}^{d,0,K_{\varepsilon(d,\epsilon)},\varepsilon(d,\epsilon)}(0,x, \omega(d,\epsilon))
=(\mathcal{R}(\Phi^{\omega(d,\epsilon)}_{d,\varepsilon(d,\epsilon)}))(x)=(\mathcal{R}(\Psi_{d,\epsilon}))(x)
$
and 
\begin{align}
\int_{[0,1]^d}
\left\lvert
(\mathcal{R}(\Psi_{d,\epsilon}))(x)
-
u^d(0,x)
\right\rvert^\mathfrak{p}
dx<\epsilon^\mathfrak{p}.
\end{align}
This, \eqref{c27}, and the fact that $\forall\,\delta\in (0,1)\colon C_\delta<\infty$ complete the proof of \cref{a08}.
\end{proof}
\begin{proof}
[Proof of \cref{a08b}]
The definitions of $\mathfrak{a}_0,\mathfrak{a}_1$, the fact that $a\in \{\mathfrak{a}_0,\mathfrak{a}_1\}$, and Lemmas~\ref{a10} and \ref{a10b} show that there exists
 ${\mathfrak{d}}\in \N$, $\mathfrak{n}_{1,{\mathfrak{d}}}=(1,{\mathfrak{d}},1)\in \mathbf{D}$ such that
\begin{align}
\mathrm{Id}_\R\in \mathcal{R}(\{\Phi\in \N\colon \mathcal{D}(\Phi)=\mathfrak{n}_{1,{\mathfrak{d}}}\}) .\label{t02}
\end{align}
Next, the definitions of $\mathfrak{a}_0,\mathfrak{a}_1$, the fact that $a\in \{\mathfrak{a}_0,\mathfrak{a}_1\}$, and Lemmas~\ref{p10} and \ref{p10a} prove that there exists
$\tilde{c}\in (0,\infty)$,
$(f_\varepsilon)_{\varepsilon\in (0,1)}\subseteq C(\R,\R)$ such that for all $\varepsilon\in (0,1) $, $x,y\in \R$
we have that 
\begin{align}
\lvert f_\varepsilon(x)-f_\varepsilon(y)\rvert\leq c\lvert x-y\rvert, \quad 
\lvert f_\varepsilon(x)-f(x)\rvert\leq \varepsilon(1+\lvert x\rvert^\beta),\label{t01}
\end{align}
 \begin{align}
 f_\varepsilon\in \mathcal{R}(\{\Phi\in \mathbf{N}\colon \dim(\mathcal{D}(\Phi ))=3
, \supnorm{\mathcal{D}(\Phi )}\leq \tilde{c}\varepsilon^{-\tilde{c}}
\}).\label{c28}
\end{align} Then \eqref{d05b} implies for all $\varepsilon\in (0,1)$ that \begin{align}
T\lvert f_\varepsilon(0)\rvert\leq T(\lvert f(0)\rvert+\varepsilon)\leq c+T.
\end{align} Furthermore,
\eqref{c01eb} proves that there exists $\bar{c}\in (0,\infty)$ such that for all $d\in \N$, $\varepsilon\in (0,1)$ that
\begin{align}
\max\!\left\{
\supnorm{\mathcal{D}(\Phi_{f_\varepsilon})},
\supnorm{\mathcal{D}(\Phi_{g^d_\varepsilon})},
\supnorm{\mathcal{D}(\Phi_{\mu^d_\varepsilon})}
,
\supnorm{
\mathcal{D}(\Phi_{\sigma^d_\varepsilon,0})}\right\}\leq \bar{c}d^{\bar{c}}\varepsilon^{-\bar{c}},\label{c01g}
\end{align}
\begin{align}
\max\!\left\{
\dim (
\mathcal{D}(\Phi_{f_\varepsilon})),
\dim (\mathcal{D}(\Phi_{g^d_\varepsilon})),
\dim (\mathcal{D}(\Phi_{\mu^d_\varepsilon}))
,
\dim (
\mathcal{D}(\Phi_{\sigma^d_\varepsilon,0}))\right\}\leq \bar{c}d^{\bar{c}}\varepsilon^{-\bar{c}}
.\label{c01f}
\end{align}
Combining \eqref{t02}--\eqref{c01f}, the assumptions of \cref{a08b}, and \cref{a08} (applied with $c$ replaced by a suitable large constant) we complete the proof of \cref{a08b}.
\end{proof}

\bibliographystyle{acm}
\bibliography{References}

\newcommand{\dummy}[1]{}
\begin{thebibliography}{10}

\bibitem{AJK+2023}
{\sc Ackermann, J., Jentzen, A., Kruse, T., Kuckuck, B., and Padgett, J.~L.}
\newblock {Deep neural networks with ReLU, leaky ReLU, and softplus activation
  provably overcome the curse of dimensionality for Kolmogorov partial
  differential equations with Lipschitz nonlinearities in the $L^p$-sense}.
\newblock {\em arXiv:2309.13722\/} (2023).

\bibitem{AJKP2024}
{\sc Ackermann, J., Jentzen, A., Kuckuck, B., and Padgett, J.~L.}
\newblock {Deep neural networks with ReLU, leaky ReLU, and softplus activation
  provably overcome the curse of dimensionality for space-time solutions of
  semilinear partial differential equations}.
\newblock {\em arXiv:2406.10876\/} (2024).

\bibitem{al2022extensions}
{\sc Al-Aradi, A., Correia, A., Jardim, G., de~Freitas~Naiff, D., and Saporito,
  Y.}
\newblock Extensions of the deep {G}alerkin method.
\newblock {\em Applied Mathematics and Computation 430\/} (2022), 127287.

\bibitem{gnoatto2022deep}
{\sc Andersson, K., Gnoatto, A., Patacca, M., and Picarelli, A.}
\newblock {A Deep Solver for BSDEs with Jumps}.
\newblock {\em SIAM Journal on Financial Mathematics 16}, 3 (2025), 875--911.

\bibitem{beck2020deep}
{\sc Beck, C., Becker, S., Cheridito, P., Jentzen, A., and Neufeld, A.}
\newblock Deep learning based numerical approximation algorithms for stochastic
  partial differential equations and high-dimensional nonlinear filtering
  problems.
\newblock {\em arXiv:2012.01194\/} (2020).

\bibitem{beck2021deep}
{\sc Beck, C., Becker, S., Cheridito, P., Jentzen, A., and Neufeld, A.}
\newblock Deep splitting method for parabolic {PDE}s.
\newblock {\em SIAM Journal on Scientific Computing 43}, 5 (2021),
  A3135--A3154.

\bibitem{beck2019machine}
{\sc Beck, C., E, W., and Jentzen, A.}
\newblock Machine learning approximation algorithms for high-dimensional fully
  nonlinear partial differential equations and second-order backward stochastic
  differential equations.
\newblock {\em Journal of Nonlinear Science 29\/} (2019), 1563--1619.

\bibitem{beck2019existence}
{\sc Beck, C., Gonon, L., Hutzenthaler, M., and Jentzen, A.}
\newblock On existence and uniqueness properties for solutions of stochastic
  fixed point equations.
\newblock {\em Discrete and Continuous Dynamical Systems - Series B 26}, 9
  (2019), 4927--4962.

\bibitem{beck2020overcomingElliptic}
{\sc Beck, C., Gonon, L., and Jentzen, A.}
\newblock Overcoming the curse of dimensionality in the numerical approximation
  of high-dimensional semilinear elliptic partial differential equations.
\newblock {\em Partial Differential Equations and Applications 5}, 31 (2024).

\bibitem{beck2020overcoming}
{\sc Beck, C., Hornung, F., Hutzenthaler, M., Jentzen, A., and Kruse, T.}
\newblock Overcoming the curse of dimensionality in the numerical approximation
  of {A}llen--{C}ahn partial differential equations via truncated full-history
  recursive multilevel {P}icard approximations.
\newblock {\em Journal of Numerical Mathematics 28}, 4 (2020), 197--222.

\bibitem{beck2021nonlinear}
{\sc Beck, C., Hutzenthaler, M., and Jentzen, A.}
\newblock On nonlinear {F}eynman--{K}ac formulas for viscosity solutions of
  semilinear parabolic partial differential equations.
\newblock {\em Stochastics and Dynamics 21}, 08 (2021), 2150048.

\bibitem{beck2020overview}
{\sc Beck, C., Hutzenthaler, M., Jentzen, A., and Kuckuck, B.}
\newblock An overview on deep learning-based approximation methods for partial
  differential equations.
\newblock {\em Discrete and Continuous Dynamical Systems -- Series B 28}, 6
  (2023).

\bibitem{becker2020numerical}
{\sc Becker, S., Braunwarth, R., Hutzenthaler, M., Jentzen, A., and von
  Wurstemberger, P.}
\newblock Numerical simulations for full history recursive multilevel {P}icard
  approximations for systems of high-dimensional partial differential
  equations.
\newblock {\em Communications in Computational Physics 28}, 5 (2020),
  2109--2138.

\bibitem{berner2020numerically}
{\sc Berner, J., Dablander, M., and Grohs, P.}
\newblock Numerically solving parametric families of high-dimensional
  {K}olmogorov partial differential equations via deep learning.
\newblock {\em Advances in Neural Information Processing Systems 33\/} (2020),
  16615--16627.

\bibitem{BGJ2020}
{\sc Berner, J., Grohs, P., and Jentzen, A.}
\newblock {Analysis of the Generalization Error: Empirical Risk Minimization
  over Deep Artificial Neural Networks Overcomes the Curse of Dimensionality in
  the Numerical Approximation of Black--Scholes Partial Differential
  Equations}.
\newblock {\em SIAM Journal on Mathematics of Data Science 2}, 3 (2020),
  631--657.

\bibitem{castro2022deep}
{\sc Castro, J.}
\newblock Deep learning schemes for parabolic nonlocal integro-differential
  equations.
\newblock {\em Partial Differential Equations and Applications 3}, 77 (2022).

\bibitem{CR2023}
{\sc Cheridito, P., and Rossmannek, F.}
\newblock {Efficient Sobolev approximation of linear parabolic PDEs in high
  dimensions}.
\newblock {\em arXiv:2306.16811\/} (2023).

\bibitem{CHW2022}
{\sc Cioica-Licht, P.~A., Hutzenthaler, M., and Werner, P.~T.}
\newblock Deep neural networks overcome the curse of dimensionality in the
  numerical approximation of semilinear partial differential equations.
\newblock {\em arXiv:2205.14398v1\/} (2022).

\bibitem{CoxHutzenthalerJentzen2014}
{\sc Cox, S.~G., Hutzenthaler, M., and Jentzen, A.}
\newblock Local {L}ipschitz continuity in the initial value and strong
  completeness for nonlinear stochastic differential equations.
\newblock {\em Memoirs of the American Mathematical Society 296\/} (2024).

\bibitem{ew2017deep}
{\sc E, W., Han, J., and Jentzen, A.}
\newblock Deep learning-based numerical methods for high-dimensional parabolic
  partial differential equations and backward stochastic differential
  equations.
\newblock {\em Communications in Mathematics and Statistics 5}, 4 (2017),
  349--380.

\bibitem{weinan2021algorithms}
{\sc E, W., Han, J., and Jentzen, A.}
\newblock Algorithms for solving high dimensional {PDE}s: from nonlinear
  {M}onte {C}arlo to machine learning.
\newblock {\em Nonlinearity 35}, 1 (2021), 278.

\bibitem{ew2018deep}
{\sc E, W., and Yu, B.}
\newblock The deep {R}itz method: A deep learning-based numerical algorithm for
  solving variational problems.
\newblock {\em Commun Math Stat 6}, 1 (2018), 1--12.

\bibitem{EGJS2022}
{\sc Elbr{\"a}chter, D., Grohs, P., Jentzen, A., and Schwab, C.}
\newblock {DNN Expression Rate Analysis of High-dimensional PDEs: Application
  to Option Pricing}.
\newblock {\em Constructive Approximation 55\/} (2022), 3--71.

\bibitem{frey2022deep}
{\sc Frey, R., and K{\"o}ck, V.}
\newblock Deep neural network algorithms for parabolic {PIDE}s and applications
  in insurance mathematics.
\newblock In {\em Methods and Applications in Fluorescence}. Springer, 2022,
  pp.~272--277.

\bibitem{frey2022convergence}
{\sc Frey, R., and K\"{o}ck, V.}
\newblock Convergence analysis of the deep splitting scheme: The case of
  partial integro-differential equations and the associated forward backward
  sdes with jumps.
\newblock {\em SIAM Journal on Scientific Computing 47}, 1 (2025), A527--A552.

\bibitem{GPW2022}
{\sc Germain, M., Pham, H., and Warin, X.}
\newblock Approximation error analysis of some deep backward schemes for
  nonlinear {PDE}s.
\newblock {\em SIAM Journal on Scientific Computing 44}, 1 (2022), A28--A56.

\bibitem{giles2019generalised}
{\sc Giles, M.~B., Jentzen, A., and Welti, T.}
\newblock {Generalised multilevel Picard approximations}.
\newblock {\em arXiv:1911.03188\/} (2019).

\bibitem{gonon2023random}
{\sc Gonon, L.}
\newblock Random feature neural networks learn {B}lack-{S}choles type {PDE}s
  without curse of dimensionality.
\newblock {\em Journal of Machine Learning Research 24}, 189 (2023), 1--51.

\bibitem{gonon2021deep}
{\sc Gonon, L., and Schwab, C.}
\newblock Deep {R}e{LU} network expression rates for option prices in
  high-dimensional, exponential {L}{\'e}vy models.
\newblock {\em Finance and Stochastics 25}, 4 (2021), 615--657.

\bibitem{gonon2023deep}
{\sc Gonon, L., and Schwab, C.}
\newblock Deep {R}e{LU} neural networks overcome the curse of dimensionality
  for partial integrodifferential equations.
\newblock {\em Analysis and Applications 21}, 01 (2023), 1--47.

\bibitem{GHJVW2023}
{\sc Grohs, P., Hornung, F., Jentzen, A., and Von~Wurstemberger, P.}
\newblock A proof that artificial neural networks overcome the curse of
  dimensionality in the numerical approximation of {B}lack--{S}choles partial
  differential equations.
\newblock {\em Memoirs of the American Mathematical Society 284\/} (2023).

\bibitem{han2018solving}
{\sc Han, J., Jentzen, A., and E, W.}
\newblock Solving high-dimensional partial differential equations using deep
  learning.
\newblock {\em Proceedings of the National Academy of Sciences 115}, 34 (2018),
  8505--8510.

\bibitem{han2020convergence}
{\sc Han, J., and Long, J.}
\newblock Convergence of the deep {BSDE} method for coupled {FBSDE}s.
\newblock {\em Probability, Uncertainty and Quantitative Risk 5\/} (2020),
  1--33.

\bibitem{han2019solving}
{\sc Han, J., Zhang, L., and E, W.}
\newblock Solving many-electron {S}chr{\"o}dinger equation using deep neural
  networks.
\newblock {\em Journal of Computational Physics 399\/} (2019), 108929.

\bibitem{hure2020deep}
{\sc Hur{\'e}, C., Pham, H., and Warin, X.}
\newblock {Deep backward schemes for high-dimensional nonlinear PDEs}.
\newblock {\em Mathematics of Computation 89}, 324 (2020), 1547--1579.

\bibitem{HJ20}
{\sc Hutzenthaler, M., and Jentzen, A.}
\newblock {On a perturbation theory and on strong convergence rates for
  stochastic ordinary and partial differential equations with non-globally
  monotone coefficients}.
\newblock {\em {Annals of Probability} 48}, 1 (2020), 53--93.

\bibitem{hutzenthaler2019multilevel}
{\sc Hutzenthaler, M., Jentzen, A., and Kruse, T.}
\newblock {On multilevel Picard numerical approximations for high-dimensional
  nonlinear parabolic partial differential equations and high-dimensional
  nonlinear backward stochastic differential equations}.
\newblock {\em Journal of Scientific Computing 79}, 3 (2019), 1534--1571.

\bibitem{hutzenthaler2021multilevel}
{\sc Hutzenthaler, M., Jentzen, A., and Kruse, T.}
\newblock {Multilevel Picard iterations for solving smooth semilinear parabolic
  heat equations}.
\newblock {\em Partial Differential Equations and Applications 2}, 6 (2021),
  1--31.

\bibitem{HJK2022}
{\sc Hutzenthaler, M., Jentzen, A., and Kruse, T.}
\newblock Overcoming the curse of dimensionality in the numerical approximation
  of parabolic partial differential equations with gradient-dependent
  nonlinearities.
\newblock {\em Foundations of Computational Mathematics 22}, 4 (2022),
  905--966.

\bibitem{HJKNW2020}
{\sc Hutzenthaler, M., Jentzen, A., Kruse, T., Nguyen, T., and von
  Wurstemberger, P.}
\newblock Overcoming the curse of dimensionality in the numerical approximation
  of semilinear parabolic partial differential equations.
\newblock {\em Proceedings of the Royal Society A 476}, 2244 (2020), 20190630.

\bibitem{HJKN2020}
{\sc Hutzenthaler, M., Jentzen, A., Kruse, T., and Nguyen, T.~A.}
\newblock {Multilevel {P}icard approximations for high-dimensional semilinear
  second-order {PDE}s with Lipschitz nonlinearities}.
\newblock {\em arXiv:2009.02484\/} (2020).

\bibitem{hutzenthaler2020proof}
{\sc Hutzenthaler, M., Jentzen, A., Kruse, T., and Nguyen, T.~A.}
\newblock A proof that rectified deep neural networks overcome the curse of
  dimensionality in the numerical approximation of semilinear heat equations.
\newblock {\em SN partial differential equations and applications 1\/} (2020),
  1--34.

\bibitem{HJKP2021}
{\sc Hutzenthaler, M., Jentzen, A., Kuckuck, B., and Padgett, J.~L.}
\newblock {Strong $L^p$-error analysis of nonlinear Monte Carlo approximations
  for high-dimensional semilinear partial differential equations}.
\newblock {\em arXiv:2110.08297\/} (2021).

\bibitem{HJvW2020}
{\sc Hutzenthaler, M., Jentzen, A., and von Wurstemberger, P.}
\newblock Overcoming the curse of dimensionality in the approximative pricing
  of financial derivatives with default risks.
\newblock {\em Electronic Journal of Probability 25\/} (2020), 1--73.

\bibitem{HK2020}
{\sc Hutzenthaler, M., and Kruse, T.}
\newblock Multilevel {P}icard approximations of high-dimensional semilinear
  parabolic differential equations with gradient-dependent nonlinearities.
\newblock {\em SIAM Journal on Numerical Analysis 58}, 2 (2020), 929--961.

\bibitem{hutzenthaler2022multilevel}
{\sc Hutzenthaler, M., Kruse, T., and Nguyen, T.~A.}
\newblock Multilevel {P}icard approximations for {M}c{K}ean-{V}lasov stochastic
  differential equations.
\newblock {\em Journal of Mathematical Analysis and Applications 507}, 1
  (2022), 125761.

\bibitem{HN2022b}
{\sc Hutzenthaler, M., and Nguyen, T.~A.}
\newblock {Multilevel Picard approximations for high-dimensional decoupled
  forward-backward stochastic differential equations}.
\newblock {\em arXiv:2204.08511\/} (2022).

\bibitem{HN2022a}
{\sc Hutzenthaler, M., and Nguyen, T.~A.}
\newblock {Multilevel Picard approximations of high-dimensional semilinear
  partial differential equations with locally monotone coefficient functions}.
\newblock {\em Applied Numerical Mathematics 181\/} (2022), 151--175.

\bibitem{ito2021neural}
{\sc Ito, K., Reisinger, C., and Zhang, Y.}
\newblock A neural network-based policy iteration algorithm with global
  {$H^2$}-superlinear convergence for stochastic games on domains.
\newblock {\em Foundations of Computational Mathematics 21}, 2 (2021),
  331--374.

\bibitem{jacquier2023deep}
{\sc Jacquier, A., and Oumgari, M.}
\newblock Deep curve-dependent {PDE}s for affine rough volatility.
\newblock {\em SIAM Journal on Financial Mathematics 14}, 2 (2023), 353--382.

\bibitem{jacquier2023random}
{\sc Jacquier, A., and Zuric, Z.}
\newblock Random neural networks for rough volatility.
\newblock {\em Applied Mathematics and Optimization 93}, 48 (2026).

\bibitem{JKvW2023}
{\sc Jentzen, A., Kuckuck, B., and von Wurstemberger, P.}
\newblock Mathematical introduction to deep learning: Methods, implementations,
  and theory.
\newblock {\em arXiv:2310.20360v1\/} (2023).

\bibitem{JSW2021}
{\sc Jentzen, A., Salimova, D., and Welti, T.}
\newblock A proof that deep artificial neural networks overcome the curse of
  dimensionality in the numerical approximation of {K}olmogorov partial
  differential equations with constant diffusion and nonlinear drift
  coefficients.
\newblock {\em Communications in Mathematical Sciences 19}, 5 (2021),
  1167--1205.

\bibitem{lu2021deepxde}
{\sc Lu, L., Meng, X., Mao, Z., and Karniadakis, G.~E.}
\newblock Deep{XDE}: A deep learning library for solving differential
  equations.
\newblock {\em SIAM review 63}, 1 (2021), 208--228.

\bibitem{neufeld2024rectified}
{\sc Neufeld, A., and Nguyen, T.~A.}
\newblock Rectified deep neural networks overcome the curse of dimensionality
  in the numerical approximation of gradient-dependent semilinear heat
  equations.
\newblock {\em Communications in Mathematical Sciences 23}, 4 (2025), 883--912.

\bibitem{NN2025}
{\sc Neufeld, A., and Nguyen, T.~A.}
\newblock {Rectified deep neural networks overcome the curse of dimensionality
  when approximating solutions of McKean--Vlasov stochastic differential
  equations}.
\newblock {\em Journal of Mathematical Analysis and Applications 541}, 1
  (2025), 128661.

\bibitem{NNW2023}
{\sc Neufeld, A., Nguyen, T.~A., and Wu, S.}
\newblock Deep {ReLU} neural networks overcome the curse of dimensionality when
  approximating semilinear partial integro-differential equations.
\newblock {\em Analysis and Applications 23}, 07 (2025), 1227--1278.

\bibitem{neufeld2024multilevel}
{\sc Neufeld, A., Nguyen, T.~A., and Wu, S.}
\newblock Multilevel {Picard} approximations overcome the curse of
  dimensionality in the numerical approximation of general semilinear {PDEs}
  with gradient-dependent nonlinearities.
\newblock {\em Journal of Complexity 90\/} (2025), 101946.

\bibitem{NS2023}
{\sc Neufeld, A., and Schmocker, P.}
\newblock {Universal approximation property of Banach space-valued random
  feature models including random neural networks}.
\newblock {\em arXiv:2312.08410\/} (2026).

\bibitem{NSW2024}
{\sc Neufeld, A., Schmocker, P., and Wu, S.}
\newblock {Full error analysis of the random deep splitting method for
  nonlinear parabolic PDEs and PIDEs}.
\newblock {\em Communications in Nonlinear Science and Numerical Simulation
  143\/} (2025), 108556.

\bibitem{NW2023}
{\sc Neufeld, A., and Wu, S.}
\newblock {Multilevel Picard algorithm for general semilinear parabolic PDEs
  with gradient-dependent nonlinearities}.
\newblock {\em Journal of Numerical Mathematics\/} (2025).

\bibitem{NW2022}
{\sc Neufeld, A., and Wu, S.}
\newblock {Multilevel Picard approximation algorithm for semilinear partial
  integro-differential equations and its complexity analysis}.
\newblock {\em Stochastics and Partial Differential Equations: Analysis and
  Computations 13\/} (2025), 1220--1278.

\bibitem{nguwi2022deep}
{\sc Nguwi, J.~Y., Penent, G., and Privault, N.}
\newblock A deep branching solver for fully nonlinear partial differential
  equations.
\newblock {\em arXiv:2203.03234\/} (2022).

\bibitem{nguwi2022numerical}
{\sc Nguwi, J.~Y., Penent, G., and Privault, N.}
\newblock Numerical solution of the incompressible {N}avier-{S}tokes equation
  by a deep branching algorithm.
\newblock {\em Communications in Computational Physics 34}, 2 (2023).

\bibitem{nguwi2023deep}
{\sc Nguwi, J.~Y., and Privault, N.}
\newblock A deep learning approach to the probabilistic numerical solution of
  path-dependent partial differential equations.
\newblock {\em Partial Differential Equations and Applications 4}, 4 (2023),
  37.

\bibitem{NGUYEN2025116834}
{\sc Nguyen, T.~A.}
\newblock {Multilevel Picard approximations overcome the curse of
  dimensionality when approximating semilinear heat equations with
  gradient-dependent nonlinearities in $L^p$-sense}.
\newblock {\em Journal of Computational and Applied Mathematics 473\/} (2025),
  116834.

\bibitem{raissi2019physics}
{\sc Raissi, M., Perdikaris, P., and Karniadakis, G.~E.}
\newblock Physics-informed neural networks: A deep learning framework for
  solving forward and inverse problems involving nonlinear partial differential
  equations.
\newblock {\em Journal of Computational physics 378\/} (2019), 686--707.

\bibitem{reisinger2020rectified}
{\sc Reisinger, C., and Zhang, Y.}
\newblock Rectified deep neural networks overcome the curse of dimensionality
  for nonsmooth value functions in zero-sum games of nonlinear stiff systems.
\newblock {\em Analysis and Applications 18}, 06 (2020), 951--999.

\bibitem{Rio2009}
{\sc Rio, E.}
\newblock Moment inequalities for sums of dependent random variables under
  projective conditions.
\newblock {\em Journal of Theoretical Probability 22\/} (2009), 146--163.

\bibitem{sirignano2018dgm}
{\sc Sirignano, J., and Spiliopoulos, K.}
\newblock {DGM}: A deep learning algorithm for solving partial differential
  equations.
\newblock {\em Journal of computational physics 375\/} (2018), 1339--1364.

\bibitem{zhang2020learning}
{\sc Zhang, D., Guo, L., and Karniadakis, G.~E.}
\newblock Learning in modal space: Solving time-dependent stochastic {PDEs}
  using physics-informed neural networks.
\newblock {\em SIAM Journal on Scientific Computing 42}, 2 (2020), A639--A665.

\end{thebibliography}
\appendix
\section{Code for Example~\ref{e02a}}
\begin{listing}\label{l03}
Here comes the code for Example~\ref{e02a}.
The code was written in
Julia (see \url{https://julialang.org}). We used a laptop with 16GB RAM, 12th Gen Intel Core i5-1240P
x 16, Operating System: Ubuntu 22.04.4 LTS 64 bit.
Note that the global variable \texttt{count} is introduced to count
the number of real-valued random variables needed for the MLP
approximations. 
The following code should be saved
under the name \texttt{example.jl}. To run the code we type \texttt{julia example.jl}. The outputs will be contained in \texttt{example.csv}  and \texttt{example.png}. 
Note that before
running the code we may need to first install the packages used here.

\begin{lstlisting}
using Plots,LaTeXStrings,
DataFrames,CSV, LinearAlgebra,Distributions,Random

function Y(t,s,x;d,mu,sigma,N)
	global count
	y=x; cur=t; h=T/N;
	nex=ceil(t/h)*h
	if (nex==cur)
		nex= min(cur+h,s)
    end
	if (nex==s)
		count=count+d
   		y= y+ mu(d,y)*(nex-cur)\
            + sigma(d,y)*rand(Normal(),d)*sqrt(nex-cur)
		return y
    end
	while (nex<s)
		count=count+d
   		y= y+ mu(d,y)*(nex-cur)
            + sigma(d,y)*rand(Normal(),d)*sqrt(nex-cur)
		cur=nex
		nex= min(cur+h,s)
    end
	return y
end

function Uref(t,x;d,mu,sigma,f,g,N,n,m)
	if (n==0)
		return 0.0
    end
	s=0.0;
	for i in 1:m^n
        s=s+ g(d,Y(t,T,x;d=d,mu=mu,sigma=sigma,N=N))/(m^n)
    end
	for l in 0:(n-1)
		for i in 1:m^(n-l)
			r=t+(T-t)*rand(Uniform(0,1))
            y=Y(t,r,x,d=d,mu=mu,sigma=sigma,N=N)
			if (l>=1)
                s=s+ (f(Uref(r,y,d=d,mu=mu,
                sigma=sigma,f=f,g=g,N=N,n=l,m=m))
                    -f(Uref(r,y,d=d,mu=mu,
                    sigma=sigma,f=f,g=g,N=N,n=l-1,m=m)))/m^(n-l)
			else
                s=s+ f(Uref(r,y,d=d,mu=mu,
                sigma=sigma,f=f,g=g,N=N,n=l,m=m))/m^(n-l)
            end
        end
    end
	return s;
end

function U(t,x;d,mu,sigma,f,g,n,m)
	if (n==0)
		return 0.0
    end
	s=0.0;
    for l in 0:(n-1)
    	for i in 1:m^n
            if (l>=1)
                s=s+ (g(d,Y(t,T,x,d=d,mu=mu,sigma=sigma,N=m^l))                   
                    -g(d,Y(t,T,x,d=d,mu=mu,sigma=sigma,N=m^(l-1))) )
                    /(m^n);
            else
                s=s+ g(d,Y(t,T,x,d=d,mu=mu,sigma=sigma,N=m^l))/(m^n);
            end
        end
    end
	for l in 0:(n-1)
		for i in 1:(m^(n-l))
            r=t+(T-t)*rand(Uniform(0,1))
            y=Y(t,r,x,d=d,mu=mu,sigma=sigma,N=m^m)
            if (l>=1)
                s=s+ (f(U(r,y,d=d,mu=mu,
                sigma=sigma,f=f,g=g,n=l,m=m))
                    -f(U(r,y,d=d,mu=mu,
                    sigma=sigma,f=f,g=g,n=l-1,m=m)))/m^(n-l)
			else
                s=s+ f(U(r,y,d=d,mu=mu,sigma=sigma,f=f,g=g,n=l,m=m))
                /m^(n-l)
            end
        end
    end
	return s;
end

function M(n)    
    return  floor(exp(sqrt(log(n))))
end

function mu(d,x)
	global count
	count=count+1
	return cos(norm(d,x))*x
end
function norm(d,x)
	s=0.0; for i in 1:d s=s+x[i]^2 end; return sqrt(s)
end
function sigma(d,x)
	global count
	count=count+1
	return  Matrix(1.0I, d, d) 
end
function f(y)
	global count;
	count=count+1;
	return  sin(y);
end
function g(d,x)
	global count;
	count=count+1;
	return (2+2/5*norm(d,x)^2)^(-1);
end


T=1.0;
d=100;
count=0;
n_max=4;
diff=zeros(n_max);
effort=zeros(n_max);
runtime=zeros(n_max);
u1=Uref(0.0,zeros(d);d=d,mu=mu,sigma=sigma,f=f,g=g,N=10000,n=4,m=4)
U(0.0,zeros(d),d=d,mu=mu,sigma=sigma,f=f,g=g,n=0,m=0)

for n in 1:n_max
    m=Int(M(n));
    global count;count=0
    ti=time()
    s=0.0
    for i in 1:5
        u2= U(0.0,zeros(d),d=d,mu=mu,sigma=sigma,f=f,g=g,n=n,m=m);
        s=s+abs(u1-u2)^4;
    end    
    runtime[n]=time()-ti
    diff[n]=s^(1/4);
    effort[n]=count
end

df=DataFrame( error=diff,runtime=runtime,effort=effort)
CSV.write("example.csv", df)
println(df)
plot(effort,[diff effort.^(-1/2) effort.^(-1/4)], 
xaxis=:log10, yaxis=:log10, 
    label=["error" "line "*L"y=x^{-1/2}" "line "*L"y=x^{-1/4}"], 
    ls=[:solid :dash :dot], 
    xlabel="computational effort", ylabel="error",
    title=L"d=%$d")
savefig("example.png") 

\end{lstlisting}
\end{listing}
\end{document}